\newtheorem{introthm}{Theorem}
\newtheorem{theorem}{Theorem}[section]
\newtheorem{proposition}[theorem]{Proposition}
\newtheorem{lemma}[theorem]{Lemma}
\theoremstyle{definition}
\newtheorem{notation}[theorem]{Notation}
\theoremstyle{remark}
\newtheorem{remark}[theorem]{Remark}
\newtheorem{example}[theorem]{Example}
\numberwithin{equation}{section}
\newcommand{\R}{\mathbb{R}}
\newcommand\C{\mathbb{C}}
\DeclareMathOperator{\Tr}{Tr}
\DeclareMathOperator{\id}{id}
\DeclareMathOperator{\Ran}{Ran}
\DeclareMathOperator{\Span}{Span}
\DeclareMathOperator{\diag}{diag}
\DeclarePairedDelimiter{\ip}{\langle}{\rangle}
\DeclarePairedDelimiter{\ang}{\langle}{\rangle}
\DeclarePairedDelimiter{\norm}{\lVert}{\rVert}
\DeclarePairedDelimiter{\paren}{(}{)}
\DeclarePairedDelimiter{\abs}{|}{|}
\DeclarePairedDelimiter{\set}{\{}{\}}
			\newcommand{\gp}{\mathop{\begin{tikzpicture}[baseline]
						\node[circle, fill=cyan, draw=black, inner sep=0pt, minimum size=3pt] (mid) at (0, 2.5pt) {};
						\foreach \x in {0,60,...,300} {
							\node[circle, fill=cyan, draw, inner sep=0pt, minimum size=3pt] (\x) at ($(mid)!6pt!(mid.\x)$) {};
							\draw (\x) -- (mid) ;
						}
			\end{tikzpicture}}}
			\def\semicolon{;}
			\def\applytolist#1{
				\expandafter\def\csname multi#1\endcsname##1{
					\def\multiack{##1}\ifx\multiack\semicolon
					\def\next{\relax}
					\else
					\csname #1\endcsname{##1}
					\def\next{\csname multi#1\endcsname}
					\fi
					\next}
				\csname multi#1\endcsname}
			\def\calc#1{\expandafter\def\csname c#1\endcsname{{\mathcal #1}}}
			\def\bbc#1{\expandafter\def\csname b#1\endcsname{{\mathbb #1}}}
			\def\fc#1{\expandafter\def\csname f#1\endcsname{{\mathfrak #1}}}
			\def\rc#1{\expandafter\def\csname r#1\endcsname{{\mathrm #1}}}
			\def\ttc#1{\expandafter\def\csname t#1\endcsname{{\mathtt #1}}}
			\def\moverlay{\mathpalette\mov@rlay}
			\def\mov@rlay#1#2{\leavevmode\vtop{%
					\baselineskip\z@skip \lineskiplimit-\maxdimen
					\ialign{\hfil$#1##$\hfil\cr#2\crcr}}}
			\title[Atoms of graph products]{The atoms of graph product von Neumann algebras}
\begin{document}
				
				\author{Ian Charlesworth$^\circ$}
				\address{$^\circ$School of Mathematics, Cardiff University, United Kingdom}
				\email{$^\circ$\href{mailto:charlesworthi@cardiff.ac.uk}{charlesworthi@cardiff.ac.uk}}
				
				\author{David Jekel$^\bullet$}
				\address{$^\bullet$Department of Mathematical Sciences, University of Copenhagen, Denmark}
				\email{$^\bullet$\href{mailto:daj@math.ku.dk}{daj@math.ku.dk}}
				%	\urladdr{https://davidjekel.com}
				
				\subjclass[2020]{46L10, 46L54, 05C25, 05C31}
				\keywords{graph product, $\varepsilon$-independence, von Neumann algebra, type I factor, atom}
				
				\begin{abstract}
					We completely classify the atomic summands in a graph product $(M,\varphi) = \gp_{v \in \cG} (M_v,\varphi_v)$ of von Neumann algebras with faithful normal states.  Each type I factor summand $(N,\psi)$ is a tensor product of type I factor summands $(N_v,\psi_v)$ in the individual algebras.  The existence of such a summand and its weight in the direct sum can be determined from the $(N_v,\psi_v)$'s using explicit polynomials associated to the graph.
				\end{abstract}
				
				\maketitle

				%	\section{List of notation}
				
				% \begin{itemize}
					%    \item $\cG = (\cV ,\cE )$ graph {\color{red} I have converted instances of $G, V, E, K$ to $\cG, \cV, \cE, \cK$. I don't \emph{think} I accidentally hit any open sets $V$ or conditional expectations $E$ but there were a lot to check; also I searched for things to change via regex so there's a chance I missed some which should have been done.}
					% 	\item Graph assumed finite (except possibly in some remark)
					% 	\item $v$ for vertex, $w$ for word
					% 	\item Word $w = w_1 \dots w_\ell$ rather than $w(1) \dots w(\ell)$.
					% 	\item $f$ power series of reduced words up to equivalence
					% 	\item $\tilde{f}$ power of all words up to equivalence
					% 	\item $\fK_{\cG}$ sum over cliques.  $\cG'$ for induced subgraph.
					% 	\item summation over cliques:
					% 	\[
					% 	%\sum_{\substack{\cK \subseteq \cG\\\cK \text{ clique}}}
					% 	\sum_{\substack{\cK \subseteq \cG\\\text{ clique}}}
					% 	%\sum_{\cK\text{ clique}}
					% 	%\sum_{\cK \in \mathbf{Cliq}(\cG)}
					% 	\]
					% 	\item $X_v$ for formal variables in the power series
					% 	\item $x_v$ for real numbers plugged into power series.
					% 	\item projections $p_{\vec{\imath}}$, $q_{\vec{\imath}}$.
					% 	\item $(M,\varphi)$ von Neumann algebras, $(N,\psi)$ direct summand with weight $\alpha$.
					
					%    \item In \S4 I have introduced the notation $[\vec n] = \prod_{v \in \cV} [n(v)]$ because it makes a lot of equations better and removes several instances where $v$ was overloaded.
					% \end{itemize}
				
				\section{Introduction}
				
				\emph{Graph products of groups} were introduced by Green \cite{Green1990}: given a graph $\cG$ and groups $\Gamma_v$ for each vertex $v$, the graph product $\Gamma = \gp_{v \in \cG} \Gamma_v$ is the free product of the $\Gamma_v$'s modulo the relations that $\Gamma_v$ and $\Gamma_{v'}$ commute when $v$ is adjacent to $v'$.  Analogous constructions for operator algebras have been introduced several times, at first from a probabilistic viewpoint as providing a mixture of classical and free independence \cite{Mlot2004,SpWy2016}, later as providing the natural analogue of Green's construction in the setting of $\mathrm{C}^*$ and von Neumann algebras \cite{CaFi2017}, and again as an asymptotic description of certain random many body systems \cite{MorLau2019}. 
				As we will see below, the study of the combinatorics of words needed for graph products actually preceded the introduction of graph products of groups \cite{CF1969}.

				In this paper, we focus on the setting of finite graphs, and our goal is to completely describe the finite-dimensional direct summands in graph product von Neumann algebras, motivated by various results in the case of free products (which correspond to graphs with no edges).  Dykema \cite{Dykema1993freedimension} gave a precise description of free products of amenable tracial von Neumann algebras as a direct sum of a $\mathrm{II}_1$ factor with (sometimes) various finite-dimensional pieces.  After many partial results were proved, Ueda \cite{UedaTypeIIIfreeproduct} finally gave a complete classification of when a free product of general von Neumann algebras is a factor, when it is diffuse, and what the types of its summands are.    The present authors and their collaborators also classified when a graph product is a factor under an additional assumption that each algebra has a state-zero unitary in the centralizer of the state \cite{tesseract}, but this is precisely a situation when a free product will always be diffuse.
				In the other extreme, Raum and Skalski \cite{RaumSkalski2023} classified the atoms of the von Neumann algebras of multiparameter Hecke algebras, which are graph products of two-dimensional algebras (see \cite[Corollary 3.4]{Caspers2020}), generalizing earlier works on factoriality \cite{Garncarek2016,CKL2021}.
				
				An illustrative subproblem is to determine when the intersection of graph-independent nonzero projections is nonzero (this is closely related to the existence of Hecke eigenvectors in the Hecke algebra setting of \cite{RaumSkalski2023}).  %Consider a graph product $(M,\varphi) = \gp_{v \in \cG} (M_v,\varphi_v)$ (for definition, see \S \ref{sec: preliminaries}), let $p_v$ be a projection in $M_v$, and let $p = \bigwedge_{v \in \cV} p_v$.
				%In the case of classical independence -- i.e., a complete graph -- we have $p = \prod_{v \in \cV} p_v$ and $\varphi(p) = \prod_{v \in \cV} \varphi_v(p_v)$ which is always nonzero.
				%In the case of free independence -- a graph with no edges -- $p$ is nonzero if and only if $1 - \sum_{v \in \cV} (1 - \varphi_v(p_v)) > 0$, in which case $\varphi(p)$ is precisely this quantity.
				%The graph product setting must ultimately include both of these cases.
				
				\begin{introthm} \label{thm: intersection of projections}
					Let $\cG = (\cV ,\cE )$ be a finite graph, and let $(M,\varphi) = \gp_{v \in \cG} (M_v,\varphi_v)$ be a graph product of von Neumann algebras with faithful normal states (see \S \ref{sec: preliminaries} for definition).  For each $v$, let $p_v$ be a projection in $M_v$.  For a graph $\cG$, define
					\begin{equation} \label{eq: main polynomials}
						\fK_{\cG}((x_v)_{v \in \cV}) = \sum_{\substack{\cK \subseteq \cG\\\text{clique}}} (-1)^{|\cK|} \prod_{v \in \cK} x_v,
					\end{equation}
					where the sum ranges over all cliques $\cK$ in the graph $\cG$; by convention $\cK = \varnothing$ is a clique and the corresponding term in the sum is $1$ (the empty product).  Then
					\[
					\bigwedge_{v \in \cV} p_v \neq 0 \iff \forall \cV' \subseteq \cV, \quad \fK_{\cG'}((1-\varphi_v(p_v))_{v \in \cV'}) > 0, 
					\]
					where $\cG'$ is the subgraph of $\cG$ induced by $\cV'$.  Moreover, in this case,
					\[
					\varphi \left( \bigwedge_{v \in \cV} p_v \right) = \fK_{\cG}((1-\varphi_v(p_v))_{v \in \cV}).
					\]
				\end{introthm}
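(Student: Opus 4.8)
The plan is to induct on $|\cV|$, establishing both halves of the statement simultaneously. Write $x_v = 1 - \varphi_v(p_v)$. Grouping cliques according to whether they contain a fixed vertex $v_0$ yields the deletion recursion
\[
\fK_{\cG}\big((x_v)_{v\in\cV}\big) = \fK_{\cG\setminus v_0}\big((x_v)_{v\ne v_0}\big) - x_{v_0}\,\fK_{\cL}\big((x_v)_{v\in\cL}\big),
\]
where $\cL$ is the subgraph induced on the neighbours of $v_0$ (the ``link''). If some proper induced subgraph $\cG'$ (on a set $\cV'\subsetneq\cV$) has $\fK_{\cG'}\le 0$, then $\bigwedge_{v\in\cV'}p_v=0$ by the inductive hypothesis, hence $\bigwedge_{v\in\cV}p_v=0$, and both sides of the claimed equivalence fail; so I may assume every proper induced subgraph has strictly positive polynomial. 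The inductive hypothesis then gives, for $q:=\bigwedge_{v\ne v_0}p_v$ and $r:=\bigwedge_{v\in\cL}p_v$, that $q,r\ne 0$ with $\varphi(q)=\fK_{\cG\setminus v_0}$ and $\varphi(r)=\fK_{\cL}$, and moreover $q\le r$ since $\cL\subseteq\cV\setminus\{v_0\}$.

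Next I use the standard decomposition $M = M_{\cG\setminus v_0} *_{M_{\cL}} M_{\mathrm{star}(v_0)}$ with $M_{\mathrm{star}(v_0)} = M_{\cL}\mathbin{\bar\otimes}M_{v_0}$: writing $A := M_{\cG\setminus v_0}\supseteq B := M_{\cL}$, the state satisfies $\varphi = \varphi\circ E_B$, and $q\in A$, $r\in B$, while $p_{v_0}\in M_{v_0}$ commutes with $B$ and $E_B(p_{v_0}) = \varphi_{v_0}(p_{v_0})\cdot 1$. Since $q\le r$ and $p_{v_0}$ commutes with $r$, one checks $p_{v_0}\wedge q = (p_{v_0}r)\wedge q$, a projection in the compression $rMr$. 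Now $rMr = rAr *_{rBr}(rBr\mathbin{\bar\otimes}M_{v_0})$ is again an amalgamated free product; in it $q$ still lies in $rAr$, while $p_{v_0}r$ is a projection in the tensor factor whose conditional expectation onto $rBr$ (equivalently onto $rAr$) equals the scalar $\varphi_{v_0}(p_{v_0})\cdot r$. Everything thus reduces to the following $B$-valued free-probability fact: \emph{if $p$ is a projection free from $A$ with amalgamation over a common subalgebra $B$, with $[p,B]=0$ and $E_B(p) = t\cdot 1$ scalar, and $q\in A$ is a projection, then $\varphi(p\wedge q) = \max(0,\, t + \varphi(q) - 1)$.} Granting this and applying it in $rMr$ with normalized state $\varphi(\,\cdot\,)/\varphi(r)$, we obtain $\varphi(p_{v_0}\wedge q) = \max\big(0,\, \varphi(q) - x_{v_0}\varphi(r)\big) = \max(0,\fK_{\cG})$; faithfulness of $\varphi$ then gives $\bigwedge_{v}p_v\ne 0 \iff \fK_{\cG} > 0$, which—since all proper subgraph polynomials are already positive—is exactly the asserted equivalence, and closes the induction.

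The hard part is the amalgamated lemma. Over $B=\C$ it is the classical computation of $\varphi$ of the meet of two free projections, extracted from the spectral distribution of $qpq$ in $qMq$ and its atom at $1$ (which equals $p\wedge q$). For general $B$ one must show that a projection that is scalar over $B$ and commutes with $B$ interacts with any projection of $A$ exactly as a scalar-free projection of trace $t$ would, i.e.\ that its mixed moments $\varphi(p a_1 p a_2\cdots)$ with $a_i\in A$ depend only on $t$. I would prove this by expanding the $B$-valued freeness relations, using $[p,B]=0$ together with the collapse identity $(p-t)^2 = (1-2t)(p-t) + t(1-t)\cdot 1$ to eliminate all powers of $p-t$, and matching the outcome to the scalar-free moment formula; one may further reduce to $M_{v_0}=\C^2$, i.e.\ to $A *_B(B\oplus B)$. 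An alternative, more self-contained route—closer to the Cartier--Foata combinatorics \cite{CF1969} from which $\fK_{\cG}$ (a M\"obius/Euler-characteristic invariant of the trace monoid) arises—is to evaluate $\varphi\big(\bigwedge_v p_v\big) = \lim_{s\to\infty}\varphi\big((1 + s\textstyle\sum_v(1-p_v))^{-1}\big)$ directly on the graph-product Fock space, organizing the resulting sum over words by Cartier--Foata normal form; there I expect the positivity bookkeeping to be the subtler point.
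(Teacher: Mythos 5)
Your reduction to the ``amalgamated lemma'' is where the argument breaks: that lemma is false as stated. Take $A=B$ (or, less degenerately, any $q\in B$, which satisfies all your hypotheses since freeness over $B$ puts no constraint on elements of $B$): then $p$ commutes with $q$, so $p\wedge q=pq$ and $\varphi(p\wedge q)=\varphi(E_B(pq))=\varphi(E_B(p)q)=t\,\varphi(q)$, which is strictly larger than $\max(0,t+\varphi(q)-1)$ whenever $0<t,\varphi(q)<1$. More generally, if $q\le b$ for a projection $b\in B$ with $\varphi(b)<1$, the answer again exceeds the free-projection value. The same computation kills your proposed proof of the lemma: writing $x=p-t$, operator-valued freeness gives $E_B(x\,a_1\,x\,a_2)=t(1-t)\,E_B(a_1)E_B(a_2)$, so for $\varphi$-centered $b\in B$ one gets $\varphi(xbxb)=t(1-t)\varphi(b^2)\neq 0$, whereas scalar freeness of $W^*(p)$ from $A$ would force this to vanish. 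So the mixed moments do \emph{not} depend only on $t$ and the scalar distribution of $A$; they depend on the $B$-valued data, i.e.\ on how $q$ sits relative to $B$. (The collapse identity for $(p-t)^2$ cannot repair this, since the obstruction already appears at length~$4$.)

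This is not a fixable detail but the heart of the problem: the identity you actually need, $\varphi(p_{v_0}\wedge q)=\max\bigl(0,\varphi(q)-x_{v_0}\varphi(r)\bigr)$, is true (it is equivalent to the theorem under your inductive hypothesis), but it cannot follow from the data you retain after compressing by $r$ --- namely freeness over $rBr$, $[p_{v_0}r,rBr]=0$, $E_{rBr}(p_{v_0}r)=t\,r$, and the single number $\varphi(q)/\varphi(r)$ --- because abstract configurations with exactly that data can violate the formula (again, $q$ inside $rBr$, or under a small projection of $rBr$). In your induction the new $q=\bigwedge_{v\neq v_0}p_v$ generally has a nontrivial $rBr$-valued position (only in special cases, e.g.\ when $q$ commutes with $rBr$ and has scalar expectation, does freeness over $rBr$ upgrade to scalar freeness of $W^*(p_{v_0}r)$ and $W^*(q)$ and give the free-projection formula). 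Encoding that position correctly is essentially the original problem restated, so the induction does not close. The deletion recursion $\fK_{\cG}=\fK_{\cG\setminus v_0}-x_{v_0}\fK_{\cL}$ and the star decomposition $M=M_{\cG\setminus v_0}*_{M_{\cL}}(M_{\cL}\,\overline{\otimes}\,M_{v_0})$ are sound observations (and the compression step can even be run without the unproved claim $rMr=rAr*_{rBr}rCr$, since freeness of the compressions over $rBr$ is immediate), but the paper avoids this impasse entirely: it reduces to $M_v=\C p_v\oplus\C(1-p_v)$, solves for the components of a vector in $\Ran(\bigwedge_v p_v)$ word by word on the graph-product Fock space, and evaluates the resulting series over $\cG$-reduced words via the Cartier--Foata identity, with the positivity conditions on all induced subgraphs coming from the convergence analysis of that rational series --- much closer to the alternative route you sketch in your last sentence, which you would need to develop in full.
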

				
				First, we remark that the theorem recovers the known characterizations in the free product and tensor product settings.
				
				\begin{example}
					Suppose that $\cG$ is a complete graph (i.e., all vertices are adjacent).  Then the graph product reduces to a tensor product $\bigotimes_{v \in \cV} (M_v,\varphi_v)$ and so $\bigwedge_{v \in \cV} p_v$ will always be nonzero provided each $p_v$ is.
					For each $\cV' \subseteq \cV$, all subsets of $\cV'$ are cliques, and therefore using binomial expansion,
					\[
					\fK_{\cG'}((x_v)_{v \in \cV'}) = \sum_{\cK \subseteq \cV'} (-1)^{|\cK|} \prod_{v \in \cK} x_v = \prod_{v \in \cV'} (1 - x_v).
					\]
					Thus, $\fK_{\cG'}((x_v)_{v \in \cV'})$ is automatically positive when $x_v = 1 - \varphi(p_v) \in [0,1)$.
				\end{example}
				
				\begin{example}
					Suppose that $\cG$ has no edges.  Then the graph product is simply the free product $*_{v \in \cV} (M_v,\varphi_v)$.  Since the only non-empty cliques in $\cG$ are singletons, we have for $\cV' \subseteq \cV$
					\[
					\fK_{\cG'}((x_v)_{v \in \cV'}) = 1 - \sum_{v \in \cV'} x_v.
					\]
					Therefore, in this case, whenever $x_v \in [0,1)$, positivity of $\fK_{\cG}((x_v)_{v \in \cV})$ automatically implies positivity of $\fK_{\cG'}((x_v)_{v \in \cV})$ for every $\cV' \subseteq \cV$.  Hence, we see that $\bigwedge_{v \in \cV} p_v \neq 0$ precisely when $\sum_{v \in \cV} (1 - \varphi(p_v)) < 1$, which recovers the well-known characterization which follows from inductive application of \cite[Theorem 1.1]{Dykema1993freedimension}.
				\end{example}
				
				At this point, it may not be clear why it is necessary to look at the clique polynomials $\fK_{\cG'}$ for induced subgraphs $\cG'$, and not simply the single polynomial $\fK_{\cG}$.  We first point out that if $\bigwedge_{v \in \cV} p_v \neq 0$, then $\bigwedge_{v \in \cV'} p_v \neq 0$ for every $\cV' \subseteq \cV$.
				Thus, whatever characterization we provide for non-vanishing of the projection, if it holds for a graph $\cG$ it must also hold for every induced subgraph.
				The following example illustrates that positivity of $\fK_{\cG}(x)$ alone does not always imply positivity of $\fK_{\cG'}(x|_{\cV'})$ for induced subgraphs.
				
				\begin{example}
					Let $\cG = (\cV,\cE)$ be a graph with $n$ vertices which is $3$-regular (i.e., each vertex has exactly $3$ neighbors) and triangle-free (i.e., there do not exist vertices $i$, $j$, $k$ with $i \sim j \sim k \sim i$).  For instance, $\cG$ could be the complete bipartite graph $K_{3,3}$, the Petersen graph, or the graph given by the vertices and edges of a dodecahedron.  Let $\alpha \in [2/3,1)$, and suppose that $x_v = \alpha$ for all $v \in \cV$, and write $x = (x_v)_{v \in \cV}$. Since $\cG$ is triangle-free, there are no cliques of size greater than two, so
					\[
					\fK_{\cG}(x) = 1 - |\cV| \alpha + |\cE| \alpha^2 = 1 - n \alpha + \frac{3n}{2} \alpha^2 \geq 1.
					\]
					On the other hand, if we consider two vertices $v$ and $w$ that are not adjacent and take $\cV' = (v,w)$, the
					\[
					\fK_{\cG'}(x_v,x_w) = 1 - 2\alpha \leq 1 - 2(2/3) < 0.
					\]
					Thus, $\fK_{\cG}(x) > 0$ and $\fK_{\cG'}(x_v,x_w) < 0$.  In particular, if we are given projections in a graph product as in Theorem \ref{thm: intersection of projections} with $\varphi(p_v) = 1 - \alpha$, then $p_v \wedge p_w = 0$ and so $\bigwedge_{u \in \cV} p_u = 0$ also.
				\end{example}
				
				%Note that in the free case, the only cliques are the empty set and singletons and hence $\fK_{\cG}((x_v)_{v \in \cV}) = 1 - \sum_{v \in \cV} x_v$.  In the case of a complete graph, we sum over all subsets of $\cV$ and by the binomial expansion
				%\[
				%\fK_\cG( (x_v)_{v \in \cV}) = \sum_{\substack{\cK \subseteq \cG' \\\text{ clique}}} (-1)^{|\cK|} \prod_{v \in \cK} x_v = \prod_{v \in \cV'} (1 - x_v).
				%\]
				%Our result in general requires us to test $\fK_{\cG'} > 0$ for all induced subgraphs $\cG' \subseteq \cG$; it is natural that this would be a necessary condition since if $\bigwedge_{v \in \cV} p_v \neq 0$ and $\cV' \subseteq \cV$, then $\bigwedge_{v \in \cV'} p_v$ must also be nonzero and $\varphi( \bigwedge_{v \in \cV'} p_v )$ should be given by $\fK_{\cG'}((1-\varphi_v(p_v))_{v \in \cV'})$.  It is clear in the free case that testing $\cG' = \cG$ is sufficient because the terms $(1 - \varphi_v(p_v))$ all have the same sign, but in general different signs can occur for different sized cliques.
				
				\begin{remark}
					Although $\fK_{\cG}((x_v)_{v \in \cV}) > 0$ is not sufficient to imply that $\fK_{\cG'}((x_v)_{v \in \cV'}) > 0$ for $\cV' \subseteq \cV$, we can state several sufficient conditions in terms of $\fK_{\cG'}$ alone.  Namely, in \S \ref{subsec: region}, we show that it is sufficient to check that $\fK_{\cG}((\alpha x_v)_{v \in \cV}) > 0$ for all $\alpha \in [0,1]$.  Moreover, letting
					\[
					R(\cG) = \{x \in [0,1]^{\cV}: \fK_{\cG'}((x_v)_{v \in \cV'}) > 0 \text{ for all } \cV' \subseteq \cV \},
					\]
					we will show that $R(\cG)$ is the connected component of the region where $\fK_{\cG} > 0$ containing $0$.
				\end{remark}
				
				%However, as we will explain in \S \ref{subsec: region}, the region in $[0,1]^{\cV}$ where $\fK_{\cG'}(x) > 0$ for all $\cG' \subseteq \cG$ can also be obtained from $\fK_{\cG}$ alone; it is the connected component of $\{x \in [0,1]^{\cV}: \fK_{\cG}(x) > 0\}$ containing zero.  Moreover, this region is star-shaped but not necessarily convex.
				
				Now let us state our full result on finite-dimensional factor summands.  In the following $\mathbb{M}_n$ denotes the $n \times n$ matrix algebra $M_n(\mathbb{C})$.  The classification uses the same clique polynomials as in Theorem \ref{thm: intersection of projections}, but now applied to parameters computed from the state on each of the matrix algebra summands.
				
				\begin{introthm} \label{thm: classification of finite-dimensional summands}
					Let $\cG$ be a finite graph, let $(M_v,\varphi_v)$ be von Neumann algebras with faithful normal states, and let $(M,\varphi)$ be the graph product.  For each $v$, suppose that $(M_v,\varphi_v)$ has a direct summand $(\mathbb{M}_{n(v)},\psi_v)$ with weight $\alpha^{(v)}$, and suppose the state $\psi_v$ is given by
					\[
					\psi_v(A) = \Tr_{n(v)}(A \diag(\lambda_1^{(v)},\dots,\lambda_{n(v)}^{(v)})), \text{ for } A \in \mathbb{M}_{n(v)}.
					\]
					Let
					\[
					s^{(v)} = \sum_{j=1}^{n(v)} \frac{1}{\alpha^{(v)} \lambda_j^{(v)}}.
					\]
					Suppose that $\cK = \{v \in \cV: n(v) > 1\}$ is a clique in $\cG$ and that $\fK_{\cG'}((1 - 1/s^{(v)})_{v \in \cV'}) > 0$ for all $\cV' \subseteq \cV$, where $\fK_{\cG'}$ is as in \eqref{eq: main polynomials}.  Then $(M,\varphi)$ has a direct summand
					\[
					(N,\psi) = \bigotimes_{v \in \cK} (\bM_{n(v)},\psi_v)
					\]
					with weight
					\[
					\alpha = \prod_{v \in \cV} \alpha^{(v)} s^{(v)} \; \fK_{\cG}((1 - 1/s^{(v)})_{v \in \cV}) = \prod_{v \in \cV} \alpha^{(v)} \sum_{\substack{\cK \subseteq \cV  \\ \text{clique}}} \prod_{v \in \cV  \setminus \cK} s^{(v)} \prod_{v \in \cK} (1 - s^{(v)}).
					\]
					(For a precise description of $(N,\psi)$, see \S \ref{subsec: construction of matrix units}.)  Moreover, all finite-dimensional factor summands of $(M,\varphi)$ arise in this way.
				\end{introthm}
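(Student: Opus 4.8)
The plan is to classify the type I factor direct summands of $(M,\varphi)$ by analyzing minimal projections of $M$ and the matrix algebras they generate, using Theorem~\ref{thm: intersection of projections} as the combinatorial engine. Recall that a type I factor summand is recorded by a central projection $z$, that $zM\cong\mathbb{M}_m$ is determined by a single minimal projection $e\le z$ (so $eMe=\mathbb{C}e$) together with $m$ mutually equivalent copies of $e$ summing to $z$, and that the state is then fixed by $\varphi(z)=\alpha$ and by the values of $\varphi$ on the minimal projections of $zM$. The first step is a reduction isolating the matrix summands: since $z$ is central it commutes with every $M_v$, so the $\varphi$-preserving conditional expectation $E_{M_v}\colon M\to M_v$ sends $z$ into $Z(M_v)$ with $\varphi_v(E_{M_v}(z))=\alpha$; hence $z$ is concentrated, at each vertex, on finitely many central projections of $M_v$, and for $a$ in a type I factor summand $(\mathbb{M}_{n(v)},\psi_v)$ of $M_v$ one gets $\varphi(za)=c_v\alpha^{(v)}\psi_v(a)$ for a constant $c_v$. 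In particular only the summands $(\mathbb{M}_{n(v)},\psi_v)$ can be ``seen'' by $z$, and one shows that the other atoms and the diffuse part of each $M_v$ are irrelevant to this particular summand of $M$ (they give rise to different summands), so that in both directions one may work as though $M_v=(\mathbb{M}_{n(v)},\psi_v)\oplus(\text{one complementary piece of weight }1-\alpha^{(v)})$.

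For existence, suppose the hypotheses hold. Since $\cK=\{v:n(v)>1\}$ is a clique the algebras $\{M_v:v\in\cK\}$ pairwise commute in $M$, so the von Neumann subalgebra they generate is their tensor product and contains a copy of $\bigotimes_{v\in\cK}(\mathbb{M}_{n(v)},\psi_v)$ whose unit is $Q=\prod_{v\in\cK}q^{(v)}$; fixing matrix units $e^{(v)}_{ij}$ (with the diagonal ones in the centralizer of $\varphi_v$), the products $E_{\vec i\vec j}=\prod_{v\in\cK}e^{(v)}_{i_vj_v}$ already form a system of matrix units for $\bigotimes_{v\in\cK}\mathbb{M}_{n(v)}\subseteq M$. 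The content is to cut this down by a central projection so as to get a genuine direct summand; the guiding principle — and the source of the polynomials — is that a matrix summand $(\mathbb{M}_{n(v)},\psi_v)$ interacts with the rest of the graph product exactly like a one-dimensional summand of ``effective weight'' $1/s^{(v)}=\Tr(\rho_v^{-1})^{-1}$, where $\rho_v$ is the density of $\varphi_v|_{\mathbb{M}_{n(v)}}$ with respect to the unnormalized trace; equivalently $s^{(v)}$ is the index of the conditional expectation from $M_v$ onto $\mathbb{C}q^{(v)}$ determined by $\varphi_v$, which is precisely the normalization with which a matrix block passes through the creation/annihilation operators of the graph-product Fock model. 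Granting this, the explicit construction of \S\ref{subsec: construction of matrix units} reduces nonvanishing of the summand to $\fK_{\cG'}((1-1/s^{(v)})_{v\in\cV'})>0$ for all $\cV'\subseteq\cV$ — this is Theorem~\ref{thm: intersection of projections} for a meet of ``projections'' of traces $1/s^{(v)}$ — and the weight formula there computes the weight of the corresponding $\mathbb{C}$-summand in the reduced picture; restoring the $\mathbb{M}_{n(v)}$ blocks multiplies by $\prod_{v\in\cV}\alpha^{(v)}s^{(v)}$, giving $\alpha=\prod_{v\in\cV}\alpha^{(v)}s^{(v)}\,\fK_{\cG}((1-1/s^{(v)})_{v\in\cV})$, while the tensor structure identifies the state on $N$ with $\bigotimes_{v\in\cK}\psi_v$.

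For the converse, let $(N,\psi)=zM$ be a finite-dimensional summand, hence a type I factor; at each vertex let $q^{(v)}$ be the central projection of the type I summand of $M_v$ that $z$ meets, so that $x\mapsto zx$ gives an injective homomorphism $\mathbb{M}_{n(v)}=q^{(v)}M_v\hookrightarrow N$ (when $zM_v\ne0$). If $v,v'$ are non-adjacent then $M_v$ and $M_{v'}$ are free in $M$, and one checks — via the identity $\varphi(za)=c_v\alpha^{(v)}\psi_v(a)$ of the first step, which keeps the images of the matrix blocks ``centered in the right sense'' — that the images of $M_v$ and $M_{v'}$ remain free inside $(N,\psi)$; since a free pair of subalgebras each containing a projection of trace strictly between $0$ and $1$ generates an infinite-dimensional algebra, not both $n(v)$ and $n(v')$ can exceed $1$, so $\cK=\{v:n(v)>1\}$ is a clique. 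Running the same word-model analysis in reverse then identifies $N$ with $\bigotimes_{v\in\cK}(\mathbb{M}_{n(v)},\psi_v)$, forces the positivity conditions (each $\cV'\subseteq\cV$ yields a nonzero subprojection of $z$ of trace $\fK_{\cG'}((1-1/s^{(v)})_{v\in\cV'})$ by Theorem~\ref{thm: intersection of projections}), and reproduces the weight formula.

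The main obstacle is the existence direction, and specifically the justification that a matrix block behaves as a one-dimensional summand of effective weight $1/s^{(v)}$: there is usually no projection in $M_v$ of $\varphi_v$-trace $1/s^{(v)}$, so one cannot directly apply Theorem~\ref{thm: intersection of projections} to a meet of honest projections, and must instead build the matrix units by hand, tracking how the non-tracial density $\rho_v$ propagates through the alternating-word combinatorics of the graph product, before the scalar $1/s^{(v)}$ can be fed into Theorem~\ref{thm: intersection of projections}. Reaching this ``$\mathbb{M}_1$-at-every-vertex with index $s^{(v)}$'' normal form — after which $\fK_{\cG}$ governs everything — is the crux.
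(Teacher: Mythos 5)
Your proposal correctly identifies the shape of the theorem, but at the two places where the real work happens it substitutes a heuristic for a proof, so there is a genuine gap. In the existence direction, your ``guiding principle'' --- that a matrix summand $(\mathbb{M}_{n(v)},\psi_v)$ interacts with the rest of the graph product like a one-dimensional summand of effective weight $1/s^{(v)}$, after which Theorem~\ref{thm: intersection of projections} can be fed the scalars $1-1/s^{(v)}$ --- is precisely the statement that needs proving, and you say so yourself (``Reaching this normal form \dots is the crux'') without supplying it. Theorem~\ref{thm: intersection of projections} applies only to meets of honest projections, and in the paper it is not an independent engine one can cite here: it is deduced from the same machinery as the present theorem, namely the Fock-space computation of Lemmas~\ref{lem: master vector lemma} and~\ref{lem: Hilbert space computation of projections}. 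That computation --- solving recursively for the word components $\xi_{\vec{\imath},w}$ of a vector in the range of $q_{\vec{1}}$, packaging the matrix-unit combinatorics into the operators $S^{(v)}$ with $(S^{(v)})^2=s^{(v)}S^{(v)}$, and pairing against the left eigenvector $\gamma$ to collapse the norm to the scalar series $\sum_{w}(s^{(w_1)}-1)\cdots(s^{(w_\ell)}-1)$ --- is exactly how the ``index $s^{(v)}$'' normal form is justified and how both the nonvanishing criterion and the formula for $\varphi(q_{\vec{\imath}})$ (hence the weight $\alpha$) are obtained. Your write-up also appeals to ``the explicit construction of \S\ref{subsec: construction of matrix units},'' i.e.\ to the paper's own construction, rather than carrying out the cut-down by $q_{\vec{\imath}}=\bigwedge_{\vec{\jmath}}e^{(\cK)}_{\vec{\imath},\vec{\jmath}}p_{\vec{\jmath}}e^{(\cK)}_{\vec{\jmath},\vec{\imath}}$ and verifying that the resulting $f_{\vec{\imath},\vec{\jmath}}$ span a two-sided ideal with the correct state (Lemma~\ref{lem: construction of f d summand and state} and the ideal lemma following it).

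In the converse direction there is a concrete false step: you assert that for non-adjacent $v,v'$ the images of $M_v$ and $M_{v'}$ ``remain free inside $(N,\psi)$.'' Compression by a central projection $z=1_N$ does not preserve freeness, and the moment identity $\varphi(za)=c_v\alpha^{(v)}\psi_v(a)$ controls only single-algebra moments, not the mixed moments $\varphi(z\,a_1b_1a_2b_2\cdots)$, which involve $E_{A}[1_N]$ for $A$ the algebra generated by the relevant spectral projections. The paper's argument goes the other way: it works inside $M$, uses Dykema's classification of $A_v\ast A_{v'}$ for finite-dimensional abelian $A_v,A_{v'}$, and shows via a conditional-expectation/atomicity argument (using \cite[IV.2.2.2]{Blackadar2006encyclopedia}) that the support of $E_A[1_N]$ lies under the atoms $e_i\wedge f_j$, which forces one of $N_v,N_{v'}$ to be one-dimensional. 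Likewise ``running the same word-model analysis in reverse'' is not enough to show every finite-dimensional summand equals the constructed one; the paper's Lemma~\ref{lem: realization of all f d summands} does this by the order-theoretic argument $\pi(p_{\vec{\imath}})\leq e^{(\cK)}_{\vec{\imath},\vec{\jmath}}p_{\vec{\jmath}}e^{(\cK)}_{\vec{\jmath},\vec{\imath}}$ for all $\vec{\jmath}$, hence $1_N\leq 1_{\tilde N}$ and $N=\tilde N$ by minimality. So the overall strategy is aligned with the paper, but the two pivotal steps --- the Hilbert-space computation giving the $s^{(v)}$-series and state values, and a correct freeness/atomicity argument for the clique condition --- are missing or flawed as written.
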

				
				\begin{remark}
					If $\fK_{\cG'}((1 - 1/s^{(v)})_{v \in \cV'}) > 0$ for all $\cV' \subseteq \cV$, then the condition that $\cK = \{v \in \cV: n(v) > 1\}$ is a clique is automatic.  Indeed, if $n(v) > 1$, then $s^{(v)} \geq 2$ since $\alpha^{(v)}$ and $\lambda_j^{(v)}$ are in $(0,1)$.  Thus, we have $1 - 1/s^{(v)} \geq 1/2$ for $v \in \cK$.  Now if $v$ and $v'$ were in $\cK$ and not adjacent, then taking $\cV' = \{v,v'\}$, we would have
					\[
					\fK_{\cG'}((1 - 1/s^{(w)})_{w \in \cV'}) = 1 - (1 - 1/s^{(v)}) - (1 - 1/s^{(v)}) \leq 1 - 1/2 - 1/2 = 0,
					\]
					which contradicts our assumption of positivity of $\fK_{\cG'}$.
				\end{remark}
				
				\begin{example}
					Let us give a sample computation with the smallest graph for which the graph product cannot be expressed in terms of tensor and free products.

					Consider the path graph $\cP_4$ with vertices $1,2,3,4$ and edges $\{1,2\}$, $\{2,3\}$, and $\{3,4\}$, and the following algebras:
					\begin{itemize}
						\item $(M_1,\varphi_1) = (M_4,\varphi_4) = \C \oplus \C$ with weight $23/25$ on the first summand and $2/25$ on the second summand.  We take $N_1 = N_4$ to be the first summand.  Thus,
						\[
						\alpha^{(1)} = \alpha^{(4)} = 23/25, \quad 
						\lambda_1^{(1)} = \lambda_1^{(4)} = 1, \quad s^{(1)} = s^{(4)} = 25/23.
						\]
						\item $(M_2,\varphi_2) = (M_3,\varphi_3) = \mathbb{M}_2$ with the state given by $A \mapsto \Tr(A \diag(3/5,2/5))$.  We take $N_2 = M_2$ and $N_3 = M_3$.  Thus,
						\[
						\alpha^{(2)} = \alpha^{(3)} = 1, \quad \lambda_1^{(2)} = \lambda_1^{(3)} = 3/5, \quad \lambda_2^{(2)} = \lambda_2^{(3)} = 2/5, \quad s^{(2)} = s^{(3)} = 25/6.
						\]
					\end{itemize}
					Now let $x_v = 1 - 1 / s^{(v)}$, namely
					\[
					x_1 = x_4 = \tfrac{2}{25}, \qquad x_2 = x_3 = \tfrac{19}{25}.
					\]
					Then
					\begin{align*}
						\fK_{\cP_4}(x) 
						&= 1 - x_1 - x_2 - x_3 - x_4 + x_1x_2 + x_2x_3 + x_3x_4 \\
						&=  1 - \frac{2}{25} - \frac{19}{25} - \frac{19}{25} - \frac{2}{25} + \frac{2}{25} \cdot \frac{19}{25} + \frac{19}{25} \cdot \frac{19}{25}  + \frac{19}{25} \cdot \frac{2}{25}
						%= - \frac{17}{25} + \frac{23 \cdot 19}{25 \cdot 25}
						%= \frac{23 \cdot 19 - 25 \cdot 17}{25 \cdot 25} \\
						%= \frac{23 \cdot 2 - 2 \cdot 17}{25 \cdot 25}
						= \frac{12}{625} > 0.
					\end{align*}
					One can check that $\fK_{\cG'}(x) > 0$ for every induced subgraph, too.  (In fact it turns out that $\fK_{\cP_4}(y) > 0$ implies $\fK_{\cG'}(y) > 0$ for every induced subgraph $\cG'$ of $\cP_4$ and every $y \in (0,1)^4$, although showing this takes a bit of work, with the hardest case corresponding to the subgraph induced by $\{1,4\}$.)
					Theorem \ref{thm: classification of finite-dimensional summands} now yields a summand of the form $(N_2,\psi_2) \otimes (N_3,\psi_3)$, or $\mathbb{M}_4$ with state given by $\diag(9/25,6/25,6/25,4/25)$.  Its weight in the direct sum decomposition is
					\[
					\alpha = \prod_{v \in \cV} \alpha^{(v)} s^{(v)} \fK_{\cP_4}(x) = 1 \cdot \frac{25}{6} \cdot \frac{25}{6} \cdot 1 \cdot \frac{12}{625}
					%= \frac{12}{36}
					= \frac{1}{3}.
					\]
				\end{example}
				
				Our result in Theorem \ref{thm: classification of finite-dimensional summands} extends to infinite-dimensional $B(H)$ summands as follows.  Here we add the assumption that $M_v \neq \C$ to avoid annoying trivial cases later on.  Of course, in general, we can consider the graph $\cG'$ obtained by deleting all vertices where $M_v = \C$.  Then $\gp_{v \in \cG} (M_v,\varphi_v)$ is canonically isomorphic to $\gp_{v \in \cG'} (M_v,\varphi_v)$, for which the theorem below applies.
				
				\begin{introthm} \label{thm: classification of B(H) summands}
					Let $\cG$ be a finite graph, let $(M_v,\varphi_v)$ be von Neumann algebras with faithful normal states with $M_v \neq \C$, and let $(M,\varphi)$ be the graph product.
					Suppose that $(N,\psi)$ is an infinite-dimensional type I factor direct summand of $(M,\varphi)$.
					%Then $(N,\psi)$ can be expressed as follows.
					Then there is a graph join decomposition $\cG = \cG_1 + \cG_2$ (see \S \ref{subsec: graphs}) where $\cG_1$ is a (non-empty) complete graph but $\cG_2$ is allowed to be empty.
					For every vertex $v \in \cV_1$, there is an infinite-dimensional type I factor summand $(N_v,\psi_v)$ in $(M_v,\varphi_v)$.
					For every vertex $v \in \cG_2$, there is a finite-dimensional type I factor summand $(N_v,\psi_v)$.  The given summand $(N,\psi)$ arises as the tensor product of $\bigotimes_{v \in \cV_1} (N_v,\psi_v)$ with a finite-dimensional factor direct summand in $\gp_{v \in \cG_2} (M_v,\varphi_v)$ described by Theorem \ref{thm: classification of finite-dimensional summands}.
				\end{introthm}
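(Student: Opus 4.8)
The plan is to reduce to the minimal-projection analysis underlying Theorem~\ref{thm: classification of finite-dimensional summands}, isolating the one new phenomenon: an infinite-dimensional type~I contribution can only come from a vertex joined to all the others. We may assume $M_v \neq \C$ for every $v$. The starting point is that a type~I factor summand of $M$ is the same data as a minimal projection together with a multiplicity: if $N = Mz$ is a type~I factor summand then $z$ is a minimal central projection of $M$ and any rank-one projection $e \le z$ of $N \cong B(H)$ satisfies $eMe = \C e$; conversely, $eMe = \C e$ forces the central support $z(e)$ to be a minimal central projection (any central splitting of $z(e)$ would split the minimal projection $e$), so $Mz(e)$ is a type~I factor summand containing $e$, whose multiplicity is the number of mutually orthogonal projections equivalent to $e$ that sum to $z(e)$. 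So it suffices to describe the minimal projections of $M$ and decide when the associated summand is infinite-dimensional.

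By the analysis in the proof of Theorem~\ref{thm: classification of finite-dimensional summands}, a minimal projection $e$ of $M$ carries, at each vertex $v$, the state $\chi_v$ on $M_v$ given by $eae = \chi_v(a)\,e$, with support projection $q_v = s(\chi_v) \in M_v$; the set $\cK := \{v : q_v \text{ not central in } M_v\}$ is a clique, and $Mz(e)$ is isomorphic to the tensor product over $\cK$ of the ``$v$-components'' $M_v\,z_{M_v}(q_v)$, each (as that analysis shows) a type~I factor summand of $M_v$. Call $v$ an \emph{infinite vertex} when $M_v\,z_{M_v}(q_v)$ is infinite-dimensional, and let $\cG_1$ be the induced subgraph on the set $\cV_1$ of infinite vertices. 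Since $Mz(e) \cong N$ is an infinite-dimensional type~I \emph{factor}, $\cV_1$ is non-empty and finite --- an infinite tensor product of infinite-dimensional type~I factors with faithful normal states is never type~I --- and since $\cV_1 \subseteq \cK$, the graph $\cG_1$ is complete.

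The crux is that $\cG_1$ is a join component of $\cG$: every $v_0 \in \cV_1$ is adjacent to every other vertex. Suppose instead that $v_0 \not\sim w$; since $M_w \neq \C$, this confronts the infinite-dimensional $v_0$-component with a non-trivial free neighbour. By the induced-subgraph compatibility of graph products, $\langle M_{v_0}, M_w\rangle$ is the free product $(M_{v_0},\varphi_{v_0}) * (M_w,\varphi_w)$, with a $\varphi$-preserving conditional expectation from $M$, and a free-probability computation in the spirit of the proof of Theorem~\ref{thm: intersection of projections} --- the free compression of the atoms of $M_{v_0}\,z_{M_{v_0}}(q_{v_0})$ by projections from $M_w$ --- shows that no minimal projection of $M$ can have an infinite-dimensional $v_0$-component under this hypothesis; equivalently, such a minimal projection would produce an infinite-dimensional type~I factor summand of a free product of two non-trivial von Neumann algebras with faithful normal states, contradicting Ueda's classification \cite{UedaTypeIIIfreeproduct}. (A finite-dimensional $v_0$-component is not obstructed in this way: finitely many of its atoms persist inside the clique $\cK$, but infinitely many cannot survive a free compression.) Hence $\cG = \cG_1 + \cG_2$, with $\cG_1$ complete and $\cG_2$ the induced subgraph on $\cV \setminus \cV_1$.

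To finish: since $\cG_1$ is complete and joined to $\cG_2$, $M = \bigl(\bigotimes_{v \in \cV_1} M_v\bigr) \otimes M_{\cG_2}$ with $M_{\cG_2} = *_{v \in \cG_2}(M_v,\varphi_v)$, and the minimal central projection $z$ factors as $z = \bigl(\bigotimes_{v \in \cV_1} z_v\bigr) \otimes z'$, with $z_v$ minimal central in $M_v$ --- giving an infinite-dimensional type~I factor summand $(N_v,\psi_v)$ of $(M_v,\varphi_v)$ --- and $z'$ minimal central in $M_{\cG_2}$. Every vertex of $\cG_2$ is a finite vertex, so $M_{\cG_2}\,z'$ is a finite-dimensional type~I factor summand of the graph product $M_{\cG_2}$, hence of the form classified by Theorem~\ref{thm: classification of finite-dimensional summands}; and $(N,\psi) = \bigl(\bigotimes_{v \in \cV_1}(N_v,\psi_v)\bigr) \otimes (M_{\cG_2}z',\psi')$, as asserted. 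Conversely, any tensor product of an infinite-dimensional type~I factor summand over a non-empty complete join component with a finite-dimensional summand of the complementary graph product is manifestly a type~I factor summand of $M$, so the description is exhaustive. I expect the main obstacle to be the third paragraph --- the free-compression argument, and the clean isolation of the free-product input --- together with the routine but nontrivial task of checking that the ``$v$-component'' bookkeeping of Theorem~\ref{thm: classification of finite-dimensional summands}, including the factorisations of $z$ and of the weight, carries over verbatim when some components are infinite-dimensional.
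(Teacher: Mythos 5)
Your proposal follows essentially the same route as the paper: identify per-vertex components of the given summand, show the infinite-dimensional ones form a clique of universal vertices (hence a complete join component), and then distribute tensor over direct sums. The one substantive difference is in the crux step, your third paragraph: the paper proves the key lemma (an infinite-dimensional $N_{v_0}$ forces $v_0$ to be adjacent to everything) by a short, self-contained computation --- pick minimal projections $p_k \in N_{v_0}$ with $\varphi(p_k) \to 0$, note $N_w$ is $1$-dimensional with $\varphi(1_{N_w}) < 1$, so for large $k$ freeness forces $p_k \wedge 1_{N_w} = 0$, hence $\pi(p_k) = 0$, contradicting injectivity --- whereas you gesture at a ``free-compression'' argument and offer a citation to Ueda's classification as an alternative. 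Your instinct is right that this is where the work lives; the Ueda route would additionally require justifying why a minimal projection of $M$ with infinite-dimensional $v_0$-component must produce an infinite-dimensional type~I summand of the subalgebra $M_{v_0} * M_w$ (this passes through the conditional-expectation argument the paper uses to show each $N_v$ is atomic), so the paper's direct estimate is both more elementary and shorter. Your final distribution-of-tensor-over-sums step and the converse sentence match the paper's closing argument.
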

				
				Let us sketch the proof technique, focusing on the subproblem in Theorem \ref{thm: intersection of projections}.  In the setting of Theorem \ref{thm: intersection of projections}, the projection $\bigwedge_{v \in \cV} p_v$ must be in the von Neumann algebra generated by $(p_v)_{v \in \cV}$ which is the graph product of $\C p_v \oplus \C (1-p_v)$ for $v \in \cV$.  Hence, we assume without loss of generality that $M_v$ is generated by $p_v$ (thus $\C p_v$ is a one-dimensional summand of $M_v$ so we are in the situation of Theorem \ref{thm: classification of finite-dimensional summands}).  We describe $\bigwedge_{v \in \cV} p_v$ by analyzing its range in the Hilbert space $L^2(M,\varphi)$ directly, after decomposing $L^2(M,\varphi)$ in terms of $\cG$-reduced words (see \S \ref{subsec: words} for definitions).
				
				%  Recall that a \emph{word} over the alphabet $\cV$ is a finite string of letters from $\cV$: $w = w_1 \cdots w_\ell$ with $w_1, \ldots, w_\ell \in \cV$.
				%Recall that a \emph{word} is a string with letters from the alphabet $\cV$, or $w = w_1 \dots w_{\ell}$ where $w_j \in \cV$.
				% A word is \emph{$\cG$-reduced} if whenever $w_i = w_j$, there exists $k$ between $i$ and $j$ such that $w_k$ is neither equal nor adjacent to $w_i$.
				%  Two $\cG$-reduced words $w$ and $w'$ are \emph{equivalent} if $w$ can be transformed into $w'$ by a sequence of transpositions of letters which are adjacent in $\cG$.
				
				We fix a set of equivalence class representatives $\mathcal{W}$ for the $\cG$-reduced words and write $\mathcal{W}_\ell$ for the elements of length $\ell$.  The construction of graph products implies that $L^2(M,\varphi)$ can be written as an orthogonal direct sum
				\[
				L^2(M,\varphi) = \C 1 \oplus \bigoplus_{\ell \in\bN} \bigoplus_{w \in \mathcal{W}_\ell} \overline{\Span}\left\{ x_1 \dots x_\ell: x_j \in M_{w_j}, \varphi_{w_j}(x_j) = 0 \right\}.
				\]
				Since we took $M_v = \C p_v \oplus \C(1 - p_v)$, in this case each summand is one-dimensional since $\ker(\varphi_v) = \Span(\mathring{p}_v)$ where $\mathring{p}_v = p_v - \varphi_v(p_v)$.  If $\xi \in \bigcap_{v \in \cV} \Ran(p_v)$, then one can solve for the terms in the direct sum composition by induction on the length of $w$, which results in
				\[
				\xi = \alpha \sum_{\ell \in\bN_0} \sum_{w \in \mathcal{W}_\ell} \frac{1}{\varphi_{w_1}(p_{w_1}) \dots \varphi_{w_{\ell}}(p_{w_{\ell}})} \mathring{p}_{w_1} \dots \mathring{p}_{w_{\ell}}
				\]
				for some constant $\alpha$.  This computation is parallel to the computation of Hecke eigenvectors for Hecke algebras in \cite[\S 3]{RaumSkalski2023}, though the general computation for Theorem \ref{thm: classification of finite-dimensional summands} is more involved.  Since $\norm{\mathring{p}_v}_{L^2(M_v,\varphi_v)}^2 = \varphi_v(p_v)(1 - \varphi_v(p_v))$, we discover that
				\begin{equation} \label{eq: power series for norm}
					\norm{\xi}^2 = \abs{\alpha}^2 \sum_{\ell \in\bN_0} \sum_{w \in \mathcal{W}_\ell} \prod_{i=1}^\ell \frac{1 - \varphi_{w_i}(p_{w_i})}{\varphi_{w_i}(p_{w_i})}.
				\end{equation}
				In particular, there is a nonzero $\xi$ in the range of $p = \bigwedge_{v \in \cV} p_v$ if and only if this series converges.  In that case, $p$ itself, viewed as an element of $L^2(M,\varphi)$, is such a vector $\xi$ with $\alpha = \varphi(p)$ and $\norm{\xi}_{L^2(M,\varphi)}^2 = \varphi(p)$, which allows us to solve for $\varphi(p)$ as the reciprocal of the infinite series.
				
				Hence, it is crucial to understand the power series
				\begin{equation} \label{eq: power series f}
					f((X_v)_{v \in \cV}) = \sum_{\ell \in\bN_0} \sum_{w \in \mathcal{W}_\ell} X_{w_1} \dots X_{w_{\ell}};
				\end{equation}
				this is the same as the weighted growth series for a Coxeter system used in the context of Hecke algebras; see \cite[Theorem A]{RaumSkalski2023}.
				A closely related power series is
				\begin{equation} \label{eq: power series tilde f}
					\tilde{f}((X_v)_{v \in \cV}) = f((X_v/(1-X_v))_{v \in \cV}), 
				\end{equation}
				which also gives a sum over words up to equivalence, but this time allowing the same letter to repeat arbitrarily, since $X_v / (1 - X_v) = \sum_{k \in \bN} X_v^k$.  So $\tilde{f}((X_v)_{v \in \cV})$ gives the sum over not-necessarily-reduced words up to equivalence.  Fortunately, this power series has come up before.
				
				In \cite{CF1969}, Cartier and Foata introduced and studied partially commutative monoids generated by a finite set of generators subject only to some commutation relations between the generators.
				They used these to study flows on directed graphs
				%(where the generators of the monoid correspond to edges, commuting when they do not share a source)
				and to give a version of MacMahon's Master Theorem with coefficients in a non-commutative ring.
				These partially commutative monoids are sometimes referred to as trace monoids or Cartier--Foata monoids, and have applications across mathematics and computer science: see, for example, \cite{MR3071084, MR1478993, MR1110852}.
				The main identity established in \cite{CF1969}, which is of direct use to us, is an expression of the power series $\tilde f( (X_v)_{v \in \cV})$ as the inverse of a polynomial:
				\[
				\tilde{f}((X_v)_{v \in \cV}) = \left( \sum_{\substack{\cK \subseteq \cG\\\text{clique}}} (-1)^{|\cK|} \prod_{v \in \cK} X_v \right)^{-1}.
				\]
				This is treated as an equation of formal power series with partially commuting indeterminates, but it remains true with commuting indeterminates.
				We remark that the formula was recently used in \cite{OSTY2025} to study the operator norms in graph products.
				We also remark that the power series $\tilde f( (X_v)_{v \in \cV})$ represents a rational function, and this rationality is closely related to the good algorithmic behavior of graph products of groups \cite{HM1995}.
				
				%We also remark that the rationality of this power series is closely related to the good algorithmic behavior of graph products of groups \cite{HM1995}.
				%We also remark that this power series is the inverse of a polynomial, and this rationality is closely related to the good algorithmic behavior of graph products of groups \cite{HM1995}.
				We convert the formula for $\tilde{f}$ above into a formula for $f$ by
				\[
				f((X_v)_{v \in \cV}) = \tilde{f}((X_v/(1+X_v))_{v \in \cV}) = \left( \sum_{\substack{\cK \subseteq \cG\\\text{clique}}} (-1)^{|\cK|} \prod_{v \in \cK} \frac{X_v}{1+X_v} \right)^{-1},
				\]
				and obtain
				\[
				f\left(\left(\frac{1 - \varphi_v(p_v)}{\varphi_v(p_v)}\right)_{v \in \cV}\right) = f\left(\left(\frac{1}{\varphi_v(p_v)} - 1 \right)_{v \in \cV}\right) = \left( \sum_{\substack{\cK \subseteq \cG\\\text{clique}}} (-1)^{|\cK|} \prod_{v \in \cK} (1 - \varphi_v(p_v)) \right)^{-1},
				\]
				which is exactly $\fK_{\cG}((1 - \varphi_v(p_v))_{v \in \cV})^{-1}$. 
				To determine when the series \eqref{eq: power series for norm} converges, we analyze the convergence of \eqref{eq: power series tilde f} on real positive inputs (see \S \ref{subsec: power series formula}, especially Lemma \ref{lem: convergence criteria}).
				The series has positive coefficients, and it agrees with the reciprocal of a polynomial on the region of convergence containing zero; thus, if we increase the values of the inputs from zero, the series will remain convergent until the polynomial vanishes, so the convergence conditions will naturally be stated in terms of positivity of the clique polynomials.
				We also compare the series with other quantities obtained by zeroing out some of the variables, which produces the series corresponding to subgraphs of $\mathcal{G}$ and eventually results in the conditions stated in our theorems.
				
				%Given that the power series is rational, one can detect the radius of convergence by examining the singularities (at least after reducing to one-dimensional slices).  A small amount of argument produces the conditions stated in ou/r theorems (see \S \ref{subsec: power series formula}).
				
				The proof of Theorem \ref{thm: classification of finite-dimensional summands} uses a generalization of the Hilbert space computation sketched above. Here we need to study not just a single projection but matrix units obtained from appropriate intersections of the diagonal matrix units in $\mathbb{M}_{n(v)}$ and their conjugates by partial isometries (see \S \ref{subsec: construction of matrix units}).  The construction of these matrix units is motivated by Dykema's work in \cite[Proposition 3.2]{Dykema1993freedimension} and Ueda's in \cite[Theorem 4.1]{UedaTypeIIIfreeproduct} for free products.  One then needs to show that these are the only finite-dimensional factor summands in $(M,\varphi)$ (see \S \ref{subsec: realization of all}).  We remark that the free product case considered by Dykema and Ueda is much simpler because, first of all, one can reduce to the case of a free product of two von Neumann algebras and, furthermore, in each finite-dimensional summand there can only be one $v$ for which $n(v) > 1$.  Moreover, our approach to showing that these are the only finite-dimensional summands in \S \ref{subsec: realization of all} is completely different and avoids any analysis of the non-atomic part and any induction on the number of summands in $M_v$.
				
				Since we have classified the type I factor summands in graph products, it is natural to seek a full type decomposition for the diffuse part analogous to Ueda's theorem.  Of course, the amount of casework needed for graph products is even more than for free products.  For instance, as seen in \cite{tesseract}, graph products multiply the occurrences of the annoying exceptional case that the free product of two $2$-dimensional algebras can have an $\mathbb{M}_2 \otimes L^\infty[0,1]$ summand (and this case arises naturally from right-angled Coxeter groups).  We leave the full type classification as a problem for future work.
				
				We finally remark on the related problem of finding the atoms of the spectral distribution $\sum_{v \in \cV} x_v$ where $x_v$ is a self-adjoint operator in $(M_v,\varphi_v)$.  If $\mu_v$ is the spectral measure of $x_v$ with respect to the state $\varphi_v$, then the spectral measure $\mu$ of the operator $x = \sum_{v \in \cV} x_v$ can be viewed as a ``graph convolution'' of the measure $(\mu_v)_{v \in \cV}$.  Indeed, in the case of a complete graph, $\mu$ is the classical convolution of $(\mu_v)_{v \in \cV}$ and in the case of a graph with no edges, it is the free convolution.  Moreover, C{\'e}bron, Oliveira Santos, and Youssef \cite{COSY2024} have established central limit theorems for graph products which directly generalize the central limit theorems for classical and free convolution.
				
				Hence, we view the study of atoms for a graph convolution as a natural generalization of the free and classical cases.  The atoms of a free convolution have been studied in depth \cite{BV1998,MaiSpeicherWeber2017,BelBerLiu2021} using the complex-analytic tools of the Cauchy transform and $R$-transform.  We do not yet have an analytically tractable method for exact computation of graph convolutions in general, and the study of its atoms seems like an important subproblem that could give hints about its behavior.
				
				For the free product case, the atoms of the graph convolution are closely related to the atomic summands of the free product von Neumann algebra.  Indeed, by \cite[Theorem 7.4]{BV1998} an atom for the free convolution can only occur at a point $\lambda$ if there are atoms of $\mu_v$ at some points $\lambda_v$ such that $\sum_{v \in \cV} \lambda_v = \lambda$ and the associated eigenspace projections $p_v$ have nonzero intersection (which is the $\lambda$-eigenspace projection for the sum).  In particular, the free sum has atoms if and only if the free product von Neumann algebra $*_{v \in \cV} (\mathrm{W}^*(x_v),\varphi_v)$ has atoms.  However, this last statement fails in general for graph products.  It can happen that the operator $\sum_{v \in \cV} x_v$ has an eigenvalue even though the algebra $\gp_{v \in \cG} (\mathrm{W}^*(x_v),\varphi_v)$ is diffuse, as the following example shows.
				
				\begin{example}
					Consider the graph $\cG$ with $\cV = \{1,2,3,4\}$ and two edges, connecting $1$ to $2$ and $3$ to $4$.
					Thus, for von Neumann algebras $M_1$, $M_2$, $M_3$, $M_4$ we have
					\[
					\gp_{j \in \cG} M_j = (M_1 \otimes M_2) * (M_3 \otimes M_4).
					\]
					Take $M_j$ to be generated by a single projection $p_j$ with state $\varphi_j$ given by
					\[
					\varphi_1(p_1) = \varphi_2(1 - p_2) = \varphi_3(p_3) = \varphi_4(1 - p_4) = 2^{-1/2}.
					\]
					We claim that $p_1 + \dots + p_4$ has an atom at $2$, even though $M = \mathrm{W}^*(p_1,\dots,p_4)$ is diffuse.  Note that $p_1 + p_2$ has an atom at $1$ of size
					\[
					(2^{-1/2})^2 + (1 - 2^{-1/2})^2 = \frac{1}{2} + \frac{3 - 2\sqrt{2}}{2} = 2 - \sqrt{2}.
					\]
					Likewise $p_3 + p_4$ has an atom of size $2 - \sqrt{2}$ at $1$.  Since $p_1 + p_2$ and $p_3 + p_4$ are freely independent, we know that $(p_1 + p_2) + (p_3 + p_4)$ has an atom at $2$ of size
					\[
					(2 - \sqrt{2}) + (2 - \sqrt{2}) - 1  = 3 - 2 \sqrt{2}.
					\]
					However, note that $M_1 \otimes M_2$ has atoms of size $1/2$, $(\sqrt{2}-1)/2$, $(\sqrt{2}-1)/2$, and $(3 - 2 \sqrt{2})/2$, and likewise $M_3 \otimes M_4$.  Since all the atoms in the algebras have size $\leq 1/2$, the free product $(M_1 \otimes M_2) * (M_3 \otimes M_4)$ is diffuse by \cite[Theorem 2.3]{Dykema1993freedimension}.
				\end{example}
				
				Therefore, our main result does not immediately give a full classification of the atoms for a graph convolution, which is another question we leave for future research.

				\subsection*{Organization}
				
				In \S \ref{sec: preliminaries} we recall the necessary background on von Neumann algebras and graph products.
				
				In \S \ref{sec: classification of atoms}, we classify the atoms in graph products in terms of the power series \eqref{eq: power series f}.  The argument is organized into three stages: (1) constructing the matrix units and showing that (if nonzero) they provide a central summand in the graph product, (2) computation of the ranges of the matrix units on the Hilbert space and consequently evaluation of $\varphi$ on these matrix units, and (3) proof that all finite-dimensional summands are realized by this construction.
				
				In \S \ref{sec: power series}, we evaluate the power series using the results of Cartier and Foata \cite{CF1969}.  We then prove the criteria for the existence of atoms stated in the main theorems and discuss the geometry of the region of convergence.
				
				\subsection*{Acknowledgements}
				
				Jekel was supported by a Marie Sk{\l}odowska-Curie Action from the European Union (FREEINFOGEOM, grant id: 101209517).  The authors thank Joachim Kock for pointing out the work of Cartier and Foata \cite{CF1969} during a visit of Charlesworth to Copenhagen, which was supported by the Marie Curie grant and funds from Cardiff University.  We thank Mario Klisse for pointing out the connection with works on Hecke algebras.  We thank the referees for their careful reading, corrections, and suggestions.

				\section{Preliminaries} \label{sec: preliminaries}
				
				We assume familiarity with von Neumann algebras with faithful normal states; for background, see e.g. \cite{Sakai1971,JonesSunder1997}. In particular, we assume standard facts about the GNS construction or standard representation $L^2(M,\varphi)$ and the modular automorphism group $\sigma_t^\varphi$ (though readers who are only interested in the tracial case may safely assume all the states are tracial and disregard mentions of the modular group).
				When it is helpful to draw a distinction between an element of $M$ and its corresponding vector in the GNS space $L^2(M, \varphi)$, we will write $x$ for the former and $\widehat{x}$ for the latter; when the clarification is unnecessary and the surrounding notation is dense we will omit the hat.
				%For $x \in M$, we denote by $\widehat{x}$ the corresponding vector in the GNS space $L^2(M,\varphi)$.
				
				We will denote by $\bN$ the set of positive integers, and by $\bN_0$ the set of non-negative integers.
				
				\subsection{Graphs} \label{subsec: graphs}
				
				Graphs in this paper are assumed to be finite, undirected, and simple, and hence for a graph $\cG = (\cV,\cE)$ we view the edge set $\cE$ as a symmetric subset of $\cV \times \cV$ disjoint from the diagonal.  We write $v \sim v'$ to mean that two vertices $v$ and $v'$ are adjacent.
				
				For a graph $\cG$ and $\cV' \subseteq \cV$, the \emph{subgraph induced by $\cV'$} is the graph $\cG'$ with vertex set $\cV '$ and edge set $\cE ' = \cE  \cap (\cV ' \times \cV ')$.
				
				For graphs $\cG_1$, \dots, $\cG_k$, the \emph{graph join} $\cG = \cG_1 + \dots + \cG_k$ is the graph with vertex set $\cV _1 \sqcup \dots \sqcup \cV _k$ and edge set $\cE$ given by
				\[ \cE = \cE_1 \sqcup \cdots \sqcup \cE_k \sqcup \bigsqcup_{i\neq j} \cV_i \times \cV_j. \]
				That is, vertices $v, v' \in \cV$ are adjacent if and only if:
				\begin{itemize}
					\item $v, v'$ are in the same $\cG_j$ and $(v,v') \in \cE_j$; or
					\item $v \in \cG_i$ and $v' \in \cG_j$ with $i \neq j$.
				\end{itemize}
				If $\cG$ cannot be expressed as the join of two (nonempty) graphs, then $\cG$ is said to be \emph{join-irreducible}.
				Every graph $\cG$ can be decomposed in a unique way as the join of some join-irreducible graphs, which correspond to the connected components of its complement.  %See e.g. \cite[Theorem 1]{JoinDecomposition}.  %  {\color{blue} Insert citation.}
				
				\subsection{Words} \label{subsec: words}
				
				We recall the following terminology for words associated to a graph $\cG = (\cV,\cE)$.
				\begin{itemize}
					\item A \emph{word} on the alphabet $\cV$ is a string $w = w_1 \dots w_{\ell}$ where $w_i \in \cV$; $\ell$ is said to be the \emph{length} of the word.
					Note that the empty word $\emptyset$ is a valid word of length zero.
					\item A word $w$ is \emph{$\cG$-reduced} if whenever $i < j$ and $w_i = w_j$, then there exists $k$ between $i$ and $j$ such that $w_k \neq w_i$ and $w_k \not \sim w_i$.
					\item For a word $w$, an \emph{admissible swap} is a transformation of $w$ into another word $w'$ by switching two consecutive letters $w_i$ and $w_{i+1}$ such that $w_i \sim w_{i+1}$.
					\item Two words $w$ and $w'$ are \emph{equivalent} if $w$ can be transformed into $w'$ by a sequence of admissible swaps.
				\end{itemize}
				It is easy to verify that admissible swaps preserve the length of words and the property of being $\cG$-reduced.
				Moreover, note that admissible swaps never exchange two copies of the same letter; this will be important below when we consider tensor products corresponding to words $w$. %and hence if $w$ and $w'$ are equivalent, the $j$th occurrence of the letter $v$ in $w$ corresponds to the $j$th occurrence of the letter $v$ in $w'$.
				
				Throughout the paper, we fix a set $\mathcal{W}$ of representatives for the equivalence classes of reduced words, and write $\mathcal{W}_\ell$ for the elements of $\mathcal{W}$ with length $\ell$.
				In Section~\ref{sec: power series}, we will also need a set $\overline\cW$ of representatives for the equivalence classes of all words, including the ones which are not reduced.
				
				\subsection{Graph products}
				\label{subsec: graph products}
				
				To describe the definition of graph products of von Neumann algebras with faithful normal states, we first recall the graph product of Hilbert spaces with specified unit vectors.
				This construction is as in \cite[\S 4]{Mlot2004} and as in \cite[\S 3]{CaFi2017}.
				
				Let $\cG = (\cV,\cE)$ be a graph and for each $v \in \cV$, let $(H_v,\Omega_v)$ be a Hilbert space with a specified unit vector.
				Let $H_v^\circ = \Omega_v^{\perp} \subseteq H_v$, so that $H_v \cong \C \Omega_v \oplus H_v^\circ$.
				The \emph{graph product Hilbert space} $\gp_{v \in \cG} (H_v,\Omega_v)$ is the pair $(H,\Omega)$ where
				\[
				H = \C \Omega \oplus \bigoplus_{\ell \in \bN} \bigoplus_{w \in \mathcal{W}_\ell} H_{w_1}^\circ \otimes \dots \otimes H_{w_{\ell}}^\circ.
				\]
				We regard $\C \Omega$ as corresponding to the empty word.% and regard $H_{\emptyset}^\circ$ as $\C \Omega$, corresponding to an empty tensor product which is a copy of the complex numbers.
				
				If $w = w_1 \dots w_{\ell}$ is an arbitrary word, then we write $H_w^\circ = H_{w_1}^\circ \otimes \dots \otimes H_{w_{\ell}}^\circ$, or $H_w^\circ = \bC\Omega$ if $w$ is the empty word (viewing $\bC\Omega$ as an empty tensor product).
				Note that if $w$ and $w'$ are equivalent $\cG$-reduced words of length $\ell$, then there is an associated isomorphism $H_w^\circ \to H_{w'}^\circ$, which for each $v$ sends the $j$th occurrence of the tensorand $H_v^\circ$ in $H_w^{\circ}$ to the $j$th occurrence of $H_v^\circ$ in $H_{w'}^{\circ}$.
				This latter fact follows since a sequence of admissible swaps never swaps two copies of the same letter.
				
				For each vertex $v$, the words may be partitioned into two sets: words $w$ such that the concatenation $vw$ is $\cG$-reduced, and words $w$ that are equivalent to a word $w'$ that begins with $v$.
				Hence, the equivalence classes of words can be enumerated by listing $w$ and $vw$ for every $w$ such that $vw$ is $\cG$-reduced, up to equivalence.
				Therefore,
				\[
				H \cong \bigoplus_{\substack{w \in \mathcal{W}\\ vw\ \cG\text{-reduced}}} (H_w^\circ \oplus H_{vw}^\circ) \cong \bigoplus_{\substack{w \in \mathcal{W}\\ vw\ \cG\text{-reduced}}} (\C \oplus H_v^\circ) \otimes H_w^\circ \cong H_v \otimes \bigoplus_{\substack{w \in \mathcal{W}\\ vw\ \cG\text{-reduced}}} H_w^\circ.
				\]
				Let $U_v$ be the unitary isomorphism from $H$ to the tensor product on the right-hand side.  We therefore obtain a $*$-homomorphism $\lambda_v: B(H_v) \to B(H)$ given by $\lambda_v(T) = U_v^*(T \otimes 1)U_v$.
				
				Given a graph $\cG = (\cV,\cE)$ and von Neumann algebras with faithful normal states $(M_v,\varphi_v)$ for $v \in \cV$, the  \emph{graph product von Neumann algebra} $(M,\varphi) = \gp_{v \in \cG} (M_v,\varphi_v)$ is defined as follows.  Take $H_v = L^2(M_v,\varphi_v)$ and $\Omega_v = \widehat{1} \in L^2(M_v,\varphi_v)$.  Form the graph product Hilbert space $(H,\Omega)$.  Then let $M$ be the von Neumann algebra generated by $\lambda_v(M_v)$ for $v \in \cV$, and let $\varphi$ be the state given by $\varphi(x) = \ip{\Omega, x\Omega}$.  It follows from \cite[\S 3.3]{CaFi2017} that $\varphi$ is a faithful state on $M$, and the map $\lambda_v: M_v \to M$ is state-preserving by construction.  Moreover, the inclusion $\lambda_v$ satisfies $\lambda_v \circ \sigma_t^{\varphi_v} = \sigma_t^{\varphi} \circ \lambda_v$ and equivalently there is a normal state-preserving conditional expectation $M \to M_v$.
				
				If $(M,\varphi)$ is the graph product, we may regard $M_v$ as a von Neumann subalgebra of $M$.  Then the subalgebras $(M_v)_{v \in \cV}$ satisfy the following notion of graph independence:  Given a $\cG$-reduced word $w$ and elements $x_j \in M_{w_j}$ with $\varphi_{w_j}(x_j) = 0$, we have $\varphi(x_1 \dots x_\ell) = 0$.  Note that for $w \in \mathcal{W}$ and $x_j \in M_{w_j}$ with $\varphi_{w_j}(x_j) = 0$, we have
				\[
				x_1 \dots x_\ell \Omega = \widehat{x}_1 \otimes \dots \otimes \widehat{x}_\ell.
				\]
				In particular, this shows that the action of the $M_v$'s on $\Omega$ generates the entire Hilbert space, and therefore the GNS space $(L^2(M,\varphi),\widehat{1})$ is isomorphic to $(H,\Omega)$ by sending $\widehat{x}$ to $x\Omega$.  The graph product von Neumann algebra $(M,\varphi)$ with the specified inclusions of $M_v$ into $M$ is characterized up to isomorphism by this notion of independence \cite[Proposition 3.22]{CaFi2017}.
				
				\section{Classification of atoms} \label{sec: classification of atoms}
				
				\subsection{Construction of matrix units} \label{subsec: construction of matrix units}
				
				\begin{notation} \label{nota: fd summands in gp}
					As usual we will denote by $[k]$ the set $\set{1, \ldots, k}$, and further write $[\vec n]$ for $\prod_{v \in \cV} [n(v)]$.
					We will use $\operatorname{swap}_v^a(\vec{\imath})$ to denote the vector obtained by replacing the $v$th entry of $\vec{\imath}$ with $a$.
					It is often useful to fix a concrete element of $[\vec n]$, so we will write $\vec1 \in [\vec n]$ for the vector with all entries $1$.
					
					Consider a graph product $(M,\varphi) = \gp_{v \in \cG} (M_v,\varphi_v)$ of von Neumann algebras with faithful normal states, and let $\iota_v: M_v \to M$ be the corresponding inclusion.  For each $v$, assume that $(M_v,\varphi_v)$ has a direct summand $(\mathbb{M}_{n(v)},\psi_v)$ with weight $\alpha^{(v)}$.  Let $(e_{i,j}^{(v)})_{i,j=1}^{n(v)}$ be the matrix units in this copy of $\mathbb{M}_{n(v)}$.  Assume that the state $\varphi_v$ on the matrix units satisfies
					\[
					\varphi_v(e_{i,j}^{(v)}) = \alpha^{(v)} \psi_v(e_{i,j}^{(v)}) = \mathbf{1}_{i=j} t_i^{(v)},
					\]
					or equivalently, $\alpha^{(v)} \psi_v$ is the trace against the diagonal matrix $(t_1^{(v)},\dots,t_{n(v)}^{(v)})$; note $t_j^{(v)} > 0$ by faithfulness of $\varphi_v$.
					For $\vec{\imath} \in [\vec n]$ let
					\[
					p_{\vec{\imath}} = \bigwedge_{v \in \cV} e_{i(v),i(v)}^{(v)}.
					\]
					
				\end{notation}
				
				\begin{lemma} \label{lemma: nonzero p means clique}
					With the setup of Notation \ref{nota: fd summands in gp}, suppose $p_{\vec{\imath}}$ is nonzero for all $\vec{\imath}$. Then $\cK = \{v \in \cV: n(v) > 1\}$ is a clique in $\cG$.
				\end{lemma}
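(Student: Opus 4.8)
The plan is to argue by contraposition: assuming some pair $v \neq v'$ in $\cK$ is non-adjacent, I will produce a tuple $\vec{\imath} \in [\vec n]$ with $p_{\vec{\imath}} = 0$.  Fix such $v, v'$ with $v \not\sim v'$ and $n(v), n(v') \geq 2$.  For an arbitrary $\vec{\imath} \in [\vec n]$, the projection $p_{\vec{\imath}} = \bigwedge_{w \in \cV} e_{i(w),i(w)}^{(w)}$ is a wedge of one projection from each $M_w$, so Theorem~\ref{thm: intersection of projections} applies with $p_w := e_{i(w),i(w)}^{(w)}$, noting that $\varphi_w(p_w) = t_{i(w)}^{(w)}$.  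Specializing the stated criterion to the subset $\cV' = \{v,v'\}$: since $v \not\sim v'$, the induced subgraph $\cG'$ has no edge, so its only cliques are $\varnothing$ and the two singletons, whence $\fK_{\cG'}(x_v, x_{v'}) = 1 - x_v - x_{v'}$.  Therefore $p_{\vec{\imath}} \neq 0$ forces
\[
t_{i(v)}^{(v)} + t_{i(v')}^{(v')} - 1 \;=\; \fK_{\cG'}\bigl((1 - t_{i(v)}^{(v)},\, 1 - t_{i(v')}^{(v')})\bigr) \;>\; 0 .
\]

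Now I invoke the normalization built into Notation~\ref{nota: fd summands in gp}: the entries are positive and $\sum_{k=1}^{n(v)} t_k^{(v)} = \alpha_v \psi_v(1_{\mathbb{M}_{n(v)}}) = \alpha_v \leq 1$ (the weight of a direct summand is at most $1$; equivalently $\sum_k t_k^{(v)} = \varphi_v(1_{\mathbb{M}_{n(v)}}) \leq 1$).  Since $n(v) \geq 2$, the smallest entry satisfies $\min_k t_k^{(v)} \leq \alpha_v / n(v) \leq 1/2$, and likewise $\min_k t_k^{(v')} \leq 1/2$.  Choosing $i(v)$ and $i(v')$ to realize these minima, and the remaining coordinates of $\vec{\imath}$ arbitrarily, gives $t_{i(v)}^{(v)} + t_{i(v')}^{(v')} \leq 1$, contradicting the strict inequality above.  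Hence $p_{\vec{\imath}} = 0$ for this $\vec{\imath}$, contradicting the hypothesis that all $p_{\vec{\imath}}$ are nonzero.  So no two vertices of $\cK$ are non-adjacent, i.e.\ $\cK$ is a clique.

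There is no serious obstacle here; the proof is short, and the only points requiring care are (i) the correct invocation of Theorem~\ref{thm: intersection of projections} applied to the particular projections $e_{i(w),i(w)}^{(w)}$, together with the observation that on a two-vertex edgeless subgraph $\fK_{\cG'}$ degenerates to the ``free'' polynomial $1 - x_v - x_{v'}$, and (ii) the elementary averaging bound $\min_k t_k^{(v)} \leq 1/2$, which relies on $n(v) \geq 2$ and $\sum_k t_k^{(v)} \leq 1$.  If one wishes to avoid citing Theorem~\ref{thm: intersection of projections}, the same inequality $t_{i(v)}^{(v)} + t_{i(v')}^{(v')} > 1$ can be obtained directly: since $v \not\sim v'$, graph independence makes $M_v$ and $M_{v'}$ freely independent in $(M,\varphi)$, and $p_{\vec{\imath}} \leq e_{i(v),i(v)}^{(v)} \wedge e_{i(v'),i(v')}^{(v')}$, so the classical criterion for the intersection of two freely independent projections to be nonzero yields the bound.
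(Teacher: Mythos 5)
Your proof is correct, and your closing paragraph (the ``if one wishes to avoid citing Theorem~\ref{thm: intersection of projections}\ldots'' observation) is essentially the paper's argument: use graph independence to conclude $M_v$ and $M_{v'}$ are free when $v \not\sim v'$, pick diagonal units with $\varphi$-values at most $1/2$ via the averaging bound $\min_k t_k^{(v)} \le 1/n(v) \le 1/2$, and invoke the Bercovici--Voiculescu criterion (\cite[Theorem 7.4]{BV1998}) to conclude $e_{i,i}^{(v)} \wedge e_{i',i'}^{(v')} = 0$, hence $p_{\vec\imath} = 0$.

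However, the route you take as your \emph{primary} argument --- applying Theorem~\ref{thm: intersection of projections} to the two-vertex edgeless induced subgraph --- is logically delicate in context. Theorem~\ref{thm: intersection of projections} is stated in the introduction, but its proof appears at the end of \S\ref{subsec: conclusion of proof} and depends on the Fock-space/master-vector machinery of Lemma~\ref{lem: master vector lemma}, Lemma~\ref{lem: Hilbert space computation of projections}, and the Cartier--Foata evaluation. Those lemmas are built on Notation~\ref{nota: fd summands in gp 2}, which requires the $e_{i(v),j(v)}^{(v)}$ for $v \in \cK$ to commute, i.e.\ that $\cK$ is a clique --- exactly what the present lemma certifies. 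The forward reference is not literally circular, because the proof of Theorem~\ref{thm: intersection of projections} reduces to the case $n(v) \equiv 1$, where $\cK = \varnothing$ and this lemma is vacuous; nevertheless, it is poor form to derive an early, easy lemma from a later, heavy theorem that a reader will naturally expect to depend on it. The elementary route you offer as an alternative should be the main proof, and your Theorem~A invocation should at most be a remark. Everything else in your argument --- the reduction to a pair $v \not\sim v'$, the bound $t_{i(v)}^{(v)} + t_{i(v')}^{(v')} \le 1$, the observation $p_{\vec\imath} \le e_{i(v),i(v)}^{(v)} \wedge e_{i(v'),i(v')}^{(v')}$ --- matches the paper.
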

				
				\begin{proof}
					We proceed by contrapositive.
					Suppose that $\cK$ is not a clique.
					Then there are $v$ and $v'$ such that $n(v) > 1$ and $n(v') > 1$ and $v$ is not adjacent to $v'$.
					Since $\sum_{i=1}^{n(v)} \varphi(e_{i,i}^{(v)}) \leq 1$, there exists some $i$ with $\varphi(e_{i,i}^{(v)}) \leq 1/2$.
					Similarly, there exists $i'$ with $\varphi(e_{i',i'}^{(v')}) \leq 1/2$.
					Since $e_{i,i}^{(v)}$ and $e_{i',i'}^{(v')}$ are freely independent, $e_{i,i}^{(v)} \wedge e_{i',i'}^{(v')} = 0$; indeed, the intersection of the projections is the $2$-eigenspace projection of $e_{i,i}^{(v)} + e_{i',i'}^{(v')}$ and it follows from \cite[Theorem 7.4]{BV1998} that the spectral measure of $e_{i,i}^{(v)} + e_{i',i'}^{(v)}$ has no atom at $2$.  Let $\vec{\imath}$ be some tuple of indices with $\vec{\imath}(v) = i$ and $\vec{\imath}(v') = i'$.  Then $p_{\vec{\imath}} \leq e_{i,i}^{(v)} \wedge e_{i',i'}^{(v')} = 0$.
				\end{proof}
				
				For the rest of \S\ref{subsec: construction of matrix units} and \S \ref{subsec: Fock space}, we will assume that $\cK = \{v \in \cV: n(v) > 1\}$ is a clique.
				
				\begin{notation} \label{nota: fd summands in gp 2}
					Using the setup of Notation~\ref{nota: fd summands in gp}, for $\vec{\imath}$, $\vec{\jmath} \in [\vec n]$, observe that
					\[
					e_{\vec{\imath},\vec{\jmath}}^{(\cK)} = \prod_{v \in \cK} e_{i(v),j(v)}^{(v)}
					\]
					is a partial isometry (since the terms in the product commute), and its source projection is
					\[
					e_{\vec{\jmath},\vec{\jmath}}^{(\cK)} = \bigwedge_{v \in \cK} e_{j(v),j(v)}^{(v)} \geq p_{\vec{\jmath}}.
					\]
					Therefore, $e_{\vec{\imath},\vec{\jmath}}^{(\cK)} p_{\vec{\jmath}} e_{\vec{\jmath},\vec{\imath}}^{(\cK)}$ is a projection, and so the following projections are well-defined:
					\[
					q_{\vec{\imath}} = \bigwedge_{\vec{j} \in [\vec n]} e_{\vec{\imath},\vec{\jmath}}^{(\cK)} p_{\vec{\jmath}} e_{\vec{\jmath},\vec{\imath}}^{(\cK)}.
					\]
					Note that by taking $\vec{\jmath} = \vec{\imath}$ as a candidate in the meet, we see $q_{\vec{\imath}} \leq p_{\vec{\imath}}$.  We furthermore note that
					\[
					q_{\vec{\imath}} = e_{\vec{\imath},\vec{\jmath}}^{(\cK)} q_{\vec{\jmath}} e_{\vec{\jmath},\vec{\imath}}^{(\cK)}.
					\]
				\end{notation}
				
				It may happen that $q_{\vec{\imath}} = 0$ for some $\vec{\imath}$, which also implies that $q_{\vec{\imath}} = 0$ for all $\vec{\imath}$.
				We will determine conditions where it is zero in the next subsection (see Lemma~\ref{lem: Hilbert space computation of projections}).  For now, we want to show that if they are nonzero, these projections extend to a set of matrix units in a direct summand of $M$.
				
				\begin{lemma} \label{lem: construction of f d summand and state}
					Suppose $q_{\vec{\imath}} \neq 0$.
					For $\vec{\imath}$, $\vec{\jmath} \in [\vec n]$, let
					\[
					f_{\vec{\imath},\vec{\jmath}} = e_{\vec{\imath},\vec{k}}^{(\cK)} q_{\vec{k}} e_{\vec{k},\vec{\jmath}}^{(\cK)},
					\]
					which is independent of the choice of $\vec{k}$.  Let
					\[
					N = \Span\set*{f_{\vec{\imath},\vec{\jmath}}: \vec{\imath}, \vec{\jmath} \in [\vec n] }.
					\]
					%Let $\alpha = \varphi(1_N)$ and let $\psi = (1/\alpha) \varphi|_N$.
					Then
					\[
					(N,\psi) \cong \bigotimes_{v \in \cV} (\mathbb{M}_{n(v)},\psi_v) \cong \bigotimes_{v \in \cK} (\mathbb{M}_{n(v)},\psi_v),
					\]
					where $\psi = (1/\alpha)\varphi|_N$ and $\alpha = \varphi(1_N)$.
					(Here the second isomorphism comes from the fact that the terms corresponding to $v \in \cV \setminus \cK$ are simply $\mathbb{C}$, so they can be ignored when convenient).
					Under the isomorphism above, $f_{\vec{\imath},\vec{\jmath}} \in N$ corresponds to $e_{\vec{\imath},\vec{\jmath}}^{(\cK)}$ on the right-hand side.  
				\end{lemma}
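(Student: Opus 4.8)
The plan is to verify directly that $\{f_{\vec{\imath},\vec{\jmath}}\}_{\vec{\imath},\vec{\jmath}\in[\vec n]}$ is a system of matrix units with diagonal entries $f_{\vec{\imath},\vec{\imath}}=q_{\vec{\imath}}$, so that $N$ is a full matrix algebra which one identifies with $\bigotimes_{v}\mathbb{M}_{n(v)}$ by sending $f_{\vec{\imath},\vec{\jmath}}$ to $\bigotimes_{v}e^{(v)}_{i(v),j(v)}$, and then to evaluate $\varphi$ on this system of matrix units. The first ingredient is that, since $\cK$ is a clique, the algebras $M_v$ for $v\in\cK$ pairwise commute inside $M$, so
\[
  e^{(\cK)}_{\vec{\imath},\vec{\jmath}}\,e^{(\cK)}_{\vec{k},\vec{l}} = \mathbf{1}_{\vec{\jmath}=\vec{k}}\;e^{(\cK)}_{\vec{\imath},\vec{l}},\qquad \bigl(e^{(\cK)}_{\vec{\imath},\vec{\jmath}}\bigr)^{*}=e^{(\cK)}_{\vec{\jmath},\vec{\imath}}.
\]
Combining these with the identity $q_{\vec{\imath}}=e^{(\cK)}_{\vec{\imath},\vec{k}}q_{\vec{k}}e^{(\cK)}_{\vec{k},\vec{\imath}}$ from Notation~\ref{nota: fd summands in gp 2}, I would first confirm (as asserted in the statement) that $f_{\vec{\imath},\vec{\jmath}}=e^{(\cK)}_{\vec{\imath},\vec{k}}q_{\vec{k}}e^{(\cK)}_{\vec{k},\vec{\jmath}}$ is independent of $\vec{k}$; then, specializing $\vec{k}=\vec{\imath}$ and $\vec{k}=\vec{\jmath}$ and using $q_{\vec{\imath}}\le p_{\vec{\imath}}\le e^{(\cK)}_{\vec{\imath},\vec{\imath}}$, one obtains the two factorizations $f_{\vec{\imath},\vec{\jmath}}=q_{\vec{\imath}}e^{(\cK)}_{\vec{\imath},\vec{\jmath}}=e^{(\cK)}_{\vec{\imath},\vec{\jmath}}q_{\vec{\jmath}}$, whence $f_{\vec{\imath},\vec{\jmath}}^{*}=f_{\vec{\jmath},\vec{\imath}}$ and $f_{\vec{\imath},\vec{\imath}}=q_{\vec{\imath}}$.

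Next comes the product rule. The factorizations give $f_{\vec{\imath},\vec{\jmath}}f_{\vec{k},\vec{l}}=e^{(\cK)}_{\vec{\imath},\vec{\jmath}}q_{\vec{\jmath}}q_{\vec{k}}e^{(\cK)}_{\vec{k},\vec{l}}$; when $\vec{\jmath}=\vec{k}$ this is $e^{(\cK)}_{\vec{\imath},\vec{\jmath}}q_{\vec{\jmath}}e^{(\cK)}_{\vec{\jmath},\vec{l}}=f_{\vec{\imath},\vec{l}}$, and when $\vec{\jmath}\neq\vec{k}$ it vanishes because $q_{\vec{\jmath}}\le p_{\vec{\jmath}}$ and $q_{\vec{k}}\le p_{\vec{k}}$ are orthogonal: they disagree in some coordinate $v$, which must lie in $\cK$ since $n(v)=1$ otherwise, and there $e^{(v)}_{j(v),j(v)}\perp e^{(v)}_{k(v),k(v)}$. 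Since the $q_{\vec{\imath}}$ are nonzero by hypothesis and mutually orthogonal, $1_N:=\sum_{\vec{\imath}}q_{\vec{\imath}}$ is a projection and $N=\Span\{f_{\vec{\imath},\vec{\jmath}}\}$ is a unital $*$-subalgebra of $1_N M 1_N$ with a system of matrix units indexed by $[\vec n]=\prod_{v\in\cV}[n(v)]$; being finite-dimensional it is automatically a von Neumann subalgebra, and $N\cong\mathbb{M}_{|[\vec n]|}(\C)$. Fixing the standard identification $\mathbb{M}_{|[\vec n]|}(\C)\cong\bigotimes_{v\in\cV}\mathbb{M}_{n(v)}$ under which $f_{\vec{\imath},\vec{\jmath}}\mapsto\bigotimes_{v\in\cV}e^{(v)}_{i(v),j(v)}$ — which equals $e^{(\cK)}_{\vec{\imath},\vec{\jmath}}$ once the $v\notin\cK$ tensorands, each a copy of $\C$, are discarded — yields both stated algebra isomorphisms and the identification of $f_{\vec{\imath},\vec{\jmath}}$ with $e^{(\cK)}_{\vec{\imath},\vec{\jmath}}$; the second isomorphism $\bigotimes_{v\in\cV}(\mathbb{M}_{n(v)},\psi_v)\cong\bigotimes_{v\in\cK}(\mathbb{M}_{n(v)},\psi_v)$ is the trivial remark that the discarded factors are $(\C,\mathrm{id})$.

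It remains to check that $\psi=(1/\alpha)\varphi|_N$ corresponds to $\bigotimes_{v\in\cV}\psi_v$, equivalently that $\varphi(f_{\vec{\imath},\vec{\jmath}})=\alpha\prod_v\psi_v(e^{(v)}_{i(v),j(v)})$. Here I would use modular theory. Because $\mathbb{M}_{n(v)}$ is a direct summand of $M_v$ on which $\alpha_v\psi_v=\Tr(\,\cdot\,\diag(t^{(v)}_1,\dots,t^{(v)}_{n(v)}))$, the modular group $\sigma^{\varphi_v}_t$ acts on it by $\operatorname{Ad}\diag\bigl((t^{(v)}_j)^{it}\bigr)$, so $\sigma^{\varphi_v}_t(e^{(v)}_{i,j})=(t^{(v)}_i/t^{(v)}_j)^{it}e^{(v)}_{i,j}$, and in particular the diagonal matrix units $e^{(v)}_{i,i}$ are $\sigma^{\varphi_v}$-fixed. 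Since $\iota_v\circ\sigma^{\varphi_v}_t=\sigma^\varphi_t\circ\iota_v$, each $p_{\vec{\jmath}}$ is $\sigma^\varphi$-fixed (a meet of $\sigma^\varphi$-fixed projections), hence so is each $e^{(\cK)}_{\vec{\imath},\vec{\jmath}}p_{\vec{\jmath}}e^{(\cK)}_{\vec{\jmath},\vec{\imath}}$ (the scalars picked up from $e^{(\cK)}_{\vec{\imath},\vec{\jmath}}$ and $e^{(\cK)}_{\vec{\jmath},\vec{\imath}}$ are reciprocal and cancel), hence so is each $q_{\vec{\imath}}$; therefore $\sigma^\varphi_t(f_{\vec{\imath},\vec{\jmath}})=\mu_{\vec{\imath},\vec{\jmath}}^{\,it}f_{\vec{\imath},\vec{\jmath}}$, where $\mu_{\vec{\imath},\vec{\jmath}}=\prod_{v\in\cV}t^{(v)}_{i(v)}\big/t^{(v)}_{j(v)}$.

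Finally I would feed this into the KMS identity $\varphi(ab)=\varphi\bigl(b\,\sigma^\varphi_{-i}(a)\bigr)$. Taking $a=f_{\vec{\imath},\vec{\jmath}}$, $b=f_{\vec{\jmath},\vec{\imath}}$ and using $f_{\vec{\imath},\vec{\jmath}}f_{\vec{\jmath},\vec{\imath}}=q_{\vec{\imath}}$ gives $\varphi(q_{\vec{\imath}})=\mu_{\vec{\imath},\vec{\jmath}}\varphi(q_{\vec{\jmath}})$, so $\varphi(q_{\vec{\imath}})\big/\prod_v t^{(v)}_{i(v)}$ is independent of $\vec{\imath}$; summing over $\vec{\imath}$ and using $\sum_i t^{(v)}_i=\varphi_v(1_{\mathbb{M}_{n(v)}})=\alpha_v$ evaluates this common constant as $\alpha/\prod_v\alpha_v$, whence $\varphi(f_{\vec{\imath},\vec{\imath}})=\varphi(q_{\vec{\imath}})=\alpha\prod_v(t^{(v)}_{i(v)}/\alpha_v)=\alpha\prod_v\psi_v(e^{(v)}_{i(v),i(v)})$. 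Taking instead $a=f_{\vec{\imath},\vec{\jmath}}$, $b=q_{\vec{\jmath}}$ for $\vec{\imath}\neq\vec{\jmath}$, and using $f_{\vec{\imath},\vec{\jmath}}q_{\vec{\jmath}}=f_{\vec{\imath},\vec{\jmath}}$ and $q_{\vec{\jmath}}f_{\vec{\imath},\vec{\jmath}}=q_{\vec{\jmath}}q_{\vec{\imath}}e^{(\cK)}_{\vec{\imath},\vec{\jmath}}=0$, gives $\varphi(f_{\vec{\imath},\vec{\jmath}})=0$. Since a state on a matrix algebra is determined by its values on matrix units, this identifies $\psi$ with $\bigotimes_{v\in\cV}\psi_v$ and completes the proof. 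I expect this last step — transporting the ratios of the weights $t^{(v)}_j$ across the partial isometries $e^{(\cK)}_{\vec{\imath},\vec{\jmath}}$ — to be the only genuine obstacle, since it truly uses the modular covariance of the $\iota_v$ and the KMS relation; a reader interested only in the tracial case can bypass it and argue directly that $\varphi(q_{\vec{\imath}})=\varphi(e^{(\cK)}_{\vec{\imath},\vec{\jmath}}q_{\vec{\jmath}}e^{(\cK)}_{\vec{\jmath},\vec{\imath}})=\varphi(q_{\vec{\jmath}})$ by traciality.
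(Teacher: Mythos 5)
Your proof is correct, and its first half (the verification that the $f_{\vec{\imath},\vec{\jmath}}$ form a system of matrix units, via the two factorizations $f_{\vec{\imath},\vec{\jmath}} = q_{\vec{\imath}}e_{\vec{\imath},\vec{\jmath}}^{(\cK)} = e_{\vec{\imath},\vec{\jmath}}^{(\cK)}q_{\vec{\jmath}}$ and orthogonality of the $q_{\vec{\imath}}$) coincides with the paper's argument, if anything spelling out the case $\vec{\jmath}\neq\vec{k}$ that the paper leaves implicit. The state computation is where you take a mildly different route: the paper never invokes the modular group in this lemma, but instead transports the algebraic identity $\varphi_v(e_{i,j}^{(v)}x) = (t_i^{(v)}/t_j^{(v)})\,\varphi_v(x e_{i,j}^{(v)})$ from $M_v$ to all of $M$ using the state-preserving conditional expectation $E_v\colon M \to M_v$, then deduces $\varphi(f_{\vec{\imath},\vec{\jmath}})=0$ off the diagonal and that the diagonal values are proportional to $\prod_v t_{i(v)}^{(v)}$, concluding by normalization. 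You reach the same two facts by showing each $f_{\vec{\imath},\vec{\jmath}}$ is an eigenvector of $\sigma^\varphi$ (using modular covariance of the embeddings, $\iota_v\circ\sigma_t^{\varphi_v}=\sigma_t^\varphi\circ\iota_v$, and $\sigma^\varphi$-invariance of the $q_{\vec{\imath}}$) and then applying the KMS condition; this is legitimate since eigenvectors of the modular flow are entire analytic, and it is essentially the same mechanism the paper itself uses later in the proof of Lemma \ref{lem: Hilbert space computation of projections} to kill the coefficients $\alpha_{\vec{\jmath}}$ for $\vec{\jmath}\neq\vec{\imath}$. The trade-off is minor: the paper's conditional-expectation argument is purely algebraic and avoids analytic continuation, while yours requires the standard KMS machinery but also yields the explicit value $\varphi(q_{\vec{\imath}})=\alpha\prod_v t_{i(v)}^{(v)}/\alpha_v$ along the way (expressed in terms of the still-unknown $\alpha$, so no additional information beyond what the paper gets from proportionality plus normalization). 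No gaps.
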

				
				\begin{proof}
					Note that
					\[
					f_{\vec{\imath},\vec{\jmath}} f_{\vec{\jmath},\vec{k}} = e_{\vec{\imath},\vec{1}}^{(\cK)} q_{\vec{1}} e_{\vec{1},\vec{\jmath}}^{(\cK)} e_{\vec{\jmath},\vec{1}}^{(\cK)} q_{\vec{1}} e_{\vec{1},\vec{\jmath}}^{(\cK)} =  e_{\vec{\imath},\vec{1}}^{(\cK)} q_{\vec{1}} e_{\vec{1},\vec{1}}^{(\cK)} q_{\vec{1}} e_{\vec{1},\vec{\jmath}}^{(\cK)} = e_{\vec{\imath},\vec{1}}^{(\cK)} q_{\vec{1}} e_{\vec{1},\vec{\jmath}}^{(\cK)}.
					\]
					Similarly, $f_{\vec{\imath},\vec{\jmath}}^* = f_{\vec{\jmath},\vec{\imath}}$.  Thus, the generators satisfy the relations of matrix units, so we have the claimed isomorphism for $N$ as a $*$-algebra.
					
					Now we must show that the isomorphism preserves the state.  By our assumption on $\varphi_v|_{\mathbb{M}_{n(v)}}$, we have for $x \in M_v$ that
					\[
					\varphi_v(e_{i,j}^{(v)} x) = \frac{t_i^{(v)}}{t_j^{(v)}} \varphi_v(x e_{i,j}^{(v)}).
					\]
					Since there is a state-preserving conditional expectation $E_v: M \to M_v$, we have that for all $x \in M$,
					\[
					\varphi(e_{i,j}^{(v)} x) = \varphi_v(e_{i,j}^{(v)} E_v[x]) = \frac{t_i^{(v)}}{t_j^{(v)}} \varphi_v(E_v[x] e_{i,j}^{(v)}) = \frac{t_i^{(v)}}{t_j^{(v)}} \varphi(x e_{i,j}^{(v)}).
					\]
					Since the operators $e_{i,j}^{(v)}$ and $e_{i',j'}^{(v')}$ commute for distinct $v, v' \in \cK$, we see that
					\[
					\varphi(e_{\vec{\imath},\vec{\jmath}}^{(\cK)} x) = \prod_{v \in \cK} \frac{t_{i(v)}^{(v)}}{t_{j(v)}^{(v)}} \varphi(x e_{\vec{\imath},\vec{\jmath}}^{(\cK)}).
					\]
					In particular, when $\vec{\imath} \neq \vec{\jmath}$, we have
					\[
					\varphi(f_{\vec{\imath},\vec{\jmath}})
					= \varphi(e_{\vec{\imath},\vec{1}}^{(\cK)} q_{\vec{1}} e_{\vec{1},\vec{\jmath}}^{(\cK)})
					= \prod_{v \in \cK} \frac{t_{i(v)}^{(v)}}{t_{1}^{(v)}} \varphi(q_{\vec{1}} e_{\vec{1},\vec{\jmath}}^{(\cK)} e_{\vec{\imath},\vec{1}}^{(\cK)})
					= 0.
					\]
					Thus,
					\[
					\psi(f_{\vec{\imath},\vec{\jmath}}) = 0 = \left[ \bigotimes_{v \in \cK} \psi_v \right](e_{\vec{\imath},\vec{\jmath}}^{(\cK)}).
					\]
					Now note that since $\varphi|_N$ is proportional to $\psi$
					\[
					\frac{\psi(f_{\vec{\imath},\vec{\imath}})}{\psi(f_{\vec{\jmath},\vec{\jmath}})}
					= \frac{\varphi(e_{\vec{\imath},\vec{\jmath}}^{(\cK)} f_{\vec{\jmath},\vec{\imath}})}{\varphi(f_{\vec{\jmath},\vec{\imath}} e_{\vec{\imath},\vec{\jmath}}^{(\cK)})}
					= \prod_{v \in \cK} \frac{t_{i(v)}^{(v)}}{t_{j(v)}^{(v)}}
					= \frac{[\otimes_{v \in \cK} \psi_v](e_{\vec{\imath},\vec{\imath}}^{(\cK)})}{[\otimes_{v \in \cK} \psi_v](e_{\vec{\jmath},\vec{\jmath}}^{(\cK)})}.
					\]
					Thus, the values of $\psi$ and the values of $\bigotimes_{v \in \cK} \psi_v$ on the corresponding projections in the respective algebras are proportional.  Hence, the values on all the matrix units are proportional since the states vanish on the off-diagonal matrix units.  Because the states are normalized, we conclude that $(N,\psi)$ is isomorphic to $\bigotimes_{v \in \cK} (\mathbb{M}_{n(v)},\psi_v)$.
				\end{proof}
				
				\begin{lemma}
					Suppose $q_{\vec{\imath}} \neq 0$.  Then $N$ is a finite-dimensional two-sided ideal in $M$, and hence a direct summand.  Let $\pi_v: M_v \to \mathbb{M}_{n(v)}$ and $\pi: M \to N$ be the quotient maps associated to the direct sum decompositions, and let
					\[
					\kappa_v: \mathbb{M}_{n(v)} \to \bigotimes_{v \in \cV} \mathbb{M}_{n(v)} \cong N
					\]
					be the map which tensors its input with $\bigotimes_{v' \in \cV\setminus\set v}1_{\bM_{n(v')}}$, and then applies the isomorphism from Lemma~\ref{lem: construction of f d summand and state}.
					Then $\pi \circ \iota_v = \kappa_v \circ \pi_v$.
				\end{lemma}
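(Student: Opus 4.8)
The plan is to show that left multiplication by any element of any $M_v$ carries $N$ into $N$; this will make $N$ a two-sided ideal, hence $N = Mz$ for a central projection $z = 1_N$, and the identity $\pi \circ \iota_v = \kappa_v \circ \pi_v$ will then follow by evaluating both sides on the matrix units $f_{\vec{\imath},\vec{\jmath}}$.

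First I would record some consequences of the definitions. Since $q_{\vec{\imath}} \le p_{\vec{\imath}} \le e_{i(v),i(v)}^{(v)}$ for every $v \in \cV$ (in particular $q_{\vec{\imath}} \le e_{\vec{\imath},\vec{\imath}}^{(\cK)}$), taking $\vec{k} = \vec{\jmath}$, respectively $\vec{k} = \vec{\imath}$, in the definition of $f_{\vec{\imath},\vec{\jmath}}$ gives
\[
f_{\vec{\imath},\vec{\jmath}} = e_{\vec{\imath},\vec{\jmath}}^{(\cK)} q_{\vec{\jmath}} = q_{\vec{\imath}} e_{\vec{\imath},\vec{\jmath}}^{(\cK)}, \qquad f_{\vec{\imath},\vec{\imath}} = q_{\vec{\imath}},
\]
the last equality by the final identity of Notation~\ref{nota: fd summands in gp 2}. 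I also recall that $\cK$ is a clique (the hypothesis $q_{\vec{\imath}} \ne 0$, hence $q_{\vec{\imath}} \ne 0$ for all $\vec{\imath}$, forces $p_{\vec{\imath}} \ne 0$, so the lemma following Notation~\ref{nota: fd summands in gp} applies), so the factors of $e_{\vec{\imath},\vec{\jmath}}^{(\cK)}$ pairwise commute and $M_v$ commutes with $M_w$ for distinct $v, w \in \cK$; and that $N$ is finite-dimensional by Lemma~\ref{lem: construction of f d summand and state}.

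The main step is the claim that for every $v \in \cV$ and $x \in M_v$,
\[
\iota_v(x)\, f_{\vec{\imath},\vec{\jmath}} = \kappa_v(\pi_v(x))\, f_{\vec{\imath},\vec{\jmath}} \qquad \text{for all } \vec{\imath}, \vec{\jmath} \in [\vec n].
\]
For $v \in \cK$, I would write $e_{\vec{\imath},\vec{k}}^{(\cK)} = e_{i(v),k(v)}^{(v)}\, b$ with $b = \prod_{w \in \cK \setminus \{v\}} e_{i(w),k(w)}^{(w)}$, which commutes with $M_v$; decomposing $x = \pi_v(x) \oplus x'$ along $M_v = \mathbb{M}_{n(v)} \oplus M_v'$ and using that $e_{i(v),k(v)}^{(v)}$ lies in the matrix summand gives $\iota_v(x)\, e_{i(v),k(v)}^{(v)} = \pi_v(x)\, e_{i(v),k(v)}^{(v)} = \sum_a (\pi_v(x))_{a,i(v)}\, e_{a,k(v)}^{(v)}$, and substituting into $f_{\vec{\imath},\vec{\jmath}} = e_{\vec{\imath},\vec{k}}^{(\cK)} q_{\vec{k}} e_{\vec{k},\vec{\jmath}}^{(\cK)}$ yields $\iota_v(x)\, f_{\vec{\imath},\vec{\jmath}} = \sum_a (\pi_v(x))_{a,i(v)}\, f_{\vec{\imath}^{(a)},\vec{\jmath}}$, where $\vec{\imath}^{(a)}$ agrees with $\vec{\imath}$ away from the $v$-coordinate and has $v$-coordinate $a$; under the identification $f_{\vec{\imath},\vec{\jmath}} \leftrightarrow e_{\vec{\imath},\vec{\jmath}}^{(\cK)}$ of Lemma~\ref{lem: construction of f d summand and state}, this is precisely $\kappa_v(\pi_v(x))\, f_{\vec{\imath},\vec{\jmath}}$. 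For $v \notin \cK$ we have $n(v) = 1$, so $e^{(v)} := e_{1,1}^{(v)}$ is a central projection of $M_v$ with $M_v e^{(v)} = \C e^{(v)}$; hence $x e^{(v)} = e^{(v)} x = \pi_v(x)\, e^{(v)}$, and since $q_{\vec{\imath}} \le e^{(v)}$,
\[
\iota_v(x)\, f_{\vec{\imath},\vec{\jmath}} = \iota_v(x)\, q_{\vec{\imath}}\, e_{\vec{\imath},\vec{\jmath}}^{(\cK)} = \pi_v(x)\, q_{\vec{\imath}}\, e_{\vec{\imath},\vec{\jmath}}^{(\cK)} = \pi_v(x)\, f_{\vec{\imath},\vec{\jmath}} = \kappa_v(\pi_v(x))\, f_{\vec{\imath},\vec{\jmath}},
\]
using that $\kappa_v$ is the unital embedding $\C \hookrightarrow N$. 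I expect this second case to be the only delicate point: here $v$ need not be adjacent to all of $\cK$, so $\iota_v(x)$ does not commute with $e_{\vec{\imath},\vec{\jmath}}^{(\cK)}$, and the trick is to absorb $x$ into the scalar $\pi_v(x)$ via $q_{\vec{\imath}} \le e^{(v)}$ before it ever meets $e_{\vec{\imath},\vec{\jmath}}^{(\cK)}$.

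Finally I would conclude as follows. The set $A = \{y \in M : y N \subseteq N\}$ is a WOT-closed subalgebra of $M$ (WOT-closed because $N$, being finite-dimensional, is a WOT-closed subspace and multiplication is separately WOT-continuous) and contains every $M_v$ by the claim; since $N = N^*$, the same applies to $A^*$, so $A \cap A^*$ is a WOT-closed $\ast$-subalgebra containing $\bigcup_v M_v$, hence equals $M$. Thus $N$ is a left ideal, and being self-adjoint it is a two-sided ideal; by the standard structure of WOT-closed two-sided ideals, $N = Mz$ for a central projection $z$, which must be $z = 1_N = \sum_{\vec{\imath}} f_{\vec{\imath},\vec{\imath}}$ since $1_N \in N$. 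In particular $N$ is a direct summand and $\pi(y) = yz$, so by the claim
\[
\pi(\iota_v(x)) = \iota_v(x) \sum_{\vec{\imath}} f_{\vec{\imath},\vec{\imath}} = \kappa_v(\pi_v(x)) \sum_{\vec{\imath}} f_{\vec{\imath},\vec{\imath}} = \kappa_v(\pi_v(x))\, 1_N = \kappa_v(\pi_v(x)),
\]
which is the desired identity.
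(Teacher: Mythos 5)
Your proof is correct and follows essentially the same route as the paper's: reduce to showing $N$ is a left $M_v$-module for each $v$, split $x$ across the direct-sum decomposition $M_v = \mathbb{M}_{n(v)} \oplus M_v'$, use the range-projection bound $q_{\vec{\imath}} \le p_{\vec{\imath}} \le 1_{\mathbb{M}_{n(v)}}$ to kill the complementary summand, and handle the cases $v \in \cK$ (where $e_{\vec{\imath},\vec{k}}^{(\cK)}$ splits off a factor in $M_v$ and the matrix coefficients of $\pi_v(x)$ act by shifting the $v$-coordinate of $\vec{\imath}$) and $v \notin \cK$ (where $\pi_v(x)$ is scalar) separately. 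The only cosmetic differences are that you vary the intermediate index $\vec{k}$ rather than fixing $\vec{k}=\vec{1}$, and you deduce the intertwining identity $\pi \circ \iota_v = \kappa_v \circ \pi_v$ by multiplying by $1_N$ rather than by the module-map-agreeing-at-$1$ argument; both are fine.
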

				
				\begin{proof}
					First, we show that $N$ is a two-sided ideal.  Since $N$ is closed under adjoints it suffices to show it is a left ideal.  It further suffices to show that it is a left $M_v$-module for each $v \in \cV$.  Let $v \in \cV$ and $x \in M_v$.  Let $1_{\mathbb{M}_{n(v)}}$ be the unit in $\mathbb{M}_{n(v)}$.  Thus,
					\[
					x = x(1 - 1_{\mathbb{M}_{n(v)}}) + \pi_v(x) 1_{\mathbb{M}_{n(v)}},
					\]
					where we view $\pi_v(x) \in \mathbb{M}_{n(v)} \subseteq M_v$ non-unitally.  Take one of the matrix units $f_{\vec{\imath},\vec{\jmath}}$ from $N$.  Its range projection is $q_{\vec{\imath}} \leq p_{\vec{\imath}} \leq 1_{\mathbb{M}_{n(v)}}$. Thus,
					\[
					x f_{\vec{\imath},\vec{\jmath}} = 0 + \pi_v(x) f_{\vec{\imath},\vec{\jmath}}.
					\]
					In the case where $v \in \cV \setminus \cK$, we are already done because $\pi_v(x)$ is a scalar multiple of $1_{\mathbb{M}_{n(v)}}$ and so acts by the identity on $f_{\vec{\imath},\vec{\jmath}}$.  Otherwise, suppose that $v \in \cK$, and write
					\[
					\pi_v(x) = \sum_{a,b \in [n(v)]} \gamma_{a,b} e_{a,b}^{(v)}.
					\]
					Then
					\begin{equation} \label{eq: left module computation}
						x f_{\vec{\imath},\vec{\jmath}} = \sum_{a,b \in [n(v)]} \gamma_{a,b} e_{a,b}^{(v)} e_{\vec{\imath},\vec{1}}^{(\cK)} q_{\vec{1}} e_{\vec{1},\vec{\jmath}}^{(\cK)} = \sum_{a,b \in [n(v)]} \gamma_{a,b} \mathbf{1}_{b=i(v)} e_{\operatorname{swap}_v^a(\vec{\imath}),\vec{1}}^{(\cK)} q_{\vec{1}} e_{\vec{1},\vec{\jmath}}^{(\cK)} \in N.
					\end{equation}
					Hence, $N$ is a left ideal as desired.
					
					Next, we show that $\pi \circ \iota_v = \kappa_v \circ \pi_v$ using the appropriate left $M_v$-module structures on each of the algebras in question.  Of course, $M_v$ is viewed as a left module over itself, and $M$ is viewed as a left $M_v$-module using the algebra inclusion $\iota_v$.  Moreover, since $N$ is a direct summand and hence an ideal in $M$, it is in particular a left $M_v$-submodule, and the projection $\pi: M \to N$ is a left $M_v$-module map, and so $\pi \circ \iota_v$ is a left $M_v$-module map.  Now turning our attention to $\kappa_v \circ \pi_v$, note that $N_v$ is an ideal in $M_v$ and the projection $\pi_v$ is an $M_v$-module map.  The computation \eqref{eq: left module computation} shows that $\kappa_v$ is a left $M_v$-module map, and so $\kappa_v \circ \pi_v$ is a left $M_v$-module map.  Overall, the maps $\pi \circ \iota_v$ and $\kappa_v \circ \pi_v$ are both left $M_v$-module maps and they agree at $1$, hence they are equal.
				\end{proof}
				
				\subsection{Fock space computations} \label{subsec: Fock space}
				
				In this subsection, we continue with the same setup as \S \ref{subsec: construction of matrix units}, and in particular, we retain the assumption $\cK = \{v \in \cV: n(v) > 1\}$ is a clique.  In the next lemma, we give an explicit description of the range of $q_{\vec{\imath}}$ in terms of the GNS Hilbert space associated to the graph product.  It turns out that we need to study $\prod_{v \in \cK} n(v)$ many vectors simultaneously, and we introduce an auxiliary copy of $\bigotimes_{v \in \cV} \mathbb{M}_{n(v)}$ to assist with book-keeping in the statements.
				
				\begin{notation}
					For $v \in \cV$, let
					\[
					\varepsilon_{i,j}^{(v)} = e_{i,j}^{(v)} \otimes \bigotimes_{v' \in \cV \setminus \{v\}} 1 \in \bigotimes_{v \in \cV} \mathbb{M}_{n(v)}.
					\]
					Moreover, for $\vec{\imath} \in [\vec n]$, let
					\[
					\delta_{\vec{\imath}} = \bigotimes_{v \in \cV} \delta_{i(v)} \in \bigotimes_{v \in \cV} \mathbb{C}^{n(v)},
					\]
					where $\delta_i$ denotes the standard basis vector.
				\end{notation}
				
				\begin{lemma}
					\label{lem: master vector lemma}
					Let $H_v = L^2(M_v,\varphi_v)$ and $H$ be the graph product Hilbert space, which we may identify with $L^2(M,\varphi)$.
					%Let $\mathcal{W}$ be a set of equivalence class representatives for the reduced words over $\cG$ (including the empty word), and let $\mathcal{W}_\ell$ be the subset of $\mathcal{W}$ of words of length $\ell$.
					%Let $\vec{1} = (1,\dots,1)$, and
					Let $\xi \in \Ran(q_{\vec{1}})$, and set
					\[
					\xi_{\vec{\imath}} = e_{\vec{\imath},\vec{1}}^{(\cK)} \xi.
					\]
					We have
					\begin{equation} \label{eq: master vector setup}
						\xi_{\vec{\imath}} \in \Ran(q_{\vec{\imath}}), \qquad e_{\vec{\imath},\vec{\jmath}}^{(\cK)} \xi_{\vec{\jmath}} = \xi_{\vec{\imath}}.
					\end{equation}
					Let $\alpha_{\vec{\imath}} = \ip{\Omega,\xi_{\vec{\imath}}}$.  Let
					\[
					\mathring{e}_{i,j}^{(v)} = e_{i,j}^{(v)} - \varphi(e_{i,j}^{(v)}) 1 = e_{i,j}^{(v)} - \mathbf{1}_{i=j} t_i^{(v)} 1.
					\]
					Then viewing each $\mathring{e}_{a,b}^{(w)}$ as a vector in $L^2(M_w)$ (and omitting the hat) we have
					\begin{equation} \label{eq: master vector formula}
						\xi_{\vec{\imath}} =
						\sum_{\vec{\jmath} \in [\vec n]} \alpha_{\vec{\jmath}} \sum_{\ell \in \bN_0} \sum_{w \in \mathcal{W}_\ell} \sum_{\substack{a_1,b_1 \in [n(w_1)] \\ \vdots \\ a_{\ell}, b_{\ell} \in [n(w_\ell)]}} \ip{\delta_{\vec{\imath}}, \varepsilon_{a_1,b_1}^{(w_1)} \dots \varepsilon_{a_\ell,b_\ell}^{(w_\ell)} \delta_{\vec{\jmath}}} \, \frac{1}{t_{b_1}^{(w_1)} \dots t_{b_\ell}^{(w_\ell)}} \, \mathring{e}_{a_1,b_1}^{(w_1)} \otimes \dots \otimes \mathring{e}_{a_\ell,b_\ell}^{(w_\ell)}.
					\end{equation}
					%Here we view $\mathring{e}_{a_j,b_j}^{(w_j)}$ as a vector in $L^2(M_{w_j})$, but we omit the hat for ease of notation.
					When $\ell = 0$ and $w$ is the empty word, we interpret the sum as containing a single term $\ang{\delta_{\vec\imath}, \delta_{\vec\jmath}}\Omega$, corresponding to the unique choice of $0$-tuples for $a$ and $b$.  The set of terms appearing in the sum in \eqref{eq: master vector formula}, namely
					\[
					\set*{
						\ip{\delta_{\vec{\imath}}, \varepsilon_{a_1,b_1}^{(w_1)} \dots \varepsilon_{a_\ell,b_\ell}^{(w_\ell)} \delta_{\vec{\jmath}}} \,
						\frac{1}{t_{b_1}^{(w_1)} \dots t_{b_\ell}^{(w_\ell)}} \,
						\mathring{e}_{a_1,b_1}^{(w_1)} \otimes \dots \otimes \mathring{e}_{a_\ell,b_\ell}^{(w_\ell)}
						: \vec\jmath\in[\vec n]; \ell\in\bN_0; w\in\cW_\ell; a_1, b_1 \in [n(w_1)]; \ldots; a_\ell, b_\ell \in [n(w_\ell)]
					},
					\]
					is orthogonal.
					
					Conversely, given coefficients $\alpha_{\vec \imath}$, if the sums \eqref{eq: master vector formula} converge, then the vectors $\xi_{\vec \imath}$ given by \eqref{eq: master vector formula} satisfy \eqref{eq: master vector setup}.
					
					%    On the other hand if $(\xi_{\vec\imath})_{\vec\imath \in [\vec n]}$ are vectors in $L^2(M, \varphi)$ satisfying $e_{\vec{\imath},\vec{\jmath}}^{(\cK)} \xi_{\vec{\jmath}} = \xi_{\vec{\imath}}$ and $\alpha_{\vec\imath} = \ang{\Omega, \xi_{\vec\imath}}$ are such that \eqref{eq: master vector formula} holds, then $\xi_{\vec\imath} \in \Ran(q_{\vec\imath})$ for each $\vec\imath \in [\vec n]$.
				\end{lemma}
				
				\begin{proof}
					We adopt the notation related to the graph product Hilbert space $H$ described in \S\ref{subsec: graph products}.
					%Let $H_v^{\circ}$ denote the orthogonal complement of $\hat 1$ in $H_v$, and take $H$ to be the graph product Hilbert space
					Write
					\[
					\xi_{\vec{\imath}} = \bigoplus_{w \in \mathcal{W}} \xi_{\vec{\imath},w}, \qquad \xi_{\vec{\imath},w} \in H_{w}^\circ.
					\]
					For convenience, if a $\cG$-reduced word $w'$ is equivalent to a word $w \in \mathcal{W}$, we write $\xi_{\vec\imath,w'}$ for the vector $\xi_{\vec{\imath},w}$ with the tensorands permuted according to the rearrangement that transforms $w$ into $w'$.
					
					We want to establish the formula for $\xi_{\vec{\imath},w}$ by induction on the length of $w$ using the relations $e^{(\cK)}_{\vec{\imath},\vec{\jmath}} \xi_{\vec{\jmath}} = \xi_{\vec{\imath}}$.
					We first describe the actions of the matrix units $e_{a,b}^{(v)}$ on $\xi_{\vec{\jmath}}$.  First, suppose that $v \in \cK$. 
					Since
					\[
					\xi_{\vec{\jmath}} \in \Ran(q_{\vec{\jmath}}) \subseteq \Ran(e_{\vec{\jmath},\vec\jmath}^{(\cK)}),
					\]
					we have
					\[
					e_{a,b}^{(v)} \xi_{\vec{\jmath}} = e_{a,b}^{(v)} e_{\vec{\jmath},\vec\jmath}^{(\cK)} \xi_{\vec{\jmath}} = \mathbf{1}_{b=j(v)} e_{\operatorname{swap}_{v}^{a}(\vec{\jmath}),\vec\jmath}^{(\cK)} \xi_{\vec{\jmath}} = \mathbf{1}_{b=j(v)} \xi_{\operatorname{swap}_{v}^{a}(\vec{\jmath})}.
					\]
					On the other hand, for $v \in \cV\setminus\cK$ we have $n(v) = 1$ so $a=b=1$ and $\xi_{\vec \jmath} \in \Ran(q_{\vec\jmath})\subseteq \Ran(p_{\vec\jmath}) \subseteq \Ran(e_{1,1}^{(v)})$ so
					\[e_{1,1}^{(v)} \xi_{\vec \jmath} = \xi_{\vec \jmath} = \mathbf{1}_{1 = j(v)}\xi_{\operatorname{swap}_v^1(\vec\jmath)}.\]
					Hence for all $v \in \cV$ we have
					\begin{equation} \label{eq: elementary action}
						e_{a,b}^{(v)} \xi_{\vec{\jmath}} = \mathbf{1}_{b=j(v)} \xi_{\operatorname{swap}_{v}^{a}(\vec{\jmath})}.
					\end{equation}
					
					Now, for each $w \in \cW$ for which $vw$ is reduced, the unitary isomorphism $U_v$ from \S\ref{subsec: graph products} carries $H_w^\circ \oplus H_{vw}^\circ$ onto $H_v \otimes H_w^\circ$, and the action of $e_{a,b}^{(v)}$ on $H$ corresponds to that of $e_{a,b}^{(v)} \otimes 1$.
					In particular, each $H_v \otimes H_w^{\circ}$ is invariant under the action of each $e_{a,b}^{(v)}\otimes 1$, and so \eqref{eq: elementary action} yields
					\begin{equation}\label{eq: elementary action 2}
						(e_{a,b}^{(v)}\otimes 1)U_v(\xi_{\vec\imath, w} \oplus \xi_{\vec\imath, vw}) = \mathbf{1}_{b=i(v)}U_v(\xi_{\operatorname{swap}_v^a(\vec\imath), w} \oplus \xi_{\operatorname{swap}_v^a(\vec\imath), vw}).
					\end{equation}
					Moreover, specializing the above to the case $a = b = i(v)$ or simply noting that $\xi_{\vec\imath} \in \Ran(q_{\vec\imath}) \subseteq \Ran(e_{i(v), i(v)}^{(v)})$ and applying the same decomposition argument, we see
					\[
					U_v(\xi_{\vec\imath, w} \oplus \xi_{\vec\imath, vw}) \in \Ran(e_{i(v), i(v)}^{(v)}\otimes 1).
					\]
					%Since $\xi_{\vec\imath} = e_{i(v), i(v)}^{(v)} \xi_{\vec\imath}$ (for $\xi_{\vec\imath} \in \Ran(q_{\vec\imath}) \subseteq \Ran(e_{i(v), i(v)}^{(v)})$) and the spaces $H_v\otimes H_w^\circ$ are invariant under $(e_{i(v), i(v)}^{(v)}\otimes 1)$ we have \[(e_{i(v ), i(v)}^{(v)} \otimes 1)U_v^*(\xi_{\vec\imath, w} \oplus \xi_{\vec\imath, vw}) = U_v^*( \xi_{\vec\imath, w} \oplus \xi_{\vec\imath, vw}).\]
					
					%    Now fix a reduced word $w$ and a vertex $v$ such that $vw$ is reduced.
					%    By assumption, we have
					%    \[
					%      \xi_{\vec{\imath},w} \oplus \xi_{\vec{\imath},vw} \in H_w^\circ \oplus H_{vw}^\circ \cong (\C \oplus H_v^\circ) \otimes H_w^\circ \cong H_v \otimes H_w^\circ.
					%    \]
					%    Let $\Phi_{v,w}$ be the isomorphism $H_w^\circ \oplus H_{vw}^\circ \to H_v \otimes H_w^{\circ}$.  By \eqref{eq: elementary action} and definition of the graph product, we have
					%    \begin{equation} \label{eq: elementary action 2}
						%      (e_{a,b}^{(v)} \otimes 1) \Phi_{v,w}[\xi_{\vec{\imath},w} \oplus \xi_{\vec{\imath},vw}] = \mathbf{1}_{b=i(v)} \Phi_{v,w}[\xi_{\operatorname{swap}_v^{a}(\vec{\imath}),w} \oplus \xi_{\operatorname{swap}_v^{a}(\vec{\imath}),vw}].
						%    \end{equation}
					%    In particular, taking $a = b = i(v)$, we have
					%    \[
					%      (e_{i(v),i(v)}^{(v)} \otimes 1) \Phi_{v,w}[\xi_{\vec{\imath},w} \oplus \xi_{\vec{\imath},vw}] = \mathbf{1}_{b=i(v)} \Phi_{v,w}[\xi_{\vec{\imath},w} \oplus \xi_{\vec{\imath},vw}],
					%    \]
					%    or
					%    \[
					%      \Phi_{v,w}[\xi_{\vec{\imath},w} \oplus \xi_{\vec{\imath},vw}] \in \Ran(e_{i(v),i(v)}^{(v)} \otimes 1).
					%    \]
					
					Now $\Ran(e_{i(v),i(v)}^{(v)})$ in $H_v = L^2(M_v,\varphi_v)$ (understood as acting on the left) is spanned by $(e_{i(v),b}^{(v)})_{b=1}^{n(v)}$.
					Therefore, there are some vectors $\eta_{\vec{\imath},b,w,v} \in H_w^\circ$ such that
					\begin{equation} \label{eq: xi decomposition}
						U_v(\xi_{\vec{\imath},w} \oplus \xi_{\vec{\imath},vw}) = \sum_{b \in [n(v)]} e_{i(v),b}^{(v)} \otimes \eta_{\vec{\imath},b,w,v}.
					\end{equation}
					We rewrite this again in terms of $H_w^\circ \oplus H_{vw}^\circ$ by recalling
					\[
					e_{i(v),b}^{(v)} = \varphi(e_{i(v),b}^{(v)}) 1 + \mathring{e}_{i(v),b}^{(v)} \in \C \oplus H_v^\circ,
					\]
					and $\varphi(e_{i(v),b}^{(v)}) = \mathbf{1}_{i(v)=b} t_{i(v)}^{(v)}$.  Thus,
					\begin{equation} \label{eq: some sums equal}
						\xi_{\vec{\imath},w} \oplus \xi_{\vec{\imath},vw} = t_{i(v)}^{(v)} \eta_{\vec{\imath},i(v),w,v} \oplus \sum_{b \in [n(v)]} \mathring{e}_{i(v),b}^{(v)} \otimes \eta_{\vec{\imath},b,w,v},
					\end{equation}
					whence
					\begin{equation} \label{eq: first eta equation}
						\eta_{\vec{\imath},i(v),w,v} = \frac{1}{t_{i(v)}^{(v)}} \xi_{\vec{\imath},w}. 
					\end{equation}
					We now wish to evaluate the terms $\eta_{\vec{\imath},b,w,v}$ for $b \neq i(v)$.
					Take $a = i(v)$ and let $\vec{\jmath} = \operatorname{swap}_v^b(\vec\imath)$; note that
					\[
					\vec{\imath} = \operatorname{swap}_v^a(\vec{\jmath}).
					\]
					From \eqref{eq: elementary action 2} and \eqref{eq: xi decomposition}, we have
					\[
					\sum_{c \in [n(v)]} e_{i(v),c}^{(v)} \otimes \eta_{\vec{\imath},c,w,v} = (e_{a,b}^{(v)} \otimes 1) \sum_{c \in [n(v)]} e_{j(v),c}^{(v)} \otimes \eta_{\vec{\jmath},c,w,v} = \sum_{c \in [n(v)]} e_{i(v),c}^{(v)} \otimes \eta_{\vec{\jmath},c,w,v}.
					\]
					Therefore, $\eta_{\vec{\imath},c,w,v} = \eta_{\vec{\jmath},c,w,v}$, and in particular taking $k = b = j(v)$,
					\[
					\eta_{\vec{\imath},b,w,v} = \eta_{\vec{\jmath},j(v),w,v} = \frac{1}{t_{j(v)}^{(v)}} \xi_{\vec{\jmath},w} = \frac{1}{t_b^{(v)}} \xi_{\operatorname{swap}_v^b(\vec{\imath}),w}.
					\]
					Therefore, overall, substituting this back into \eqref{eq: some sums equal}, we get
					\begin{equation} \label{eq: individual term preparation}
						\xi_{\vec{\imath},vw} = \sum_{b \in [n(v)]}  \mathring{e}_{i(v),b}^{(v)} \otimes \frac{1}{t_b^{(v)}} \xi_{\operatorname{swap}_v^b(\vec{\imath}),w}.
					\end{equation}
					We deduce that
					\begin{equation} \label{eq: sum prepared for ind hyp}
						\xi_{\vec{\imath},vw} = \sum_{a, b \in [n(v)]} \sum_{\vec{\jmath} \in [\vec n]} \ip{\delta_{\vec{\imath}}, \varepsilon_{a,b}^{(v)} \delta_{\vec{\jmath}}} \frac{1}{t_b^{(v)}} \mathring{e}_{a,b}^{(v)} \otimes  \xi_{\vec{\jmath},w}
					\end{equation}
					because $\ip{\delta_{\vec\imath}, \varepsilon_{a,b}^{(v)} \delta_{\vec\jmath}}$ evaluates to the indicator function that $a = i(v)$ and $\vec\jmath = \operatorname{swap}_b^v(\vec{\imath})$.
					
					We are now ready to show by induction that for reduced words $w$ of length $\ell$,
					\begin{equation} \label{eq: master vector formula inductive}
						\xi_{\vec{\imath},w} = \sum_{\vec{\jmath} \in [\vec n]} \alpha_{\vec{\jmath}} \sum_{\substack{a_1,b_1 \in [n(w_1)] \\ \vdots \\ a_{\ell}, b_{\ell} \in [n(w_\ell)]}} \ip{\delta_{\vec{\imath}}, \varepsilon_{a_1,b_1}^{(w_1)} \dots \varepsilon_{a_\ell,b_\ell}^{(w_\ell)} \delta_{\vec{\jmath}}} \, \frac{1}{t_{b_1}^{(w_1)} \dots t_{b_\ell}^{(w_\ell)}} \, \mathring{e}_{a_1,b_1}^{(w_1)} \otimes \dots \otimes \mathring{e}_{a_\ell,b_\ell}^{(w_\ell)}.
					\end{equation}
					The base case is when $w$ is the empty word, and in this case the formula holds trivially by definition of $\alpha_{\vec{\imath}}$ thanks to our interpretation of the sum over no choices of $a$ or $b$ as $\ip{\delta_{\vec\imath},\delta_{\vec\jmath}}\Omega$.
					Now suppose the formula holds for a reduced word $w$ and that $v \in \cV$ such that $vw$ is reduced.  Then by \eqref{eq: sum prepared for ind hyp} and the induction hypothesis,
					\begin{align}
						\xi_{\vec{\imath},vw} &= \sum_{\vec{k} \in [\vec n]} \sum_{a, b \in [n(v)]} \ip{\delta_{\vec{\imath}}, \varepsilon_{a,b}^{(v)} \delta_{\vec{k}}} \frac{1}{t_b^{(v)}} \mathring{e}_{a,b}^{(v)} \otimes  \xi_{\vec{k},w} \label{eq: horrid vector computation} \\
						&= \sum_{\vec{k} \in [\vec n]} \sum_{a, b \in [n(v)]} \ip{\delta_{\vec{\imath}}, \varepsilon_{a,b}^{(v)} \delta_{\vec{k}}} \frac{1}{t_b^{(v)}} \mathring{e}_{a,b}^{(v)} \otimes \nonumber \\
						& \qquad \sum_{\vec{\jmath} \in [\vec n]} \alpha_{\vec{\jmath}} \sum_{\substack{a_1,b_1 \in [n(w_1)] \\ \vdots \\ a_{\ell}, b_{\ell} \in [n(w_\ell)]}} \ip{\delta_{\vec{k}}, \varepsilon_{a_1,b_1}^{(w_1)} \dots \varepsilon_{a_\ell,b_\ell}^{(w_\ell)} \delta_{\vec{\jmath}}} \, \frac{1}{t_{b_1}^{(w_1)} \dots t_{b_\ell}^{(w_\ell)}} \, \mathring{e}_{a_1,b_1}^{(w_1)} \otimes \dots \otimes \mathring{e}_{a_\ell,b_\ell}^{(w_\ell)} \nonumber \\
						&= \sum_{\vec{\jmath} \in [\vec n]} \alpha_{\vec{\jmath}} \sum_{a, b \in [n(v)]} \sum_{\substack{a_1,b_1 \in [n(w_1)] \\ \vdots \\ a_{\ell}, b_{\ell} \in [n(w_\ell)]}} \left( \sum_{\vec{k} \in [\vec n]} \ip{\delta_{\vec{\imath}}, \varepsilon_{a,b}^{(v)} \delta_{\vec{k}}} \ip{\delta_{\vec{k}}, \varepsilon_{a_1,b_1}^{(w_1)} \dots \varepsilon_{a_\ell,b_\ell}^{(w_\ell)} \delta_{\vec{\jmath}}} \right)  \nonumber \\
						& \qquad  \frac{1}{t_b^{(v)} t_{b_1}^{(w_1)} \dots t_{b_\ell}^{(w_\ell)}} \, \mathring{e}_{a,b}^{(v)} \otimes \mathring{e}_{a_1,b_1}^{(w_1)} \otimes \dots \otimes \mathring{e}_{a_\ell,b_\ell}^{(w_\ell)} \nonumber \\
						&= \sum_{\vec{\jmath} \in [\vec n]} \alpha_{\vec{\jmath}} \sum_{a, b \in [n(v)]} \sum_{\substack{a_1,b_1 \in [n(w_1)] \\ \vdots \\ a_{\ell}, b_{\ell} \in [n(w_\ell)]}} \left( \ip{\delta_{\vec{\imath}}, \varepsilon_{a,b}^{(v)} \varepsilon_{a_1,b_1}^{(w_1)} \dots \varepsilon_{a_\ell,b_\ell}^{(w_\ell)} \delta_{\vec{\jmath}}} \right)  \nonumber \\
						& \qquad  \frac{1}{t_b^{(v)} t_{b_1}^{(w_1)} \dots t_{b_\ell}^{(w_\ell)}} \, \mathring{e}_{a,b}^{(v)} \otimes \mathring{e}_{a_1,b_1}^{(w_1)} \otimes \dots \otimes \mathring{e}_{a_\ell,b_\ell}^{(w_\ell)}.
					\end{align}
					This shows the claim for $vw$ and hence completes the induction.

					Next, we show the claimed orthogonality.  Of course, terms associated to two different words in $\cW$ are orthogonal by construction of $H$.
					Now consider two distinct terms associated to the same word $w \in \cW_\ell$, and suppose they correspond to choices $\vec\jmath \in [\vec n]$, and $a_t, b_t \in [n(t)]$; and ${\vec\jmath\,}' \in [\vec n]$, and $a_t', b_t' \in [n(t)]$ respectively.
					If either term vanishes, there is nothing to prove, so we may assume the inner products weighting them are non-zero.
					Hence we have
					\[
					\delta_{\vec\jmath} = (\varepsilon_{a_{\ell},b_{\ell}}^{(w_{\ell})})^* \dots (\varepsilon_{a_1,b_1}^{(w_1)})^* \delta_{\vec{\imath}}, \qquad
					\delta_{{\vec\jmath\,}'} = (\varepsilon_{a_{\ell}',b_{\ell}'}^{(w_{\ell})})^* \dots (\varepsilon_{a_1',b_1'}^{(w_1)})^* \delta_{\vec{\imath}}
					\]
					In particular, if all $a_t = a_t'$ and all $b_t = b_t'$ then $\vec\jmath = {\vec\jmath\,}'$ as well; since the terms are distinct, this cannot happen.
					Thus there is some minimum $1 \leq m \leq \ell$ so that $(a_m, b_m) \neq (a_m', b_m')$, and so there is some $\vec{k}$ such that
					\[
					\delta_{\vec{k}} = (\varepsilon_{a_{m-1},b_{m-1}}^{(w_{m-1})})^* \dots (\varepsilon_{a_1,b_1}^{(w_1)})^* \delta_{\vec{\imath}} = (\varepsilon_{a_{m-1}',b_{m-1}'}^{(w_{m-1})})^* \dots (\varepsilon_{a_1',b_1'}^{(w_1)})^* \delta_{\vec{\imath}}.
					\]
					We then see that $(\varepsilon_{a_m,b_m}^{(w_m)})^* \delta_{\vec{k}} = 0$ unless $a_m = k(w_m)$, and similarly $(\varepsilon_{a_m,b_m}^{(w_m)})^* \delta_{\vec{k}} = 0$ unless $a_m' = k(w_m)$.
					Hence, for the terms to be nonzero, we need $a_m = a_m'$, so $b_m \neq b_m'$.
					In particular, one of $e_{a_m,b_m}^{(w_m)}$ or $e_{a_m',b_m'}^{(w_m)}$ must be an off-diagonal matrix unit, which is in the kernel of the state.
					Therefore, $\mathring{e}_{a_m,b_m}^{(w_m)}$ and $\mathring{e}_{a_m',b_m'}^{(w_m)}$ are orthogonal.
					Hence also $\mathring{e}_{a_1,b_1}^{(w_1)} \otimes \dots \otimes \mathring{e}_{a_\ell,b_\ell}^{(w_\ell)}$ and $\mathring{e}_{a_1',b_1'}^{(w_1)} \otimes \dots \otimes \mathring{e}_{a_\ell',b_\ell'}^{(w_\ell)}$ are orthogonal.
					
					Finally, we must prove the converse claim.  Since the terms in the sum are orthogonal, convergence of the sum in $L^2$ is independent of the order of summation.  Fix coefficients $\alpha_{\vec \imath}$ for $\vec \imath \in [\vec n]$, assume that the sums in \eqref{eq: master vector formula} converge, and let $\xi_{\vec \imath}$ be the vector given by \eqref{eq: master vector formula}.  Our first goal is to show that \eqref{eq: elementary action} holds.
					
					For $w \in \mathcal{W}$, let $\xi_{\vec \imath,w}$ be given by \eqref{eq: master vector formula inductive}, which is the component of $\xi_{\vec \imath}$ in $H_w^\circ$.  Suppose that $v \neq w(1)$, so that $vw$ is also reduced, or equivalent to a word in $\mathcal{W}$.  The reverse computations of \eqref{eq: horrid vector computation} show that \eqref{eq: sum prepared for ind hyp} holds, and hence also \eqref{eq: individual term preparation}.  Letting
					\[
					\eta_{\vec \imath,b,w,v} = \frac{1}{t_b^{(v)}} \xi_{\operatorname{swap}_v^b(\vec \imath),w},
					\]
					we see that \eqref{eq: some sums equal} holds.  Then since $e_{i(v),b}^{(v)} = t_{i(v)}^v \mathbf{1}_{i(v)=b} t_{i(v)}^v + \mathring{e}_{i(v),b}^{(v)}$, we deduce \eqref{eq: xi decomposition}, that is,
					\[
					U_v(\xi_{\vec \imath,w} \oplus \xi_{\vec \imath,vw}) = \sum_{b' \in [n(v)]} e_{i(v),b'}^{(v)} \otimes \frac{1}{t_{b'}^{(v)}} \xi_{\operatorname{swap}_v^{b'}(\vec \imath),w}.
					\]
					Therefore,
					\begin{align*}
						(e_{a,b}^{(v)} \otimes 1) U_v(\xi_{\vec \imath,w} \oplus \xi_{\vec \imath,vw}) &= \mathbf{1}_{b=i(v)} \sum_{b' \in [n(v)]} e_{a,b'}^{(v)} \otimes \frac{1}{t_{b'}^{(v)}} \xi_{\operatorname{swap}_v^{b'}(\vec \imath),w} \\
						&= \mathbf{1}_{b=i(v)} U_v(\xi_{ \operatorname{swap}_v^a(\vec \imath),w} \oplus \xi_{ \operatorname{swap}_v^a(\vec \imath),vw}).
					\end{align*}
					Now taking the direct sum of the this relation over $w \in \mathcal{W}$, we see that
					\begin{equation} \label{eq: elementary action converse}
						e_{a,b}^{(v)} \xi_{\vec \imath} = \mathbf{1}_{b=i(v)} \xi_{\operatorname{swap}_v^a(\vec \imath)},
					\end{equation}
					that is, \eqref{eq: elementary action} holds.  In particular, this means that $e_{i(v),i(v)}^{(v)} \xi_{\vec \imath} = \xi_{\vec \imath}$ for each $v \in \cV$, and so
					\begin{equation} \label{eq: range relation converse}
						\xi_{\vec \imath} \in \Ran(p_{\vec \imath}).
					\end{equation}
					By applying \eqref{eq: elementary action converse} iteratively for each $v \in \cK$, we also see that
					\[
					\xi_{\vec \imath} = e_{\vec \imath, \vec \jmath}^{(\cK)} \xi_{\vec \jmath}
					\]
					for all $\vec \imath, \vec \jmath \in [\vec n]$.  Combining this with \eqref{eq: range relation converse}, we get
					\[
					\xi_{\vec \imath} = e_{\vec \imath,\vec \jmath}^{(\cK)} \xi_{\vec \jmath} \in \Ran(e_{\vec \imath,\vec \jmath}^{(\cK)} p_{\vec \jmath} e_{\vec \jmath,\vec \imath}^{(\cK)}),
					\]
					and since this holds for all $\vec \jmath$, we finally obtain $\xi_{\vec \imath} \in \Ran(q_{\vec \imath})$ by definition of $q_{\vec \imath}$ (Notation \ref{nota: fd summands in gp 2}).  Thus, \eqref{eq: master vector setup} holds as desired.
				\end{proof}
				
				\begin{lemma} \label{lem: Hilbert space computation of projections}
					For $v \in \cV$, let $s^{(v)} = \sum_{b \in [n(v)]} 1 / t_b^{(v)}$.  Then
					\begin{equation} \label{eq: summation condition}
						q_{\vec{\imath}} \neq 0 \iff \sum_{\ell \in \bN_0} \sum_{w \in \mathcal{W}_\ell} (s^{(w_1)} - 1) \dots (s^{(w_\ell)} - 1) < \infty.
					\end{equation}
					In the case that $q_{\vec{\imath}} \neq 0$, we have
					\begin{equation} \label{eq: state on projection formula}
						\frac{1}{\varphi(q_{\vec{\imath}})} = \sum_{\ell \in \bN_0} \sum_{w \in \mathcal{W}_\ell}(s^{(w_1)}-1) \dots (s^{(w_\ell)} - 1) \prod_{v \in \cV} \frac{1}{t_{i(v)}^{(v)} s^{(v)}}.
					\end{equation}
					%    Furthermore, defining for $v \in \cV$ the matrices,
					%   \[
					%     S^{(v)} = \sum_{a,b \in [n(v)]} \frac{1}{t_{b}^{(v)}} \varepsilon_{a,b}^{(v)} \in \bigotimes_{v \in \cV} \mathbb{M}_{n(v)},
					%   \]
					%   we have
					%   \begin{equation} \label{eq: vector formula for projection}
						%     q_{\vec{\imath}} = \varphi(q_{\vec{\imath}})  \sum_{\ell \in \bN_0} \sum_{w \in \mathcal{W}_\ell} \ip{\delta_{\vec{\imath}}, S^{(w_1)} \dots S^{(w_\ell)} \delta_{\vec{\imath}}} \, \mathring{e}_{a_1,b_1}^{(w_1)} \otimes \dots \otimes \mathring{e}_{a_\ell,b_\ell}^{(w_\ell)},
						%   \end{equation}
				\end{lemma}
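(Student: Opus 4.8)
Our plan is to run Lemma~\ref{lem: master vector lemma} with $\xi$ equal to (the GNS vector of) $q_{\vec{\imath}}$ itself, compute $\|\xi\|^{2}$ using orthogonality, and then identify the resulting sum. Relabelling the matrix units in each $\mathbb M_{n(v)}$, we may assume $\vec{\imath}=\vec{1}$; since all the $q_{\vec{\jmath}}$ vanish together and, by Lemma~\ref{lem: construction of f d summand and state}, $q_{\vec{\jmath}}=f_{\vec{\jmath},\vec{\jmath}}$ with $\varphi(q_{\vec{\jmath}})=\varphi(q_{\vec{1}})\prod_{v\in\cK}t^{(v)}_{j(v)}/t^{(v)}_1$, nothing is lost. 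Suppose $q_{\vec{1}}\ne 0$, put $\xi=q_{\vec{1}}\Omega\in\Ran(q_{\vec{1}})$ and $\xi_{\vec{\imath}}=e^{(\cK)}_{\vec{\imath},\vec{1}}\xi$. From the identity $\varphi(e^{(\cK)}_{\vec{\jmath},\vec{1}}x)=\bigl(\prod_{v\in\cK}t^{(v)}_{j(v)}/t^{(v)}_1\bigr)\varphi(x\,e^{(\cK)}_{\vec{\jmath},\vec{1}})$ established in the proof of Lemma~\ref{lem: construction of f d summand and state}, applied with $x=q_{\vec{1}}$, together with $q_{\vec{1}}e^{(\cK)}_{\vec{\jmath},\vec{\jmath}}=0$ for $\vec{\jmath}\ne\vec{1}$, we get $\alpha_{\vec{\jmath}}:=\ip{\Omega,\xi_{\vec{\jmath}}}=\mathbf{1}_{\vec{\jmath}=\vec{1}}\,\varphi(q_{\vec{1}})$, so only the $\vec{\jmath}=\vec{1}$ term of \eqref{eq: master vector formula} survives.

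Now $\|\xi_{\vec{1}}\|^{2}=\varphi(q_{\vec{1}})$, the surviving terms of \eqref{eq: master vector formula} are pairwise orthogonal, and a one-line computation gives $\|\mathring e^{(v)}_{a,b}\|^{2}=t^{(v)}_b$ for $a\ne b$ and $t^{(v)}_b(1-t^{(v)}_b)$ for $a=b$, so $\|\mathring e^{(v)}_{a,b}\|^{2}/(t^{(v)}_b)^{2} = 1/t^{(v)}_b-\mathbf{1}_{a=b}=:T^{(v)}_{a,b}$. Summing squared norms and cancelling $\varphi(q_{\vec{1}})^{2}$, and carrying out for each vertex $v$ the sum over the indices $(a_t,b_t)$ at the positions where $v$ occurs — constrained by the inner products $\ip{\delta_{\vec{1}},\varepsilon\cdots\varepsilon\,\delta_{\vec{1}}}$ to a chain beginning and ending at $1$, hence evaluating to the $(1,1)$-entry of a power of $T^{(v)}$ — we obtain
\[
\frac{1}{\varphi(q_{\vec{1}})}=\sum_{\ell\in\N_0}\sum_{w\in\mathcal W_\ell}\prod_{v\in\cV}\bigl((T^{(v)})^{m_v(w)}\bigr)_{1,1},
\]
where $m_v(w)$ is the number of occurrences of $v$ in $w$ (and $((T^{(v)})^{0})_{1,1}=1$). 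Writing $T^{(v)}=\mathbf{1}_v\mathbf r_v^{\mathsf T}-I$, where $\mathbf{1}_v$ is the all-ones column vector and $\mathbf r_v=(1/t^{(v)}_b)_b$, and using $\mathbf r_v^{\mathsf T}\mathbf{1}_v=s^{(v)}$, the matrix $T^{(v)}$ has eigenvalue $s^{(v)}-1$ on $\mathbb C\mathbf{1}_v$ and $-1$ on $\ker\mathbf r_v^{\mathsf T}$, so $(T^{(v)})^{m}=(s^{(v)}-1)^{m}P_v+(-1)^{m}(I-P_v)$ with $P_v=\mathbf{1}_v\mathbf r_v^{\mathsf T}/s^{(v)}$; hence $((T^{(v)})^{m})_{1,1}=(s^{(v)}-1)^{m}\rho_v+(-1)^{m}(1-\rho_v)$ with $\rho_v:=1/(t^{(v)}_1 s^{(v)})$. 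Expanding the product over $v$ and interchanging the sums turns the right-hand side into $\sum_{S\subseteq\cV}\bigl(\prod_{v\in S}(1-\rho_v)\prod_{v\notin S}\rho_v\bigr)\sum_{\ell,w}\prod_{v\in S}(-1)^{m_v(w)}\prod_{v\notin S}(s^{(v)}-1)^{m_v(w)}$.

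The key point is that this inner sum vanishes whenever $S\ne\varnothing$. By the Cartier--Foata computation recalled in the introduction and in \S\ref{sec: power series}, the reduced-word series $f((X_v)_v)=\sum_w\prod_t X_{w_t}$ equals $\prod_v(1+X_v)\big/\sum_{\cK\text{ clique}}(-1)^{|\cK|}\prod_{v\in\cK}X_v\prod_{v\notin\cK}(1+X_v)$; setting some $X_{v_0}=-1$ kills the numerator, while the denominator — whose surviving terms are exactly those cliques containing $v_0$ — has nonzero constant term, so $f$ vanishes on any hyperplane $X_{v_0}=-1$. This annihilates all the $S\ne\varnothing$ contributions, leaving $1/\varphi(q_{\vec{1}})=\bigl(\prod_v\rho_v\bigr)\sum_{\ell,w}\prod_t(s^{(w_t)}-1)$; rescaling via $\varphi(q_{\vec{\imath}})=\varphi(q_{\vec{1}})\prod_{v\in\cK}t^{(v)}_{i(v)}/t^{(v)}_1$ and using $t^{(v)}_{i(v)}=t^{(v)}_1$ for $v\notin\cK$ gives \eqref{eq: state on projection formula}.

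It remains to address \eqref{eq: summation condition}. Unless $M_v=\mathbb C$ (a degenerate case that contributes trivially to everything), $0<t^{(v)}_b<1$, so all entries of $(T^{(v)})^{m}$ are positive and both series in sight have non-negative terms; moreover $((T^{(v)})^{m})_{1,1}\le(s^{(v)}-1)^{m}$, and since $((T^{(v)})^{m})_{1,1}/(s^{(v)}-1)^{m}\to\rho_v>0$ there is $R_v<\infty$ with $(1-\rho_v)+\rho_v(s^{(v)}-1)^{m}\le R_v\,((T^{(v)})^{m})_{1,1}$. The first bound yields $\sum_{\ell,w}\prod_t(s^{(w_t)}-1)<\infty\Rightarrow\sum_{\ell,w}\prod_v((T^{(v)})^{m_v(w)})_{1,1}<\infty$; in that case \eqref{eq: master vector formula} with $\alpha_{\vec{\jmath}}=\mathbf{1}_{\vec{\jmath}=\vec{1}}$ defines finite-norm vectors $\xi_{\vec{\imath}}\in H$, and a direct computation identical to the inductive step in the proof of Lemma~\ref{lem: master vector lemma} shows they satisfy $e^{(v)}_{a,b}\xi_{\vec{\imath}}=\mathbf{1}_{b=i(v)}\xi_{\operatorname{swap}^v_a(\vec{\imath})}$ for $v\in\cK$ and $e^{(v)}_{1,1}\xi_{\vec{\imath}}=\xi_{\vec{\imath}}$ for $v\notin\cK$; this forces $\xi_{\vec{1}}\in\Ran(q_{\vec{1}})$ and hence $q_{\vec{1}}\ne 0$. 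Conversely, if $q_{\vec{1}}\ne 0$ the computation above gives $\sum_{\ell,w}\prod_v((T^{(v)})^{m_v(w)})_{1,1}=1/\varphi(q_{\vec{1}})<\infty$, and the second bound then justifies both the Fubini interchange used above and (via Abel summation at $z=1$) the vanishing of the $S\ne\varnothing$ terms, so $\sum_{\ell,w}\prod_t(s^{(w_t)}-1)=\varphi(q_{\vec{1}})^{-1}\prod_v t^{(v)}_1 s^{(v)}<\infty$. The main obstacle is the vanishing in the third paragraph — essentially a Cartier--Foata cancellation — together with the analytic care needed to rearrange the signed double sums; everything else is Fock-space bookkeeping and elementary linear algebra.
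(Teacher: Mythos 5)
Your overall route is genuinely different from the paper's at the crucial computation: the paper never evaluates individual entries of the products $(S^{(w_1)}-1)\cdots(S^{(w_\ell)}-1)$, but instead sums $\prod_v (1/t^{(v)}_{i(v)})\,\norm{\xi_{\vec\imath}}^2$ over $\vec\imath$ (using that $\norm{\xi_{\vec\imath}}$ is independent of $\vec\imath$) so that the left eigenvector $\gamma$ collapses each factor to the scalar $s^{(v)}-1$; every term stays nonnegative and no cancellation is ever needed. You instead compute only the $(1,1)$-entry, diagonalize each rank-one-update matrix $T^{(v)}$, and must then show that all the signed mixed terms (the $S\neq\varnothing$ contributions) cancel. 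That part of your argument, which you yourself flag as the key point, has a gap as written. Knowing that the rational function $f$ has numerator $\prod_v(1+X_v)$ vanishing on $\{X_{v_0}=-1\}$ while the denominator has nonzero constant term only tells you that $f$ vanishes on that hyperplane as a rational function; what you need is that the convergent numerical series sums to $0$ at the specific point $X_v=-1$ ($v\in S$), $X_v=s^{(v)}-1$ ($v\notin S$), and passing from the series to the rational function there requires the denominator to be nonzero at that point. At that point the denominator equals $\prod_{v\notin S}s^{(v)}$ times $\fK_{\cG'}\bigl((1-1/s^{(v)})\bigr)$ for $\cG'$ the subgraph induced by the common neighbours of $S$, and its positivity is not available at this stage of the argument -- it is essentially part of what Lemma \ref{lem: convergence criteria} and the main theorems establish. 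The Abel-summation remark does not repair this: as $z\to1^-$ the numerator vanishes to order $|S|$ and you have no control on the order of vanishing of the denominator, so a $0/0$ limit is not excluded by anything you wrote.

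The cancellation you need is true and can be proved without rational functions, which is the natural patch. First note that only $S\subseteq\cK$ contribute, since $\rho_v=1$ when $n(v)=1$; for $v\in\cK$ one has $s^{(v)}\geq 4$, so $(s^{(v)}-1)^{m}\geq 1$ and each inner sum is dominated termwise by the convergent $s$-sum, hence converges absolutely (this also legitimizes your Fubini step). Then fix $v_0\in S$ and use the partition of reduced-word classes from \S \ref{subsec: words}--\ref{sec: preliminaries}: classes $w$ with $v_0w$ reduced are paired bijectively with the classes $v_0w$, the two members of a pair having identical multiplicities except that $m_{v_0}$ differs by one, so their terms cancel exactly; absolute convergence allows this regrouping and gives $g_S=0$. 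With that replacement (or by invoking Lemma \ref{lem: convergence criteria} applied to the link subgraph, which is logically independent of this lemma), your proof goes through: the identification $1/\varphi(q_{\vec 1})=\sum_{\ell,w}\prod_v((T^{(v)})^{m_v(w)})_{1,1}$, the two-sided comparison with the $s$-sum, and the converse construction of vectors in $\Ran(q_{\vec 1})$ (which you verify to the same level of detail as the paper) are all sound.
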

				
				\begin{proof}
					We will start with a vector $\xi_{\vec{1}} \in \Ran(q_{\vec1})$ and compute the norm of each $\xi_{\vec\imath} = e_{\vec\imath,\vec1}^{(\cK)}\xi_{\vec1}$ using the orthogonal decomposition \eqref{eq: master vector formula}, then show that it can only be non-zero and finite if the sum above converges.
					
					Note that the terms $\ip{\delta_{\vec{\imath}}, \varepsilon_{a_1,b_1}^{(w_1)} \dots \varepsilon_{a_\ell,b_\ell}^{(w_\ell)} \delta_{\vec{\jmath}}}$ are zero or one, so they are unchanged under squaring.
					We also have that
					\[
					\text{for } a \neq b, \quad \norm{ (t_b^{(v)})^{-1} \mathring{e}_{a,b}^{(v)} }_{\varphi}^2 = (t_b^{(v)})^{-2} \varphi((e_{a,b}^{(v)})^* e_{a,b}^{(v)}) = (t_b^{(v)})^{-2} \varphi(e_{b,b}^{(v)}) = (t_b^{(v)})^{-1}
					\]
					and
					\[
					\norm{ (t_b^{(v)})^{-1} \mathring{e}_{b,b}^{(v)} }_{\varphi}^2 = (t_b^{(v)})^{-2} \norm{e_{b,b}^{(v)} - \varphi(e_{b,b}^{(v)}) 1}_\varphi^2 = (t_b^{(v)})^{-2} t_b^{(v)}(1 - t_b^{(v)}) = (t_b^{(v)})^{-1} - 1.
					\]
					We therefore have
					\begin{equation} \label{eq: master norm formula}
						\norm{\xi_{\vec{\imath}}}^2 = \sum_{\vec{\jmath} \in [\vec n]} |\alpha_{\vec{\jmath}}|^2 \sum_{\ell \in \bN_0} \sum_{w \in \mathcal{W}_\ell} \sum_{\substack{a_1,b_1 \in [n(w_1)] \\ \vdots \\ a_{\ell}, b_{\ell} \in [n(w_\ell)]}} \ip{\delta_{\vec{\imath}}, \varepsilon_{a_1,b_1}^{(w_1)} \dots \varepsilon_{a_\ell,b_\ell}^{(w_\ell)} \delta_{\vec{\jmath}}} \, \prod_{t=1}^\ell \left( \frac{1}{t_{b_t}^{(w_t)}} - \mathbf{1}_{a_t=b_t} \right).
					\end{equation}
					Define for $v \in \cV$ the matrix
					\[
					S^{(v)} = \sum_{a,b \in [n(v)]} \frac{1}{t_{b}^{(v)}} \varepsilon_{a,b}^{(v)} \in \bigotimes_{v \in \cV} \mathbb{M}_{n(v)},
					\]   
					and note that
					\[
					\sum_{a,b \in [n(v)]} \varepsilon_{a,b}^{(v)}\left( \frac{1}{t_{b}^{(v)}} - \mathbf{1}_{a=b} \right) = S^{(v)} - 1.
					\]
					Then
					\[
					\sum_{\substack{a_1,b_1 \in [n(w_1)] \\ \vdots \\ a_{\ell}, b_{\ell} \in [n(w_\ell)]}} \ip{\delta_{\vec{\imath}}, \varepsilon_{a_1,b_1}^{(w_1)} \dots \varepsilon_{a_\ell,b_\ell}^{(w_\ell)} \delta_{\vec{\jmath}}} \, \prod_{t=1}^\ell \left( \frac{1}{t_{b_t}^{(w_t)}} - \mathbf{1}_{a_t=b_t} \right) = \ip{\delta_{\vec{\imath}}, (S^{(w_1)}-1) \dots (S^{(w_\ell)} - 1) \delta_{\vec{\jmath}}},
					\]
					and so
					\begin{equation} \label{eq: master norm formula 2}
						\norm{\xi_{\vec{\imath}}}^2 = \sum_{\ell \in \bN_0} \sum_{w \in \mathcal{W}_\ell} \ip*{\delta_{\vec{\imath}}, (S^{(w_1)}-1) \dots (S^{(w_\ell)} - 1) \sum_{\vec{\jmath}} |\alpha_{\vec{\jmath}}|^2 \delta_{\vec{\jmath}}},
					\end{equation}
					where all the terms in the summation are nonnegative.
					We further note the following properties of the matrices $S^{(v)}$, recalling that we set $s^{(v)} = \sum_{b\in[n(v)]}\frac1{t_b^{(v)}}$:
					\begin{itemize}
						\item $S^{(v)} - 1$ has nonnegative entries.
						\item $S^{(v)}$ is a rank one matrix in $\mathbb{M}_{n(v)}$ tensor the identity in the other factors.
						\item $(S^{(v)})^2 = s^{(v)} S^{(v)}$.
						%    \item $\Tr(S^{(v)}) = \sum_{b \in [n(v)]} 1/t_b^{(v)}$.
						\item $S^{(v)}$ and $S^{(v')}$ commute for all $v \neq v'$, and in fact they are in tensor position.
						\item $\sum_{\vec{\imath}} \delta_{\vec{\imath}}$ is an eigenvector of each $S^{(v)}$ with eigenvalue $s^{(v)}$.
						\item Moreover, the vector
						\[
						\gamma = \sum_{\vec{\imath} \in [\vec n]} \prod_{v \in \cV} \frac{1}{t_{i(v)}^{(v)}} \delta_{\vec{\imath}}
						= \bigotimes_{v \in \cV} \sum_{i \in [n(v)]} \frac1{t_{i(v)}^{(v)}} \delta_{i(v)}
						\]
						is an eigenvector of $(S^{(v)})^*$ with eigenvalue $s^{(v)}$.
					\end{itemize}
					% Since $S^{(v)} - 1$ has nonnegative entries and the vector $\sigma = \sum_{\vec{\jmath}} |\alpha_{\vec{\jmath}}|^2 \delta_{\vec{\jmath}}$ has nonnegative entries, we have
					% \[
					%   \sum_{\ell \in \bN_0} \sum_{w \in \mathcal{W}_\ell} \ip*{\delta_{\vec{\imath}}, (S^{(w_1)}-1) \dots (S^{(w_\ell)} - 1) \sum_{\vec{\jmath}} |\alpha_{\vec{\jmath}}|^2 \delta_{\vec{\jmath}}} < \infty \iff \sum_{\ell \in \bN_0} \sum_{w \in \mathcal{W}_\ell} \Tr[(S^{(w_1)}-1) \dots (S^{(w_\ell)} - 1)] < \infty.
					% \]
					% In particular, there exists a nonzero vector $\xi \in \Ran(q_{\vec{1}})$ if and only if the sum on the right-hand side converges.
					% Since the same holds for any vector, we can also substitute the eigenvector $\sum_{\vec{\imath}} \delta_{\vec{\imath}}$ and so obtain
					% \[
					%   q_{\vec{1}} \neq 0 \iff \sum_{\ell \in \bN_0} \sum_{w \in \mathcal{W}_\ell} (s^{(w_1)}-1) \dots (s^{(w_\ell)} - 1) < \infty.
					% \]
					% Note that although we started with $q_{\vec{1}}$, the same argument applies with an arbitrary tuple $\vec{\imath}$ instead of $\vec{1}$.
					
					%    Our next goal is to compute $\varphi(q_{\vec\imath})$ assuming $q_{\vec\imath}$ is non-zero.
					%   Let us start with $\xi \in \Ran(q_{\vec{1}})$.
					%  As in the previous lemma, by transporting with the partial isometries $e_{\vec{\imath},\vec{\jmath}}^{(\cK)}$, assume we have a family of vectors such that $\xi_{\vec{\imath}} = e_{\vec{\imath},\vec{\jmath}}^{(\cK)} \xi_{\vec{\jmath}}$.
					Recall that $\xi_{\vec{\imath}}$ was obtained by applying the partial isometry $e_{\vec{\imath},\vec{1}}^{(\cK)}$ to the vector $\xi_{\vec{1}}$ which is in the range of $q_{\vec{\imath}}$, and hence in the source subspace for this partial isometry.  Hence, $\norm{\xi_{\vec{\imath}}}$ is the same for all $\vec{\imath}$.  We will thus simplify our computation by taking linear combinations of the norms to exploit the left eigenvector $\gamma$ above.
					Namely, for each $\vec{k}$, using \eqref{eq: master norm formula 2} we have
					\begin{align*}
						\prod_{v \in \cV} s^{(v)} \norm{\xi_{\vec{k}}}^2 &= \sum_{\vec{\imath} \in [\vec n]} \left( \prod_{v \in \cV} \frac{1}{t_{i(v)}^{(v)}} \right) \norm{\xi_{\vec{\imath}}}^2 \\
						&= \sum_{\ell \in \bN_0} \sum_{w \in \mathcal{W}_\ell} \ip*{\gamma, (S^{(w_1)}-1) \dots (S^{(w_\ell)} - 1) \sum_{\vec{\jmath}} |\alpha_{\vec{\jmath}}|^2 \delta_{\vec{\jmath}}} \\
						&= \left( \sum_{\ell \in \bN_0} \sum_{w \in \mathcal{W}_\ell} (s^{(w_1)}-1) \dots (s^{(w_\ell)} - 1) \right) \ip*{\gamma, \sum_{\vec{\jmath}} |\alpha_{\vec{\jmath}}|^2 \delta_{\vec{\jmath}}} \\
						&= \left( \sum_{\ell \in \bN_0} \sum_{w \in \mathcal{W}_\ell} (s^{(w_1)}-1) \dots (s^{(w_\ell)} - 1) \right) \sum_{\vec\jmath \in [\vec n]} \prod_{v \in \cV} \frac{1}{t_{j(v)}^{(v)}} |\alpha_{\vec{\jmath}}|^2.
					\end{align*}
					In particular, if the summation in \eqref{eq: summation condition} is infinite, then the only way this equation can hold is if $\alpha_{\vec{\jmath}} = 0$ for all $\vec{\jmath}$ and so $\xi_{\vec{\imath}} = 0$ for all $\vec{\imath}$; thus, in this case $q_{\vec{\imath}} = 0$ for all $\vec{\imath}$.  On the other hand, if the summation converges, then \eqref{eq: master vector formula} produces vectors in the range of $q_{\vec{\imath}}$ by the converse claim in Lemma \ref{lem: master vector lemma}.  Thus, $q_{\vec{\imath}}$ is nonzero for every $\vec{\imath}$.  This proves \eqref{eq: summation condition}.
					
					To prove \eqref{eq: state on projection formula}, fix $\vec{\imath}$ and apply the above computations explicitly taking $\xi_{\vec{\imath}} = q_{\vec \imath}\Omega$ (and so $\xi_{\vec{\jmath}} = e_{\vec{\jmath},\vec{\imath}}^{(\cK)} \xi_{\vec{\imath}}$ for $\vec{\jmath} \neq \vec{\imath}$).  In the context of the decomposition \eqref{eq: master vector formula} from Lemma~\ref{lem: master vector lemma}, we have that
					\[
					\alpha_{\vec{\jmath}} = \ip{\Omega, \xi_{\vec{\jmath}}} = \varphi(e_{\vec{\jmath},\vec{\imath}}^{(\cK)} q_{\vec{\imath}}).
					\]
					Let $\sigma_t^\varphi$ be the modular group.  Recall $\sigma_t^\varphi|_{M_v} = \sigma_t^{\varphi_v}$. Since we chose $\varphi_v |_{\mathbb{M}_{n(v)}}$ to be given by a diagonal matrix, we have that $e_{j(v),i(v)}^{(v)}$ is an eigenvector of the modular group.
					Hence, so is the product $e_{\vec{\jmath},\vec{\imath}}^{(\cK)}$ and so for some $\lambda$ we have
					\[
					\varphi(e_{\vec{\jmath},\vec{\imath}}^{(\cK)} x) = \lambda \varphi(x e_{\vec{\jmath},\vec{\imath}}^{(\cK)}).
					\]
					In particular,
					\[
					\alpha_{\vec{\jmath}} 
					= \varphi(e_{\vec{\jmath},\vec{\imath}} q_{\vec{\imath}}) = \lambda \varphi(q_{\vec{\imath}} e_{\vec{\jmath},\vec{\imath}}) = 0 \text{ for } \vec{\jmath} \neq \vec{\imath}.
					\]
					Meanwhile,
					\[
					\alpha_{\vec{\imath}} = \varphi(q_{\vec{\imath}}).
					\]
					This results in
					\[
					\prod_{v \in \cV} s^{(v)} \varphi(q_{\vec{\imath}}) = \prod_{v \in \cV} s^{(v)} \norm{q_{\vec{\imath}}}_\varphi^2= \sum_{\ell \in \bN_0} \sum_{w \in \mathcal{W}_\ell}(s^{(w_1)}-1) \dots (s^{(w_\ell)} - 1) \prod_{v \in \cV} \frac{1}{t_{i(v)}^{(v)}} \varphi(q_{\vec{\imath}})^2
					\]
					and therefore,
					\[
					\frac{1}{\varphi(q_{\vec{\imath}})} = \sum_{\ell \in \bN_0} \sum_{w \in \mathcal{W}_\ell}(s^{(w_1)}-1) \dots (s^{(w_\ell)} - 1) \prod_{v \in \cV} \frac{1}{t_{i(v)}^{(v)} s^{(v)}},
					\]
					which proves \eqref{eq: state on projection formula}.
					% Furthermore, substituting these values of $\alpha_{\vec{\imath}}$ into \eqref{eq: master vector formula}, we obtain
					% \begin{align*}
						%   q_{\vec{k}} &= \varphi(q_{\vec{k}}) \sum_{\ell \in \bN} \sum_{w \in \mathcal{W}_\ell} \sum_{\substack{a_1,b_1 \in [n(w_1)] \\ \vdots \\ a_{\ell}, b_{\ell} \in [n(w_\ell)]}} \ip{\delta_{\vec{k}}, \varepsilon_{a_1,b_1}^{(w_1)} \dots \varepsilon_{a_\ell,b_\ell}^{(w_\ell)} \delta_{\vec{k}}} \, \frac{1}{t_{b_1}^{(w_1)} \dots t_{b_\ell}^{(w_\ell)}} \, \mathring{e}_{a_1,b_1}^{(w_1)} \otimes \dots \otimes \mathring{e}_{a_\ell,b_\ell}^{(w_\ell)} \\
						%   &= \varphi(q_{\vec{k}}) \sum_{\ell \in \bN} \sum_{w \in \mathcal{W}_\ell}  \ip{\delta_{\vec{k}}, S^{(w_1)} \dots S^{(w_\ell)} \delta_{\vec{k}}} \, \mathring{e}_{a_1,b_1}^{(w_1)} \otimes \dots \otimes \mathring{e}_{a_\ell,b_\ell}^{(w_\ell)}.
						% \end{align*}
				\end{proof}

				\subsection{Realization of all atomic summands} \label{subsec: realization of all}
				
				Our next goal is to show that all type I factor summands arise from the construction in the previous sections.
				
				\begin{notation} \label{nota: general f d summand}
					Let $(M,\varphi) = \gp_{v \in \cG} (M_v,\varphi_v)$.  In the rest of this section, assume that $(N,\psi)$ is a direct summand with $N \cong B(H)$ for some Hilbert space $H$.  Let $\pi: M \to N$ be the projection.  For each $v \in \cV$, note that $\ker(\pi \circ \iota_v)$ is an ideal in $M_v$, hence a direct summand.  Let $N_v$ be the complementary direct summand in $M_v$, and let $\pi_v: M_v \to N_v$ be the projection.  Let $1_N$ and $1_{N_v}$ denote the internal units of $N$ and $N_v$ respectively, which are central projections in $M$ and $M_v$ respectively.
				\end{notation}
				
				\begin{lemma}
					With the setup of Notation \ref{nota: general f d summand}, $\pi|_{N_v}$ is unital and injective and $N_v$ is atomic, i.e.\ a direct sum of type I factors.
				\end{lemma}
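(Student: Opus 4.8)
The plan is to analyze the map $\pi|_{N_v}\colon N_v\to N$ directly. First I would observe that $N_v$ is, by construction, a two-sided ideal complement in $M_v$, so it is itself a von Neumann algebra with unit $1_{N_v}$, and $\pi|_{N_v}$ is a $*$-homomorphism. Since $\ker(\pi\circ\iota_v)$ was quotiented out in defining $N_v$, the restriction $\pi\circ\iota_v|_{N_v}=\pi|_{N_v}$ is injective on $N_v$ by construction. For unitality: I need $\pi(1_{N_v})=1_N$. The key point is that $1_{N_v}$ is a central projection in $M_v$, hence $\iota_v(1_{N_v})$ commutes with $\iota_v(M_v)$; but it does \emph{not} a priori commute with all of $M$. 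Instead I would argue that $1_N=\pi(1_M)=\pi(\iota_v(1_{M_v}))=\pi(\iota_v(1_{N_v}))+\pi(\iota_v(1_{M_v}-1_{N_v}))$, and that the second term vanishes because $1_{M_v}-1_{N_v}$ is the unit of $\ker(\pi\circ\iota_v)$, so $\pi(\iota_v(1_{M_v}-1_{N_v}))=0$. Hence $\pi(\iota_v(1_{N_v}))=1_N$, i.e.\ $\pi|_{N_v}$ is unital into $N$.

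Next, to show $N_v$ is atomic, I would use that $N\cong B(H)$ is a type I factor, so it contains minimal projections, and transfer atomicity back through $\pi|_{N_v}$. Concretely, since $\pi|_{N_v}\colon N_v\to N$ is a unital, injective, normal $*$-homomorphism (normality because $\pi$ and $\iota_v$ are normal), its image $\pi(N_v)$ is a von Neumann subalgebra of $B(H)$ containing the identity, and $N_v\cong \pi(N_v)$. Now I want to locate a minimal projection in $N_v$. Take a minimal (rank-one) projection $e\in N=B(H)$. The relative commutant argument: consider the von Neumann subalgebra $\pi(N_v)\subseteq B(H)$; I claim it has a minimal projection. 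If $\pi(N_v)$ were diffuse, then... here is where I expect the main obstacle.

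The core difficulty is ruling out that $N_v$ (equivalently $\pi(N_v)\subseteq B(H)$) is diffuse, or more generally has a diffuse summand. The cleanest route, which I would pursue, is to use the finite-dimensionality of the atomic summand structure already established: by Lemma~\ref{lem: construction of f d summand and state} and the surrounding results, any type I factor summand of the graph product restricts on each vertex algebra to give type I factor summands, and the relevant obstruction is that a diffuse $N_v$ would force $N$ itself to be diffuse or non-type-I — contradicting $N\cong B(H)$. More precisely, I would argue: if $p$ is a projection in $N_v$ with $\pi(p)$ a minimal projection in $N$ unavailable (because $\pi(N_v)$ is diffuse), then one can produce a decreasing sequence of nonzero projections $p_1\geq p_2\geq\cdots$ in $N_v$ with $\pi(p_k)$ of trace tending to zero but still the images $\pi(p_k)$ nonzero (by injectivity of $\pi|_{N_v}$), whose infimum $\pi(\bigwedge p_k)$ would be a nonzero projection of trace zero in $B(H)$ — impossible since $B(H)$ is type I and $\psi$ is faithful normal on $N$. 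Hence $N_v$ has a minimal projection, and since $1_{N_v}\in N_v$ and $N_v$ is a von Neumann algebra, iterating (using that below any projection $N_v$ is still a von Neumann algebra and $\pi$ stays injective) shows $1_{N_v}$ is a sum of minimal projections, so $N_v$ is atomic, i.e.\ a (countable) direct sum of type I factors. The main obstacle is making the "diffuse $N_v$ forces a contradiction" step fully rigorous — I would lean on faithfulness and normality of $\psi\circ\pi|_{N_v}$ together with the fact that $B(H)$ admits no nonzero projection of arbitrarily small $\psi$-weight below a fixed minimal projection, i.e.\ atomicity passes to subalgebras on which the state restricts faithfully and normally.
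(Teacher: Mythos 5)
The injectivity and unitality arguments are fine and match the paper. The gap is in the atomicity step, and it is a genuine one: the argument as written does not work.

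First, your key inference --- ``a decreasing sequence of nonzero projections $p_k$ in $N_v$ with $\psi(\pi(p_k))\to 0$ would have a nonzero infimum $\bigwedge p_k$ of trace zero'' --- is false. Diffuseness precisely means that such sequences exist with $\bigwedge p_k = 0$, so there is no contradiction to be had. Injectivity of $\pi|_{N_v}$ preserves infima but tells you nothing about whether the infimum is zero. Second, the closing principle you appeal to, ``atomicity passes to subalgebras on which the state restricts faithfully and normally,'' is simply not a theorem. The standard counterexample is $L^\infty[0,1]$ acting by multiplication on $B(L^2[0,1])$: the vector state coming from Lebesgue measure restricts faithfully and normally to a diffuse subalgebra inside a type I factor. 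A normal injective unital $*$-homomorphism into $B(H)$ with a faithful normal state on the source gives you no atomicity on its own. Third, the mid-proof appeal to ``Lemma~\ref{lem: construction of f d summand and state} and surrounding results'' to conclude $N_v$ must be type I is circular: the realization-of-all-summands results in \S\ref{subsec: realization of all} are downstream of the lemma you are trying to prove.

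What the paper actually uses, and what you would need, is a normal conditional expectation from (corners of) $N$ onto (corners of) $N_v$. The graph product comes with a state-preserving normal conditional expectation $E_{M_v}: M \to M_v$; one sets $y = E_{M_v}[1_N] \in Z(M_v)$, shows $y \in N_v$ with full support, and uses spectral projections $p_k$ of $y$ to build maps $\varepsilon_k(x) = z_k E_{M_v}(x): p_k N p_k \to p_k N_v$ that are genuine normal conditional expectations. Since $p_k N p_k$ is a corner of $B(H)$ and hence a type I factor, the standard theorem (cited as \cite[IV.2.2.2]{Blackadar2006encyclopedia}) that the image of a normal conditional expectation from a type I factor is atomic then applies; $N_v$ is the increasing union of the $p_k N_v$ and is therefore atomic. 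The conditional expectation is the idea your sketch is missing, and without it the ``rule out diffuse $N_v$'' step cannot be repaired.
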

				
				\begin{proof}
					By construction, $M_v \cong N_v \oplus \ker(\pi|_{M_v})$ and so $N_v \cap \ker(\pi) = 0$, or $\pi|_{N_v}$ is injective.  Also, $1 - 1_{N_v} \in \ker(\pi|_{M_v})$, so $\pi|_{N_v}$ is unital.
					
					To show that $N_v$ is atomic, we want to construct a conditional expectation from $N$ onto $N_v$.  By \cite[Remark 2.24]{CaFi2017}, there is a faithful normal state-preserving conditional expectation $E_{M_v}: M \to M_v$.  Note that $y = E_{M_v}[1_N] \in Z(M_v)$ because $1_N \in Z(M)$. Observe that for $x \in N_v$,
					\[
					E_{M_v}[\pi(x)] = E_{M_v}[x 1_N] = x E_{M_v}[1_N] = xy.
					\]
					In particular, $(1 - 1_{N_v}) y = 0$, so $y \leq 1_{N_v}$; equivalently, $y \in N_v$ because $N_v$ is an ideal in $M_v$.  On the other hand, letting $p$ be the kernel projection of $y$ as an element of $N_v$, then $E_{M_v}[\pi(p)] = py = 0$; by faithfulness of $E_{M_v}$ and injectivity of $\pi|_{N_v}$, we have $p = 0$.  Hence, the support projection of $y$ is equal to $1_{N_v}$.
					
					We claim that there is a normal conditional expectation $\varepsilon: N \to N_v$ such that $E_{M_v}[x] = y \varepsilon[x]$ for $x \in N_v$.  For $k \in \bN$, let $p_k= \mathbbm{1}_{(1/(k+1),1/k]}(y)$ be the spectral projection of $y$ associated to $(1/(k+1),1/k]$.  Since $y \leq 1$, we have $\sum_{k \in \bN} p_k = 1_{N_v}$.  Moreover, there is a unique positive element $z_k \in Z(N_v)$ with $z_k y = p_k$. Let $\varepsilon_k: N \to N_v$ be given by $\varepsilon_k[x] = z_k E_{M_v}(x)$.  This is a unital, normal, completely positive map from $N$ to $p_k N_v$.  Since $N \cong \bigoplus_k p_k N_v$, the map $\varepsilon: N \to N_v$ given by $\varepsilon(x) = \sum_k \varepsilon_k(x)$ is unital, normal, and completely positive.  Moreover, for $x \in M$, we have
					\[
					y \varepsilon[x] = \sum_k yz_k E_{M_v}[\pi(x)] = \sum_k p_k E_{M_v}[\pi(x)] = E_{M_v}[\pi(x)].
					\]
					In particular, for $x \in N_v$, we have $y \varepsilon \circ \pi(x) = E_{M_v}[\pi(x)] = y \pi(x)$, and since the support projection of $y$ is equal to $1_N$, we conclude
					\[
					\pi(x) = 1_N \pi(x) = 1_N \varepsilon \circ \pi(x) = \pi((\varepsilon \circ \pi(x))),
					\]
					and since $\pi|_{N_v}$ is injective, we conclude that $\varepsilon \circ \pi(x) = x$.  Hence, $\varepsilon \circ \pi = \id$ and so $\varepsilon$ gives a normal conditional expectation $N \to N_v$ corresponding to the inclusion $\pi: N_v \to N$.  By \cite[Theorem IV.2.2.2]{Blackadar2006encyclopedia}, since there is a normal conditional expectation from the type I factor $N$ onto $N_v$, we know $N_v$ is atomic.
					%Using the spectral decomposition of $y$, there are positive elements $z_k \in Z(N_v)$ such that $p_k = z_ky$ is a projection with $p_{k+1} \geq p_k$ and $z_k y \to 1_{N_v}$ in SOT.
					%Since $p_k$ and $1_N$ commute, $p_k N p_k = (p_k \wedge 1_N) N (p_k \wedge 1_N)$ is a corner of $N$, and hence is also a type I factor.
					%The map $\iota_k = \pi|_{p_k N_v}$ gives a normal unital $*$-homomorphism $p_k N_v \to p_k N p_k$, and the map $\varepsilon_k: p_k N p_k \to p_k N_v$ given by $\varepsilon_k(x) = z_k E_{M_v}(x)$ is a faithful, normal, completely positive map satisfying $\varepsilon_k \circ \iota_k = \id_{p_k N_v}$, and hence is a faithful normal conditional expectation from $p_k N p_k$ to $p_k N_v$.
					%By \cite[Theorem IV.2.2.2]{Blackadar2006encyclopedia}, since there is a normal conditional expectation from the type I factor $p_k N p_k$ onto $p_k N_v$, we know $p_k N_v$ is atomic.
					%Since $N_v = p_1 N_v \oplus \bigoplus_k (p_{k+1} - p_k) N_v$, we conclude that $N_v$ is atomic.
				\end{proof}
				
				\begin{lemma}
					With the setup of Notation \ref{nota: general f d summand}, suppose that $v$ and $v'$ are not adjacent.  Then either $N_v$ or $N_{v'}$ is $1$-dimensional.
				\end{lemma}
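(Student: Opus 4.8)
The plan is to prove the contrapositive: assuming that \emph{neither} $N_v$ \emph{nor} $N_{v'}$ is one-dimensional, I will derive a contradiction with $N\cong B(H)$ being a factor. Recall from the results already established that $\pi\circ\iota_v\neq 0$ (so $N_v\neq 0$, and $1-1_{N_v}\in\ker(\pi|_{M_v})$ gives $1_{N_v}\geq 1_N$), that $\pi|_{N_v}$ is injective, and that $N_v$ is atomic; also $\pi$ is the compression $x\mapsto 1_N x$, which, since $1_N$ is central and $\sigma^\varphi$-fixed and $\varphi|_N=\alpha\psi$, intertwines modular groups: $\sigma^\psi_t\circ\pi=\pi\circ\sigma^\varphi_t$. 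My first move is to choose projections $g_v\in M_v$ and $g_{v'}\in M_{v'}$, each in the centralizer of the corresponding state, with $0<g_v<1_{N_v}$, $0<g_{v'}<1_{N_{v'}}$, $\varphi(g_v)\leq\varphi(g_{v'})$, and $\varphi(g_v)+\varphi(g_{v'})\leq 1$. This is possible because $N_v$ is atomic with $\dim N_v\geq 2$, so $1_{N_v}$ splits into two nonzero orthogonal projections lying in $(M_v)^{\varphi_v}$ (central projections of $N_v$ if it has $\geq 2$ blocks, otherwise a splitting of the spectral projections of the density of $\varphi_v$ on a single $\geq 2$-dimensional block); their $\varphi_v$-values sum to $\varphi_v(1_{N_v})\leq 1$, so one of them, taken as $g_v$, has $\varphi(g_v)=\varphi_v(g_v)\leq 1/2$, and $g_v<1_{N_v}\leq 1$ forces $\varphi(g_v)\in(0,1)$. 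Doing the same in $M_{v'}$ and relabelling $v\leftrightarrow v'$ if needed yields the displayed inequalities.

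The heart of the argument is to show that $g_v$ lies under the central support of the diffuse part of $P:=W^*(g_v,g_{v'})$. Since $v\not\sim v'$, the subalgebras $M_v$ and $M_{v'}$ are freely independent in $(M,\varphi)$ (they generate $M_v*M_{v'}$, cf.\ \cite{CaFi2017}), so $g_v,g_{v'}$ are free projections and $P\cong(\mathbb C^2)*(\mathbb C^2)$. Write $P=z_{\mathrm{at}}P\oplus z_{\mathrm{diff}}P$ for the decomposition into its maximal atomic and diffuse parts. Now $g_vPg_v=W^*(x)$ with $x:=g_vg_{v'}g_v$ (any word in $g_v,g_{v'}$ compressed by $g_v$ on both sides is a polynomial in $x$), and $x$ has no eigenvalue on $g_vH$: the $0$-eigenprojection is $g_v\wedge(1-g_{v'})$, which vanishes by freeness because $\varphi(g_v)+\varphi(1-g_{v'})=1-(\varphi(g_{v'})-\varphi(g_v))\leq 1$; the $1$-eigenprojection is $g_v\wedge g_{v'}$, which vanishes because $\varphi(g_v)+\varphi(g_{v'})\leq 1$; and there is no eigenvalue in $(0,1)$ because the spectral distribution of a product of two free projections has no atom in the open interval $(0,1)$ (a classical fact, extractable from \cite{BV1998} just as the "no atom of a free sum" statements used above, or see \cite{Dykema1993freedimension}). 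Hence $g_vPg_v=W^*(x)$ is diffuse, and therefore $g_vz_{\mathrm{at}}=0$: otherwise $g_vz_{\mathrm{at}}$ would be a nonzero projection of the atomic algebra $z_{\mathrm{at}}P$ whose corner $z_{\mathrm{at}}(g_vPg_v)z_{\mathrm{at}}$ is a nonzero atomic direct summand of the diffuse algebra $g_vPg_v$, which is impossible. So $g_v\leq z_{\mathrm{diff}}$; in particular $z_{\mathrm{diff}}\neq 0$.

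Finally I push this through $\pi$. Because $g_v\in(M_v)^{\varphi_v}$ and $g_{v'}\in(M_{v'})^{\varphi_{v'}}$, the intertwining relation puts $\pi(g_v),\pi(g_{v'})$ in the centralizer $N^\psi$, so $Q:=W^*(\pi(g_v),\pi(g_{v'}))=\pi(P)$ is a $\sigma^\psi$-invariant subalgebra of $N$; by Takesaki's theorem there is a $\psi$-preserving normal conditional expectation $N\to Q$, and since $N\cong B(H)$ is a type I factor, \cite[IV.2.2.2]{Blackadar2006encyclopedia} shows $Q$ is atomic. But $\pi|_P\colon P\twoheadrightarrow Q$ is a normal surjection, so $\pi(z_{\mathrm{diff}})$ is a central projection of $Q$ and $\pi(z_{\mathrm{diff}})Q=\pi(z_{\mathrm{diff}}P)$ is a quotient of the diffuse algebra $z_{\mathrm{diff}}P$ by a central projection, hence diffuse or zero; being a direct summand of the atomic algebra $Q$ it must be zero, i.e.\ $\pi(z_{\mathrm{diff}})=1_N z_{\mathrm{diff}}=0$. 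Compressing $0\leq g_v\leq z_{\mathrm{diff}}$ by the central projection $1_N$ gives $0\leq\pi(g_v)=1_N g_v\leq 1_N z_{\mathrm{diff}}=0$, so $\pi(g_v)=0$; but $g_v$ is a nonzero element of $N_v$ and $\pi|_{N_v}$ is injective, a contradiction. Hence one of $N_v$, $N_{v'}$ is one-dimensional.

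I expect the main obstacle to be the structural input used in the second paragraph, most sharply the assertion that a product of two free projections has no spectral atom strictly inside $(0,1)$ (together with identifying the endpoint eigenprojections with intersections of projections), since the rest is an assembly of tools already at hand: free independence of non-adjacent vertices, existence of modular-preserving conditional expectations onto suitable subalgebras, and the fact that the image of a type I factor under a normal conditional expectation is atomic. A secondary nuisance is the case bookkeeping in the first paragraph when the blocks of $N_v$ or $N_{v'}$ are small.
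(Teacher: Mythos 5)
Your proof is correct, but it takes a genuinely different route from the paper's. The paper also argues by contradiction assuming both $N_v$ and $N_{v'}$ are at least $2$-dimensional, but it works with the three-atom abelian algebras $A_v = \operatorname{Span}(1-1_{N_v}, e_1, e_2)$ and $A_{v'}$ inside the centralizers, invokes Dykema's explicit description of $A_v * A_{v'}$ (atomic part spanned by the intersections $e_i \wedge f_j$ with weights $\max(0,\varphi(e_i)+\varphi(f_j)-1)$, plus an interpolated free group factor or $\mathbb{M}_2 \otimes L^\infty[0,1]$ piece), builds a state-preserving conditional expectation $E_A: M \to A$ via Boca's free product of expectations, analyzes the support $p$ of $E_A[1_N]$ to conclude $p \leq \sum_{i,j} e_i \wedge f_j$, and finishes with a pigeonhole argument on the maximal weight showing two of the $e_{i'}$ lie in $\ker\pi$, contradicting injectivity of $\pi|_{N_v}$. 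You instead take a single subprojection $g_v < 1_{N_v}$ per vertex with small state value, use the two-free-projections structure to show $g_v$ sits under the diffuse central part of $P = W^*(g_v,g_{v'})$, and then apply the conditional expectation on the \emph{target} side: since $g_v, g_{v'}$ were chosen in the centralizers, $\pi(P) \subseteq N^\psi$, Takesaki gives a $\psi$-preserving normal expectation $N \to \pi(P)$, the same \cite[IV.2.2.2]{Blackadar2006encyclopedia} fact the paper uses forces $\pi(P)$ atomic, so $\pi$ kills $z_{\mathrm{diff}}$ and hence $g_v$, contradicting injectivity of $\pi|_{N_v}$. Your version buys a cleaner endgame (no weight bookkeeping, no Boca expectation, no analysis of $E_A[1_N]$ and its support), at the cost of the structural input you yourself flag: that $g_v g_{v'} g_v$ has no spectral atom in $(0,1)$. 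That fact is indeed classical in the tracial setting (which applies here since $P$ lies in the centralizer of $\varphi$) — it follows from Voiculescu's two-projection computation, or from \cite[Theorem 7.4]{BV1998} via the standard $2\times 2$ reduction for a pair of projections — but note you could bypass it entirely: Dykema's Theorems 1.1/2.3, which the paper already cites, say the atomic central part of $W^*(g_v,g_{v'})$ is spanned by the four intersections $g_v \wedge g_{v'}$, $g_v \wedge (1-g_{v'})$, $(1-g_v)\wedge g_{v'}$, $(1-g_v)\wedge(1-g_{v'})$; the first two vanish by your trace inequalities and the last two are orthogonal to $g_v$, giving $g_v z_{\mathrm{at}} = 0$ directly without discussing interior atoms.
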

				
				\begin{proof}
					Suppose for contradiction that $N_v$ and $N_{v'}$ are both at least $2$-dimensional.  Then we can write $1_{N_v} = e_1 + e_2$ where $e_1$ and $e_2$ are projections in the centralizer of the state $\psi_v$.  Indeed, if $N_v$ has nontrivial center, $e_1$ and $e_2$ can be chosen to be central, and otherwise $N_v \cong B(H)$ and the state is given by a density operator, so choose $e_1$ and $e_2$ to be projections that commute with it.  Similarly, write $1_{N_{v'}} = f_1 + f_2$ where $f_1$ and $f_2$ are projections in the centralizer of $\psi_{v'}$.  Also let $e_0 = 1 - 1_{N_v}$ and $f_0 = 1 - 1_{N_{v'}}$.
					
					Let $A_v = \Span(e_0,e_1,e_2)$ and $A_{v'} = \Span(f_0,f_1,f_2)$, and let $A = A_v \vee A_{v'} \subseteq M$. Note that $A_v$ and $A_{v'}$ are freely independent, so by \cite[Theorem 2.3]{Dykema1993freedimension}, if $e_0$ and $f_0$ are not both zero, then
					\[
					A \cong A_v * A_{v'} \cong \gamma L(F_s) \oplus \bigoplus_{i,j} \gamma_{i,j} \C,
					\]
					where $s \in (1,\infty)$, and the weights $\gamma_{i,j}$ are given by $\gamma_{i,j} = \max(0,\varphi(e_i) + \varphi(f_j) - 1)$, and $\gamma = 1 - \sum_{i,j} \gamma_{i,j}$; by convention, when one of the coefficients is zero, the corresponding direct summand vanishes.  Moreover, the $(i,j)$ summand in the decomposition is exactly the span of $e_i \wedge f_j$.  In case $e_0$ and $f_0$ are both zero, we similarly have by \cite[Theorem 1.1]{Dykema1993freedimension} that
					\[
					A \cong A_v * A_{v'} \cong \gamma(\mathbb{M}_2 \otimes L^\infty[0,1]) \oplus \bigoplus_{i,j} \gamma_{i,j} \C,
					\]
					where again $\gamma_{i,j} = \max(0,\varphi(e_i) + \varphi(f_j) - 1)$ and the $(i,j)$ summand is the span of $e_i \wedge f_j$.
					
					We claim that there is a state-preserving conditional expectation from $M$ onto $A$.  Indeed, since $A_v \subseteq M_v^{\varphi_v}$, there is a state-preserving conditional expectation from $M_v$ to $A_v$; similarly, from $M_{v'}$ to $A_{v'}$.  By taking the free product of these expectations by \cite{Boca1993}, there is a normal completely positive map $M_v * M_{v'} \to A_v * A_{v'}$ that restricts to the identity on $A_v * A_{v'}$, and in fact this is a state-preserving conditional expectation.  By composing with the conditional expectation from $M$ to $M_v * M_{v'}$, we obtain a state-preserving normal conditional expectation $E_A: M \to A$.
					
					Similarly to the previous proof, $E_A[\pi(x)] = E_A[1_N] x$ for $x \in A$.  Let $y = E_A[1_N] \in Z(A)$, and let $p$ be the support projection of $y$.  Then $x \in A$ is in $\ker(\pi)$ if and only if $xy = 0$, so that $\ker(\pi) \cap A = (1-p)A$.  We thus see that $\pi$ restricts to an injective $*$-homomorphism $pA \to N$.  Since $N$ is an ideal in $M$ and $\pi$ is an $M$-$M$-bimodule map, we have that $\pi(p) = p \pi(1) = \pi(1) p$, so that $p$ and $\pi(1) = 1_N$ commute, and so $pNp = (p \wedge 1_N) N (p \wedge 1_N)$ is a corner of $N$ and so also a type I factor.  Moreover, $\pi(p) = p \wedge 1_N = 1_{pNp}$, so $\pi$ gives a unital $*$-homomorphism $pA \to pNp$.
					
					Next, by the same argument as in the previous proof, there exists a unital, normal, completely positive map $\varepsilon: N \to pA$ such that $E_A[x] = y \varepsilon[x]$ for $x \in N$.  Thus, for $x \in pA$, we have $yx = E_A[\pi(x)] = y \varepsilon[x]$, which implies that $x = \varepsilon[x]$ since the support projection of $y$ is $p$ and $x \in pA$.  Therefore, $\varepsilon$ provides a normal conditional expectation $pNp \to pA$ corresponding to the inclusion $\pi|_{pA}: pA \to pNp$.  Since $pNp$ is a type I factor, \cite[Theorem IV.2.2.2]{Blackadar2006encyclopedia} shows that $p A$ is atomic.  So the intersection of $pA$ with the diffuse part of $A$ (either $L(F_s)$ or $\mathbb{M}_2 \otimes L^\infty[0,1]$) is trivial, or equivalently $p \leq \sum_{i,j=0}^2 e_i \wedge f_j$.
					%Similarly to the previous proof, $E_A[\pi(x)] = E_A[1_N] x$ for $x \in A$.  Let $y = E_A[1_N] \in Z(A)$, and let $p$ be the support projection of $y$.  Then $x \in A$ is in $\ker(\pi)$ if and only if $xy = 0$, so that $\ker(\pi) \cap A = (1-p)A$.  Since $y \in Z(A)$, it must be some combination of $1_{L(F_s)}$ and $e_i \wedge f_j$ for $i, j = 0, 1, 2$.  Hence, there is a positive element $z$ with $zy = p$. The map $\iota = \pi|_{p A}$ gives a normal unital $*$-homomorphism $p A \to p N p$, and the map $\varepsilon: p N p \to p A$ given by $\varepsilon(x) = z E_{A}(x)$ satisfies $\varepsilon \circ \iota = \id_{pA}$, and therefore, $\iota$ is an injective $*$-homomorphism and $\varepsilon$ a faithful conditional expectation.  Since $pNp$ is a type I factor, \cite[Theorem IV.2.2.2]{Blackadar2006encyclopedia} shows that $p A$ is atomic.  Thus, $p \leq \sum_{i,j=0}^2 e_i \wedge f_j$.
					
					Now assume that $\max_i \varphi(e_i) \geq \max_j \varphi(f_j)$ by switching $v$ and $v'$ if necessary, and let $i$ be an index where $\varphi(e_i)$ is maximal.  Then for $i' \neq i$ and for $j \in \{0,1,2\}$, we have $\varphi(e_{i'}) + \varphi(f_j) - 1 \leq \varphi(e_{i'}) + \varphi(e_i) - 1 \leq 0$.  Therefore, $\gamma_{i',j} = 0$ whenever $i' \neq i$. Thus, $e_{i'} \leq 1 - p$, so $e_{i'} \in (1-p)A \subseteq \ker(\pi)$.  This implies that $e_{i'} \in \ker(\pi)$ for two distinct indices $i'$ in $\{0,1,2\}$, which contradicts the fact that $e_1$ and $e_2$ are in $N_v$ and hence not in the kernel of $\pi$.
				\end{proof}
				
				\begin{lemma} \label{lem: general scenario clique}
					Assume the setup of Notation \ref{nota: general f d summand}.  Then
					\begin{enumerate}
						\item $\{v \in \cV: \dim N_v > 1\}$ is a clique in $\cG$.
						\item The images $\pi(M_v)$ commute in $N$.
						\item Each $N_v$ is a factor.
					\end{enumerate}
				\end{lemma}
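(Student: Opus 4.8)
\emph{Proof proposal.} The plan is to deduce all three parts in sequence from the two preceding lemmas together with the defining commutation relations of the graph product.

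\emph{Part (1)} is immediate. If $v$ and $v'$ were two non-adjacent vertices with $\dim N_v > 1$ and $\dim N_{v'} > 1$, this would contradict the previous lemma, which forces one of $N_v$, $N_{v'}$ to be one-dimensional whenever $v \not\sim v'$. Hence any two vertices of $\{v : \dim N_v > 1\}$ are adjacent, i.e.\ this set is a clique.

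\emph{For Part (2)} I would split into cases according to whether $v \sim v'$. If $v \sim v'$, then $\iota_v(M_v)$ and $\iota_{v'}(M_{v'})$ already commute inside $M$ by the definition of the graph product, so applying the $*$-homomorphism $\pi$ shows $\pi(M_v)$ and $\pi(M_{v'})$ commute in $N$. If $v \not\sim v'$, then by the previous lemma (after possibly interchanging $v$ and $v'$) we may assume $N_{v'}$ is one-dimensional; since $\pi|_{N_{v'}}$ is unital and injective by the earlier lemma, $\pi(M_{v'}) = \pi(N_{v'})$ is a one-dimensional unital $*$-subalgebra of $N$, namely $\C 1_N$, which is central and so commutes with $\pi(M_v)$. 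In all cases the images commute.

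\emph{For Part (3)}, fix $v$. Since $\pi$ is a normal surjection and $M$ is generated as a von Neumann algebra by $\{\iota_{v'}(M_{v'}) : v' \in \cV\}$, the family $\{\pi(M_{v'}) : v' \in \cV\}$ generates $N$. Take $z \in Z(\pi(M_v))$: by definition $z$ commutes with $\pi(M_v)$, and by Part (2) it commutes with $\pi(M_{v'})$ for every $v' \neq v$, so $z$ commutes with a generating set of $N$ and therefore $z \in Z(N) = \C 1_N$, as $N \cong B(H)$ is a factor. Thus $\pi(M_v)$ is a factor. Finally, $\pi(M_v) = \pi(N_v)$ and $\pi|_{N_v}$ is injective (earlier lemma), so $\pi|_{N_v}$ is a $*$-isomorphism of $N_v$ onto $\pi(M_v)$; transporting the conclusion back, $N_v$ has trivial center, and combined with the fact that $N_v$ is atomic this shows $N_v$ is a type I factor.

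The argument involves no hard analysis; the only points needing a little care are that normal surjections preserve generation of von Neumann algebras and that a central element of one algebra in the commuting family, being central in the whole of $N$, must be scalar. The conceptually load-bearing step is Part (2) — in particular the one-dimensionality dichotomy imported from the previous lemma — since it is precisely what makes $\{\pi(M_v)\}_{v \in \cV}$ a commuting family, which is what drives Part (3).
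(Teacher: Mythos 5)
Your proof is correct and follows essentially the same route as the paper's. Part (1) is identical; part (2) is the same two-case dichotomy (adjacent vertices give commutativity inherited from $M$, non-adjacent vertices force one image to be scalar by the one-dimensionality lemma), just organized pairwise rather than relative to the clique $\cK$; and part (3) is the same idea but spelled out more explicitly — the paper compresses the statement ``$\pi(p)$ is a central projection in $N$'' which implicitly relies on part (2) plus the fact that the $\pi(M_{v'})$ generate $N$, whereas you make this generation argument explicit and work with general central elements rather than central projections. The added explicitness is a mild improvement in readability; no new ideas or gaps.
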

				
				\begin{proof}
					(1) is immediate from the previous lemma.
					
					(2) Let $\cK$ be the clique from (1).  For $v \in \cK$, the algebras $M_v$ commute since the vertices are adjacent.  Since $\pi$ is a $*$-homomorphism, the images $\pi(M_v)$ commute for $v \in \cK$.  For $v \not \in \cK$, we have $\pi(M_v) = \pi(N_v)$ is $1$-dimensional, and so it equals $\C 1_N$, so $\pi(M_v)$ commutes with everything in $N$.
					
					(3) Suppose that $p$ is a central projection in $N_v$.  Then $\pi(p)$ is a central projection in $N$.  We assumed that $N$ is a factor, so $\pi(p) = 1_N$.  Since $\pi|_{N_v}$ is injective, $p = 1$.  Thus, $N_v$ is a factor.
				\end{proof}
				
				\begin{lemma} \label{lem: infinite-dimensional join decomposition}
					Assume the setup of Notation \ref{nota: general f d summand}.  Suppose that $N_v$ is infinite-dimensional and $v'$ is another vertex in $\cG$ with $M_{v'} \neq \C$.  Then $v$ is adjacent to $v'$.
				\end{lemma}
				
				\begin{proof}
					Suppose for contradiction that $v$ is not adjacent to some $v'$.  By the previous lemma, $N_{v'}$ is $1$-dimensional.  Since we assume that $M_{v'} \neq \C$, we have $\varphi(1_{N_{v'}}) < 1$.  Recall $N_v \cong B(H)$ with $H$ infinite-dimensional, and since $N_v$ has a faithful normal state, $H$ must be separable.  So there is a sequence of minimal projections $(p_k)_{k \in \bN}$ that add up to $1$.  We also have $\lim_{k \to \infty} \varphi(p_k) = 0$, so for sufficiently large $k$, $\varphi(p_k) + \varphi(1_{N_{v'}}) < 1$, which implies that $p_k \wedge 1_{N_{v'}} = 0$ since they are freely independent.  Now $\pi(p_k) = p_k 1_N = p_k \pi(p_k)$ so that $\pi(p_k) \leq p_k$.  Moreover, $\pi(p_k) \leq 1_N = \pi(1_{N_{v'}}) \leq 1_{N_{v'}}$.  Thus, $\pi(p_k) \leq p_k \wedge 1_{N_{v'}} = 0$.  So $p_k \in \ker(\pi)$ which contradicts that $\pi$ is injective on $N_v$.
				\end{proof}
				
				In particular, assuming that $M_v \neq \C$ for all $v \in \cV$ and letting $\cK' = \{v \in \cV: \dim N_v  = \infty\}$, we see that $\cK'$ is a clique and $\cG$ splits as a graph join of $\cK'$ with the complementary graph $\cG'$, and consequently,
				\[
				(M,\varphi) = \bigotimes_{v \in \cK'} (M_v,\varphi_v) \otimes \gp_{v \in \cG'} (M_v,\varphi_v).
				\]
				Thus, the classification of atomic summands for $(M,\varphi)$ can be reduced to that of $\gp_{v \in \cG'} (M_v,\varphi_v)$ (see the conclusion of the proof of Theorem \ref{thm: classification of B(H) summands} in \S \ref{subsec: conclusion of proof} for details).  So we now classify the summands for the case where the $N_v$'s are finite-dimensional.
				
				\begin{lemma} \label{lem: realization of all f d summands}
					Let $N$ be a type I factor summand of the graph product as in Notation \ref{nota: general f d summand}, and assume that $N_v$ is finite-dimensional for each $v$.  Then $N$ has the form given in Lemma \ref{lem: construction of f d summand and state}.
				\end{lemma}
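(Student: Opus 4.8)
The plan is to choose matrix units in each $N_v$ so as to land exactly in the setting of Notations~\ref{nota: fd summands in gp} and~\ref{nota: fd summands in gp 2}, to show that the resulting projections $q_{\vec{\imath}}$ are all nonzero, and then to identify the summand they produce via Lemma~\ref{lem: construction of f d summand and state} with $N$ itself. Concretely, I would first record what the preceding lemmas give: $\cK = \{v : \dim N_v > 1\}$ is a clique, each $N_v$ is a finite-dimensional factor, say $N_v \cong \mathbb{M}_{n(v)}$, each $\pi|_{N_v}$ is unital and injective, and the algebras $\pi(N_v)$ pairwise commute in $N$. I would then pick matrix units $(e_{i,j}^{(v)})_{i,j=1}^{n(v)}$ in $N_v$ that diagonalize the faithful normal state $\varphi_v|_{N_v}$, so that $\varphi_v(e_{i,j}^{(v)}) = \mathbf{1}_{i=j} t_i^{(v)}$ with $t_i^{(v)} > 0$; from this data one forms $p_{\vec{\imath}}$, the partial isometries $e_{\vec{\imath},\vec{\jmath}}^{(\cK)} = \prod_{v \in \cK} e_{i(v),j(v)}^{(v)}$, and the projections $q_{\vec{\imath}}$.

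The heart of the argument is to compute $\pi(q_{\vec{\imath}})$. Since $N$ is a direct summand, $\pi$ is multiplication by the central projection $1_N \in Z(M)$; in particular it is normal and carries the infimum of any family of projections in $M$ to the infimum of their images (for projections, $\bigwedge_\alpha \pi(p_\alpha) = 1_N \bigwedge_\alpha p_\alpha = \pi(\bigwedge_\alpha p_\alpha)$). Applying this to $p_{\vec{\jmath}} = \bigwedge_{v \in \cV} e_{j(v),j(v)}^{(v)}$ and using that the $\pi(e_{j(v),j(v)}^{(v)})$ pairwise commute in $N$ (so their infimum is their product), together with $\pi(1_{N_v}) = 1_N$ for $v \notin \cK$, I would deduce $\pi(p_{\vec{\jmath}}) = \pi\bigl(\prod_{v \in \cK} e_{j(v),j(v)}^{(v)}\bigr) = \pi(e_{\vec{\jmath},\vec{\jmath}}^{(\cK)})$. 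Hence every term $\pi(e_{\vec{\imath},\vec{\jmath}}^{(\cK)})\pi(p_{\vec{\jmath}})\pi(e_{\vec{\jmath},\vec{\imath}}^{(\cK)})$ in the infimum defining $q_{\vec{\imath}}$ collapses to $\pi(e_{\vec{\imath},\vec{\imath}}^{(\cK)})$, so $\pi(q_{\vec{\imath}}) = \pi(e_{\vec{\imath},\vec{\imath}}^{(\cK)})$. Since the matrix-unit relations $e_{\vec{\imath},\vec{\jmath}}^{(\cK)}e_{\vec{k},\vec{l}}^{(\cK)} = \mathbf{1}_{\vec{\jmath}=\vec{k}}\,e_{\vec{\imath},\vec{l}}^{(\cK)}$ and $(e_{\vec{\imath},\vec{\jmath}}^{(\cK)})^* = e_{\vec{\jmath},\vec{\imath}}^{(\cK)}$ already hold in $M$, the projections $\pi(e_{\vec{\imath},\vec{\imath}}^{(\cK)})$ are mutually Murray--von Neumann equivalent in $N$ and $\sum_{\vec{\imath}} \pi(e_{\vec{\imath},\vec{\imath}}^{(\cK)}) = \pi\bigl(\prod_{v \in \cK} 1_{N_v}\bigr) = 1_N \neq 0$; therefore each of them, hence each $q_{\vec{\imath}}$, is nonzero. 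Lemma~\ref{lem: construction of f d summand and state} then applies and yields a direct summand $N'$ with central support $1_{N'} = \sum_{\vec{\imath}} q_{\vec{\imath}}$ and $(N',\varphi|_{N'}/\varphi(1_{N'})) \cong \bigotimes_{v \in \cK}(\mathbb{M}_{n(v)},\psi_v)$.

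Finally I would identify $N'$ with $N$. Applying $\pi$ to $1_{N'} = \sum_{\vec{\imath}} q_{\vec{\imath}}$ gives $\pi(1_{N'}) = \sum_{\vec{\imath}} \pi(e_{\vec{\imath},\vec{\imath}}^{(\cK)}) = 1_N$, i.e.\ $1_N 1_{N'} = 1_N$, so $1_N \leq 1_{N'}$ and thus $1_N \in N' = 1_{N'}M$. Being central in $M$, $1_N$ is central in $N'$; since $N'$ is a factor and $1_N \neq 0$, we get $1_N = 1_{N'}$. Hence $N = 1_N M = 1_{N'} M = N'$ as summands, with the same restricted state, and $q_{\vec{\imath}} = 1_N q_{\vec{\imath}} = \pi(q_{\vec{\imath}})$ realizing the diagonal matrix units; so $N$ has precisely the form of Lemma~\ref{lem: construction of f d summand and state}, and in particular $(N,\psi) \cong \bigotimes_{v \in \cK}(\mathbb{M}_{n(v)},\psi_v)$ (which also shows $N$ was finite-dimensional all along).

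The main obstacle is the collapse computation in the second paragraph. One must notice that although the diagonal matrix units $e_{j(v),j(v)}^{(v)}$ for distinct $v$ need not commute in $M$ (so $p_{\vec{\jmath}}$ is a genuine infimum, not a product), their images under $\pi$ do commute in $N$, so that each $p_{\vec{\jmath}}$ becomes the single diagonal matrix unit $\pi(e_{\vec{\jmath},\vec{\jmath}}^{(\cK)})$ and the intricate meet defining $q_{\vec{\imath}}$ becomes a single nonzero matrix unit in $N$. Once that collapse and the nonvanishing it provides are in hand, everything else is routine bookkeeping with central projections.
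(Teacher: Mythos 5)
Your argument is correct and follows essentially the same route as the paper: both exploit that $\pi$ (multiplication by the central projection $1_N$) preserves meets of projections to collapse $p_{\vec{\jmath}}$ and the conjugated family onto $\pi(e_{\vec{\imath},\vec{\imath}}^{(\cK)})$, deduce $q_{\vec{\imath}} \neq 0$ from $\sum_{\vec{\imath}} \pi(e_{\vec{\imath},\vec{\imath}}^{(\cK)}) = 1_N$, and then identify $N$ with the summand of Lemma \ref{lem: construction of f d summand and state} via its factoriality (the paper phrases the last step as $1_N \leq 1_{\tilde N}$ plus minimality of the ideal $\tilde N$, which is the same observation). The only cosmetic difference is that the paper proves the inequality $\pi(p_{\vec{\imath}}) \leq q_{\vec{\imath}}$ using $\pi(p) \leq p$, whereas you compute $\pi(q_{\vec{\imath}})$ directly; both hinge on the same facts.
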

				
				\begin{proof}
					By Lemma \ref{lem: general scenario clique}, $\cK = \{v \in \cV: \dim N_v > 1\}$ is a clique, and $N_v \cong \mathbb{M}_{n(v)}$ for some $n(v) \in \bN$.  Thus, assume that $N_v$ is given by matrix units as in Notation \ref{nota: fd summands in gp}, and recall $p_{\vec{\imath}} = \bigwedge_{v \in \cV} e_{i(v),i(v)}^{(v)}$.  Let $\tilde{N}$ be the finite-dimensional summand constructed in Lemma \ref{lem: construction of f d summand and state}.  We want to show that $N \subseteq \tilde{N}$.
					
					Since normal $*$-homomorphisms respect minima of projections, we have
					\[
					\pi(p_{\vec{\imath}}) = \bigwedge_{v \in \cV} \pi(e_{i(v),i(v)}^{(v)}) = \prod_{v \in \cV} \pi(e_{i(v),i(v)}^{(v)}) = \pi(e_{\vec{\imath},\vec{\imath}}^{(\cK)})
					\]
					where $e_{\vec{\imath},\vec{\jmath}}^{(\cK)}$ is as in Notation \ref{nota: fd summands in gp 2}.
					
					Note that for $x, y \in M$, we have $\pi(xy) = \pi(x) y = x \pi(y)$.  In particular, if $p$ is a projection, then $\pi(p) = \pi(p^2) = p \pi(p)$ and therefore $\pi(p) \leq p$.  Hence, $\pi(p_{\vec{\imath}}) \leq p_{\vec{\imath}}$.  We also have
					\[
					\pi(e_{\vec{\imath}, \vec{\jmath}}^{(\cK)} p_{\vec{\jmath}} e_{\vec{\jmath}, \vec{\imath}}^{(\cK)}) = e_{\vec{\imath}, \vec{\jmath}}^{(\cK)} \pi(p_{\vec{\jmath}}) e_{\vec{\jmath}, \vec{\imath}}^{(\cK)} = e_{\vec{\imath}, \vec{\jmath}}^{(\cK)} \pi(e_{\vec{\jmath},\vec{\jmath}}^{(\cK)}) e_{\vec{\jmath}, \vec{\imath}}^{(\cK)} = \pi(e_{\vec{\imath}, \vec{\jmath}}^{(\cK)} e_{\vec{\jmath},\vec{\jmath}}^{(\cK)} e_{\vec{\jmath}, \vec{\imath}}^{(\cK)}) = \pi(e_{\vec{\imath}, \vec{\imath}}^{(\cK)}) = \pi(p_{\vec{\imath}}).
					\]
					Therefore,
					\[
					\pi(p_{\vec{\imath}}) = \pi(e_{\vec{\imath}, \vec{\jmath}}^{(\cK)} p_{\vec{\jmath}} e_{\vec{\jmath}, \vec{\imath}}^{(\cK)}) \leq e_{\vec{\imath}, \vec{\jmath}}^{(\cK)} p_{\vec{\jmath}} e_{\vec{\jmath}, \vec{\imath}}^{(\cK)}.
					\]
					Taking the intersection over $\vec{\jmath} \in [\vec n]$, we obtain
					\[
					\pi(e_{\vec{\imath},\vec{\imath}}^{(\cK)}) = \pi(p_{\vec{\imath}}) \leq \bigwedge_{\vec{\jmath} \in [\vec n]} e_{\vec{\imath}, \vec{\jmath}}^{(\cK)} p_{\vec{\jmath}} e_{\vec{\jmath}, \vec{\imath}}^{(\cK)} =  q_{\vec{\imath}}.
					\]
					Therefore,
					\[
					1_N = \pi\left( \prod_{v \in \cK} 1_{N_v} \right) = \sum_{\vec{\imath} \in [\vec n]} \pi(e_{\vec{\imath},\vec{\imath}}^{(\cK)}) \leq \sum_{\vec{\imath} \in [\vec n]} q_{\vec{\imath}} = 1_{\tilde{N}}.
					\]
					Since $N$ and $\tilde{N}$ are ideals, this implies that $N \subseteq \tilde{N}$.  Since $\tilde{N}$ is a factor, it is a minimal nonzero ideal in $M$, and therefore $N = \tilde{N}$ as desired.
				\end{proof}
				
				\section{Evaluation of the power series} \label{sec: power series}
				
				\subsection{Cartier and Foata's formula} \label{subsec: power series formula}
				
				In Lemma \ref{lem: Hilbert space computation of projections}, we showed that the projections $q_{\vec{\imath}}$ are nonzero if and only if
				\[
				\sum_{\ell \in\bN_0} \sum_{w \in \mathcal{W}_\ell} (s^{(w_1)} - 1) \dots (s^{(w_\ell)} - 1) < \infty.
				\]
				Our next goal is to give criteria for convergence of the power series
				\[
				f(X) = \sum_{\ell \in \bN_0} \sum_{w \in \mathcal{W}_\ell} X_{w_1} \dots X_{w_\ell}, \text{ where } X = (X_v)_{v \in \cV}.
				\]
				%In order to apply the formula of Cartier and Foata, we relate this to another power series
				We will accomplish this by relating it to the power series summing over all words up to $\cG$-equivalence:
				\[
				\tilde{f}(X) = \sum_{w \in \overline\cW} X_{w_1} \dots X_{w_\ell},
				\]
				where $\overline\cW$ is a set of representatives for the $\cG$-equivalence classes of words.
				%where the words are considered up to equivalence with respect to $\cG$, and $\ell$ is the length of the word.
				The relationship between these two as formal power series follows by a fairly standard combinatorial argument; see, e.g., \cite[esp. \S2.2]{GJ2004}.
				Notice that an arbitrary word can be expressed uniquely by beginning with a reduced word and replacing each letter by one or more copies of that same letter.
				Since $t_v/(1 - t_v) = \sum_{k \in \bN} t_v^k$, this decomposition gives us the identity
				\[
				\tilde{f}((X_v)_{v \in \cV}) = f\left( \left( \frac{X_v}{1 - X_v} \right)_{v \in \cV} \right).
				\]
				Furthermore, since $z / (1 + z)$ is the inverse function of $z / (1 - z)$ as a formal power series, the substitution can be reversed into
				\[
				f((X_v)_{v \in \cV}) = \tilde{f}\left( \left( \frac{X_v}{1 + X_v} \right)_{v \in \cV} \right).
				\]
				Cartier and Foata \cite[Eqn. (1)]{CF1969} give the very nice equation:
				\begin{equation} \label{eq: CF summation formula}
					\tilde{f}((X_v)_{v \in \cV}) = \left( \sum_{\cK \subseteq \cG \text{clique}} (-1)^{|\cK|} \prod_{v \in \cK} X_v \right)^{-1}.
				\end{equation}
				In fact, this relation even holds for non-commutative power series: if the indeterminates $X_v$ commute according to the graph $\cG$, then
				\[
				\fK_{\cG}(X) = \sum_{\cK \subseteq \cG \text{clique}} (-1)^{|\cK|} \prod_{v \in \cK} X_v
				\]
				is well defined since each term is a product of mutually commuting variables.
				Moreover, $\fK_{\cG}(X)^{-1}$ is the summation of words in the monoid generated by the $X_v$'s, which are identified up to $\cG$-equivalence.
				For example, if $\cG$ has vertex set $[n]$ and all variables commute or no variables commute, they recover the known identities
				\begin{align*}
					\paren*{(1-X_1)\cdots(1-X_n)}^{-1} &= \sum_{\alpha_1, \ldots, \alpha_n} X_1^{\alpha_1}\cdots X_n^{\alpha_n} \\
					\paren*{1 - X_1 - \cdots - X_n}^{-1} &= \sum_{r\in\bN_0}\sum_{i_1, \ldots, i_r} X_{i_1}\cdots X_{i_r}.
				\end{align*}
				
				\subsection{Conclusion of the proof of the main results} \label{subsec: conclusion of proof}
				
				The formal power series relation \eqref{eq: CF summation formula} enables us to give several equivalent criteria for convergence of the series $f(x_v/(1-x_v))$ for real inputs $x_v$ (usually in $[0,1]$).
				
				\begin{lemma} \label{lem: convergence criteria}
					Fix $\cG$, and let $f$, $\tilde{f}$, and $\fK_\cG$ be as above.  Let $(x_v)_{v \in \cV} \in [0,1]^{\cV}$.  Then the following are equivalent:
					\begin{enumerate}[(1)]
						\item $f((x_v/(1-x_v))_{v \in \cV}) < \infty$ (and in particular each $x_v < 1$);
						\item $\tilde{f}(x) < \infty$;
						\item $\fK_{\cG}(y) > 0$ for each $y \in \prod_{v \in \cV} \{0,x_v\}$;
						\item $\fK_{\cG}(y) > 0$ for each $y \in \prod_{v \in \cV} [0,x_v]$;
						\item $\fK_{\cG}(\alpha x) > 0$ for $\alpha \in [0,1]$.
					\end{enumerate}
					Moreover, (2) - (5) are equivalent whenever $x \in [0,\infty)^{\cV}$.  Furthermore, whenever $x \in [0,\infty)^{\cV}$ and $\tilde{f}(x) < \infty$, we have $\tilde{f}(x) = 1/\fK_{\mathcal{G}}(x)$.
				\end{lemma}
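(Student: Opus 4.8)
The plan is to establish the cycle $(1)\Leftrightarrow(2)\Rightarrow(4)\Rightarrow(5)\Rightarrow(2)$ and, separately, $(3)\Leftrightarrow(4)$, using throughout two structural facts: that $f$ and $\tilde f$ have nonnegative coefficients (so, evaluated at nonnegative inputs in $[0,\infty]$, they are monotone in each coordinate), and that $\fK_\cG$ is \emph{multi-affine}, i.e.\ affine in each variable separately, since no vertex occurs twice in a clique monomial. Note first that $(4)\Rightarrow(3)$ and $(4)\Rightarrow(5)$ are immediate, because both the corner set $\prod_v\{0,x_v\}$ and the segment $\{\alpha x:\alpha\in[0,1]\}$ lie inside the box $\prod_v[0,x_v]$.

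For $(1)\Leftrightarrow(2)$ I would use the substitution identity $\tilde f(X)=f\big((X_v/(1-X_v))_v\big)$ --- the ``fattening'' bijection writing each word as a reduced word with every letter repeated at least once. Since both sides are power series with nonnegative coefficients, this is an equality of sums valued in $[0,\infty]$ as soon as each $x_v<1$ (which is what makes the substitution legal); and if some $x_v=1$ then $\tilde f(x)\ge\sum_{k\ge1}x_v^k=\infty$, so $(2)$ fails while $(1)$ is undefined, and we read it as false. For $(2)\Rightarrow(4)$: if $\tilde f(x)<\infty$ then by monotonicity $\tilde f(y)<\infty$, hence absolutely convergent, for every $y$ in the box; multiplying the series by the polynomial $\fK_\cG(y)$ and rearranging (legitimate by absolute convergence), the formal identity $\tilde f(X)\fK_\cG(X)=1$ from \eqref{eq: CF summation formula} becomes the numerical identity $\tilde f(y)\fK_\cG(y)=1$, so $\fK_\cG(y)=1/\tilde f(y)>0$ (the empty word gives $\tilde f(y)\ge1$). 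For $(3)\Rightarrow(4)$: because $\fK_\cG$ is affine in each variable, a routine induction on the coordinates --- fix all but one variable, which leaves an affine function whose extrema on an interval occur at the endpoints --- shows the minimum of $\fK_\cG$ over the box is attained at one of its corners, so positivity at all corners forces positivity on the whole box.

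The analytic core is $(5)\Rightarrow(2)$. Set $g(\alpha)=\tilde f(\alpha x)=\sum_{\ell\ge0}c_\ell\alpha^\ell$, where $c_\ell\ge0$ is the sum of $\prod_i x_{w_i}$ over the length-$\ell$ words $w$ up to $\cG$-equivalence; since there are at most $|\cV|^\ell$ such words and each $x_v\le1$, we get $c_\ell\le|\cV|^\ell$, so the radius of convergence $R$ of $g$ is positive and $c_0=1$. Substituting $X_v\mapsto\alpha x_v$ into $\tilde f(X)\fK_\cG(X)=1$ gives $g(\alpha)\,p(\alpha)=1$ as formal --- hence, on $|\alpha|<R$, honest holomorphic --- power series in $\alpha$, where $p(\alpha)=\fK_\cG(\alpha x)$ is a polynomial with $p(0)=1$; thus $g=1/p$ on the disc $|\alpha|<R$. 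If $R=\infty$ we are done, since then $g(1)<\infty$. If $R<\infty$, Pringsheim's theorem (a power series with nonnegative coefficients is singular at its radius of convergence) says $g$ has no holomorphic extension to a neighborhood of $\alpha=R$; but $1/p$ is holomorphic near any point where $p$ is nonzero, so $p(R)=0$, i.e.\ $R$ is a positive real root of $\alpha\mapsto\fK_\cG(\alpha x)$. Hypothesis $(5)$ forbids roots in $[0,1]$, so $R>1$, whence $g$ converges at $\alpha=1$ and $\tilde f(x)=g(1)<\infty$, which is $(2)$; this closes the cycle.

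The step I expect to be the main obstacle is $(5)\Rightarrow(2)$: converting the purely combinatorial Cartier--Foata identity into the analytic statement that the nearest singularity to the origin of $g(\alpha)=\tilde f(\alpha x)$ is a root, on the positive real axis, of the explicit polynomial $\fK_\cG(\alpha x)$, via Pringsheim. The justification of termwise rearrangement in $(2)\Rightarrow(4)$ is a secondary technical point, but it is routine given that all terms are nonnegative.
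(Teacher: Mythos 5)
Your proof is correct, and while it rests on the same two pillars as the paper's argument — the Cartier--Foata reciprocal identity $\fK_\cG(X)\tilde f(X)=1$ and the multi-affinity of $\fK_\cG$ — it takes a different route at both analytic steps. For the easier one, the paper proves (2)$\Rightarrow$(3) by viewing $\tilde f$ as continuous on a closed polydisc and analytic in its interior, upgrading the formal identity to $\fK_\cG(y)\tilde f(y)=1$ via the identity theorem and continuity, and then passes from (3) to (4) by multi-affinity; you instead prove (2)$\Rightarrow$(4) in one shot by a Fubini-type rearrangement of the absolutely convergent product $\fK_\cG(y)\tilde f(y)$, which avoids the several-variable identity theorem and yields positivity on the whole box directly, keeping (3)$\Leftrightarrow$(4) as a separate multi-affine observation. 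For the hard step, the paper argues (5)$\Rightarrow$(2) by contrapositive: restricting to the ray $\alpha\mapsto\tilde f(\alpha x)$, using monotonicity and lower semicontinuity to locate the blow-up point $r$, and concluding $\fK_\cG(rx)=\lim_{\alpha\to r^-}1/\tilde f(\alpha x)=0$. You argue the implication directly: $g(\alpha)=\tilde f(\alpha x)$ agrees with $1/p(\alpha)$, $p(\alpha)=\fK_\cG(\alpha x)$, inside its radius of convergence $R$, and Pringsheim's theorem forces $p(R)=0$ whenever $R<\infty$, so hypothesis (5) pushes $R>1$ and hence $g(1)=\tilde f(x)<\infty$. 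This Pringsheim route in fact handles cleanly the one delicate point of this step — excluding the possibility that the nonnegative series converges at its radius of convergence, which is what underlies the paper's assertion that the blow-up value is attained — so it is a perfectly valid, and arguably more transparent, substitute. The supporting details you supply (the bound $c_\ell\le|\cV|^\ell$ giving a positive radius, the legitimacy of the substitution $X_v\mapsto\alpha x_v$ as a homomorphism of formal power series, and the treatment of the degenerate case $x_v=1$ in (1)$\Leftrightarrow$(2)) are all correct, and your implication graph, (1)$\Leftrightarrow$(2)$\Rightarrow$(4)$\Rightarrow$(5)$\Rightarrow$(2) together with (3)$\Leftrightarrow$(4), does establish the full equivalence.
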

				
				\begin{proof}
					(1) $\iff$ (2).
					As mentioned above, viewing $f$ and $\tilde f$ as formal power series we have
					\[
					f((X_v/(1-X_v))_{v \in \cV}) = \tilde{f}(X).
					\]
					%where we expand $x_v / (1 - x_v) = \sum_{k \geq 1} x_v^k$.
					If we expand either sum, all the coefficients are non-negative, as are all the inputs $x_v$,
					%Since $x_v \in [0,1]$, all these terms are nonnegative,
					and hence $f((x_v/(1-x_v))_{v\in \cV}) < \infty$ if and only if $\tilde{f}(x) < \infty$.
					
					For the rest of the proof, we assume that $x \in [0,\infty)^{\cV}$ rather than $[0,1]^{\cV}$.
					
					(2) $\implies$ (3).  Suppose $\tilde{f}(x) < \infty$. By direct comparison the series for $\tilde{f}$ converges absolutely at a complex tuple $z = (z_v)_{v \in \cV}$ if $|z_v| \leq x_v$.  Since $1 / \fK_{\cG}$ agrees with $\tilde{f}$ as a formal power series, we have $\fK_{\cG}(z) f(z) = 1$ at every point $z$ for which two power series converge absolutely, and of course $\fK_{\cG}$ always converges absolutely since it is a polynomial.  In particular, if $y_v \in \{0,x_v\}$, then $\tilde{f}(y)$ converges absolutely and $\fK_{\cG}(y) \tilde{f}(y) = 1$.  Since $\tilde{f}(y)$ has nonnegative terms, we have $\tilde{f}(y) \geq 0$, and therefore $\fK_{\cG}(y) > 0$.
					
					(3) $\implies$ (4).  Note that the terms in $\fK_\cG$ do not repeat the same variable twice, and so $\fK_\cG$ is an affine function of each variable individually.  Therefore, if $y$ and $y' \in [0,\infty)^{\cV}$ agree on $|\cV| - 1$ coordinates, and $\fK_{\cG}(y) > 0$ and $\fK_{\cG}(y') > 0$, then $\fK_{\cG}((1-\lambda)y + \lambda y') > 0$ for $\lambda \in [0,1]$.  Therefore, if we know that $\fK_\cG > 0$ on the corners of the rectangle $\prod_{v \in \cV} [0,t_v]$, then $\fK_\cG > 0$ on the entire rectangle.
					
					(4) $\implies$ (5).  The line segment from $0$ to $x$ is contained in the rectangle $\prod_{v \in \cV} [0,x_v]$.
					
					(5) $\implies$ (2).  We proceed by contrapositive.  Fix $x \in [0,\infty)^{\cV}$ such that $h(x) = +\infty$.  Consider the single-variable power series in $\alpha$ given by
					\[
					h(\alpha) = \tilde{f}(\alpha x) = \sum_{\ell=0}^\infty \left( \sum_{w \in \overline{W}_\ell} x_{w(1)} \dots x_{w(\ell)} \right) \alpha^\ell,
					\]
					where $\overline{W}_\ell$ is the set of words of length $\ell$ in $\overline{\mathcal{W}}$, and the single-variable polynomial
					\[
					p(\alpha) = \fK_{\cG}(\alpha x),
					\]
					so that $h = 1/p$ as formal power series.  Let $r$ be the radius of convergence of $h$.  Since $h(1) = +\infty$ by assumption, we have $r \leq 1$.  By complex analysis, $r$ is the minimal distance of any root of $p$ from $0$.  Thus, there is some $\lambda$ with $|\lambda| = 1$ and $p(r \lambda) = 0$.  Since the power series $h$ converges on the open disk of radius $r$ and since it has nonnegative terms, we have
					\[
					\liminf_{\alpha \to r^+} h(\alpha) \geq \liminf_{\alpha \to r^+} |h(\alpha \lambda)| = \liminf_{\alpha \to r^+} \frac{1}{|p(\alpha \lambda)|} = +\infty.
					\]
					Moreover, by applying the monotone convergence theorem to the series for $h$,
					\[
					h(r) = \lim_{\alpha \to r^+} h(\alpha) = +\infty.
					\]
					Hence, we have
					\[
					p(r) = \lim_{\alpha \to r^+} p(\alpha) = \lim_{\alpha \to r^+} \frac{1}{h(\alpha)} = 0.
					\]
					Therefore, $r \in [0,1]$ and $\fK_{\cG}(rx) = 0$ as desired.
					
					The final claim that $\tilde{f}(x) = 1 / \fK_{\cG}(x)$ whenever the left-hand side was already noted in the proof of (2) $\implies$ (3) above.
				\end{proof}
				
				\begin{proof}[Proof of Theorem \ref{thm: classification of finite-dimensional summands}]
					Consider the same setup as Notations \ref{nota: fd summands in gp} and \ref{nota: fd summands in gp 2}.  In Lemma \ref{lem: Hilbert space computation of projections}, we showed that the projections $q_{\vec{\imath}}$ are nonzero if and only if
					\[
					f((s^{(v)} - 1)_{v \in \cV}) < \infty, \text{ where } s^{(v)} = \sum_{j=1}^{n(v)} \frac{1}{t_j^{(v)}} = \sum_{j=1}^{n(v)} \frac{1}{\alpha^{(v)} \lambda_j^{(v)}}.
					\]
					Let
					\[
					x_v = \frac{s^{(v)} - 1}{(s^{(v)} - 1) + 1} = \frac{s^{(v)} - 1}{s^{(v)}} = 1 - 1/s^{(v)},
					\]
					so that
					\[
					s^{(v)} - 1 = \frac{x_v}{1-x_v}.
					\]
					Therefore, by Lemma \ref{lem: convergence criteria},
					\[
					f((s^{(v)} - 1)_{v \in \cV}) = f\left( \left(\frac{x_v}{1-x_v} \right)_{v \in \cV} \right) < \infty \iff \fK_{\cG}(y) > 0 \text{ for each } y \in \prod_{v \in \cV} \{0,x_v\}.
					\]
					Recall in Theorem \ref{thm: classification of finite-dimensional summands}, we defined
					\[
					\fK_{\cG'}(\vec{x}) = \sum_{\substack{\cK \subseteq \cG' \\ \text{clique}}} (-1)^{|\cK|} \prod_{v \in \cK} x_v.
					\]
					This is exactly the expression obtained by substituting $x_v$ for the variables with $v \in \cV'$ and $0$ for the variables with $v \not \in \cV'$.
					Hence, $q_{\vec{\imath}} \neq 0$ if and only if $\fK_{\cG'}((1 - 1/s^{(v)})_{v \in \cV'}) > 0$ for all induced subgraphs $\cG' \subseteq \cG$.
					
					By Lemma \ref{lem: Hilbert space computation of projections} again (as well as the final claim of Lemma \ref{lem: convergence criteria}), when $q_{\vec{\imath}}$ is nonzero, we have
					\begin{align*}
						\varphi(q_{\vec{\imath}}) &= \prod_{v \in \cV} t_{i(v)}^{(v)} \prod_{v \in \cV} s^{(v)}  \left( \sum_{\ell \in\bN_0} \sum_{w \in \mathcal{W}_\ell}(s^{(w_1)}-1) \dots (s^{(w_\ell)} - 1) \right)^{-1} \\
						&= \prod_{v \in \cV} t_{i(v)}^{(v)} \prod_{v \in \cV} s^{(v)} \fK_{\cG}((1 - 1/s^{(v)})_{v \in \cV}) \\
						&= \prod_{v \in \cV} t_{i(v)}^{(v)} \prod_{v \in \cV} s^{(v)} \sum_{\substack{\cK \subseteq \cG \\ \text{clique}}} (-1)^{|\cK|}\prod_{v\in \cK}(1 - 1/s^{(v)}) \\
						&= \prod_{v \in \cV} t_{i(v)}^{(v)} \sum_{\substack{\cK \subseteq \cG \\ \text{clique}}} \prod_{v \in \cV  \setminus \cK} s^{(v)} \prod_{v \in \cK} (1 - s^{(v)}).
					\end{align*}
					Now recall that the projections $q_{\vec{\imath}}$ sum up to the unit $1_N$ in the direct summand $(N,\psi)$.  Moreover,
					\[
					\sum_{\vec{\imath} \in [\vec n]} \prod_{v \in \cV} t_{i(v)}^{(v)} = \sum_{\vec{\imath} \in [\vec n]} \prod_{v \in \cV} \alpha^{(v)} \lambda_{i(v)}^{(v)} = \prod_{v \in \cV} \left[ \alpha^{(v)} \sum_{i \in [n(v)]} \lambda_i^{(v)} \right] = \prod_{v \in \cV } \alpha^{(v)}.
					\]
					Hence, by summing up our formula for $\varphi(q_{\vec{\imath}})$, we obtain
					\[
					\alpha = \varphi(1_N) = \sum_{\vec{\imath} \in [\vec n]} \varphi(q_{\vec{\imath}}) = \prod_{v \in \cV} \alpha^{(v)} \sum_{\substack{\cK \subseteq \cV \\ \text{clique}}} \prod_{v \in \cV \setminus \cK} s^{(v)} \prod_{v \in \cK} (1 - s^{(v)}),
					\]
					which is the formula asserted in Theorem \ref{thm: classification of finite-dimensional summands}.  Finally, Lemma \ref{lem: realization of all f d summands} shows that all the finite-dimensional factor summands of the graph product must arise from the construction of Lemma \ref{lem: construction of f d summand and state}, as asserted in Theorem \ref{thm: classification of finite-dimensional summands}.
				\end{proof}
				
				\begin{proof}[Proof of Theorem \ref{thm: classification of B(H) summands}]
					Let $(N,\psi)$ be a direct summand in the graph product which is isomorphic to $B(H)$ for $H$ infinite-dimensional.  Using Notation \ref{nota: general f d summand}, let $\cV_1$ be the set of vertices where $(N_v,\psi_v)$ is infinite-dimensional. By Lemma \ref{lem: infinite-dimensional join decomposition}, the vertices in $\cV_1$ are adjacent to all vertices in $\cG$, and so $\cG_1$ is complete and $\cG$ decomposes as a graph join $\cG_1 + \cG_2$.  Consequently,
					\[
					(M,\varphi) = \bigotimes_{v \in \cV_1} (M_v,\varphi_v) \otimes \gp_{v \in \cG_2} (M_v,\varphi_v).
					\]
					By distributing tensor products over direct sums, it is easy to see that $(N,\varphi)$ must be the tensor product of type I factor direct summands $(N_v,\varphi_v)$ in each $(M_v,\varphi_v)$ for $v \in \cV_1$ and a type I factor direct summand $(Q,\omega)$ in the graph product $(M_{\cG_2},\varphi_{\cG_2}) := \gp_{v \in \cG_2} (M_v,\varphi_v)$.  Let $\pi'$ be the projection map $M_{\cG_2} \to Q$ and for $v \in \cV_2$, let $Q_v$ be the complementary summand to $\ker(\pi'|_{M_v})$ as in Notation \ref{nota: general f d summand}. Given the tensor product form of $(N,\varphi)$, we have $\ker(\pi|_{M_v}) = \ker(\pi'|_{M_v})$ for $v \in \cV_2$, and hence $Q_v = N_v$.  In particular, $Q_v$ is finite-dimensional.  Hence, we can apply Lemma \ref{lem: realization of all f d summands} to the direct summand $(Q,\omega)$ in the graph product $(M_{\cG_2},\varphi_{\cG_2})$, so $(Q,\omega)$ is a finite-dimensional factor summand of the form classified by Theorem \ref{thm: classification of finite-dimensional summands}.
				\end{proof}
				
				\begin{proof}[Proof of Theorem \ref{thm: intersection of projections}]
					Let $(M,\varphi)$ be the graph product of $(M_v,\varphi_v)$ for $v \in \cV$.  Let $p_v$ be a projection in $M_v$.  Recall that $\bigwedge_{v \in \cV} p_v \in \mathrm{W}^*((p_v)_{v \in \cV})$, and so whether $\bigwedge_{v \in \cV}p_v$ is nonzero only depends on the behavior of $\mathrm{W}^*((p_v)_{v \in \cV})$ and the state restricted to this von Neumann algebra.  Therefore, it suffices to prove Theorem \ref{thm: intersection of projections} in the case where $M_v = \C p_v \oplus \C(1 - p_v)$.  We are thus in the situation of Notations \ref{nota: fd summands in gp} and \ref{nota: fd summands in gp 2} with $N_v = \C p_v$ and $n(v) = 1$ for all $v$.  Hence, in this case, $\alpha^{(v)} = \varphi_v(p_v)$ and $\lambda_1^{(v)} = 1$ and so $t_1^{(v)} = \alpha^{(v)}$ and $s^{(v)} = 1 / \varphi_v(p_v)$.  Observe that in the notation of Theorem \ref{thm: classification of finite-dimensional summands}, each $N_v$ only has a single associated matrix unit $e_{1,1}^{(v)}$, we have $p_{\vec 1} = \bigwedge_{v \in \cV} p_v$, the clique $\cK$ is empty, and so
					\[
					1_N = q_{\vec 1} = p_{\vec 1} = \bigwedge_{v \in \cV} p_v.
					\]
					Hence, by Lemma \ref{lem: Hilbert space computation of projections} and Lemma \ref{lem: convergence criteria}, as in the proof of Theorem \ref{thm: classification of finite-dimensional summands}, we have
					\[
					\bigwedge_{v \in \cV } p_v \neq 0 \iff \forall \cG' \subseteq \cG, \fK_{\cG'}((1 - 1/s^{(v)})_{v \in \cV'}) = \fK_{\cG'}((1 - \varphi_v(p_v))_{v \in \cV'}) > 0,
					\]
					where $\cG'$ ranges over induced subgraphs of $\cG$ with vertex set $\cV'$.
					Moreover, the last formula in Theorem \ref{thm: classification of finite-dimensional summands} reduces to
					\[
					\alpha = \varphi(1_N) = \varphi \left( \bigwedge_{v \in \cV } p_v \right) = \fK_{\cG}((1 - \varphi_v(p_v))_{v \in \cV})
					\]
					as asserted in Theorem \ref{thm: intersection of projections}.
				\end{proof}
				
				\subsection{Geometry of the region of convergence} \label{subsec: region}
				
				In light of Theorem \ref{thm: intersection of projections} and Theorem \ref{thm: classification of finite-dimensional summands}, to determine whether a graph product has atoms, one must test positivity of polynomials $\fK_{\cG}(x)$ where $x_v = 1 - 1/s^{(v)}$.    Hence, to understand how the atoms behave as the parameters $\alpha^{(v)}$ and $\lambda_j^{(v)}$ vary, one needs to understand the region in $[0,1]^{\cV}$ where the polynomials $\fK_{\cG'}(x)$ are positive for all induced subgraphs $\cG' \subseteq \cG$.  We denote this region by
				\begin{equation} \label{eq: region of G}
					R(\cG) := \{x \in [0,1]^{\cV}: \fK_{\cG'}(x|_{\cG'}) > 0 \text{ for induced } \cG' \subseteq \cG \}.
				\end{equation}
				We also remark that
				\begin{equation} \label{eq: region of G}
					R(\cG) := \{x \in [0,\infty)^{\cV}: \fK_{\cG'}(x|_{\cG'}) > 0 \text{ for induced } \cG' \subseteq \cG \};
				\end{equation}
				indeed, if $\fK_{\cG'}(x|_{\cG'}) > 0$ for induced $\cG' \subseteq \cG$, then by applying this to $\cG' = \{v\}$, we obtain $1 - x_v > 0$ and so $x_v < 1$; hence, $x \in [0,1]^{\cV}$ automatically.  In this section, we will show that $R(\cG)$ is the connected component of $[0,\infty)^{\cV} \cap \{\fK_{\cG} > 0\}$ containing $0$, and in many cases the boundary in the positive orthant is smooth.  We start by computing the derivatives of $\fK_{\cG}$ in hopes of applying the implicit function theorem.
				
				\begin{lemma} \label{lem: derivative computation}
					Let $\cG$ be a graph and let $x = (x_v)_{v \in \cV} \in \C^{\cV}$.  For each vertex $j$, let $\mathcal{S}(j)$ be the induced subgraph with vertex set $\{i: (i,j) \in \cE  \}$.  Then
					\[
					\partial_j \fK_{\cG}(x) = -\fK_{\mathcal{S}(j)}(x|_{\mathcal{S}(j)}), \text{ where } x|_{\mathcal{S}(j)} = (x_v)_{v \in \mathcal{S}(j)}.
					\]
				\end{lemma}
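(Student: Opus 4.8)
The plan is to exploit the fact that every clique $\cK$ is a \emph{set}, so each monomial $\prod_{v\in\cK}x_v$ in $\fK_{\cG}$ contains $x_j$ to at most the first power. Hence $\fK_{\cG}(x)$ is affine in the variable $x_j$, and $\partial_j\fK_{\cG}(x)$ is simply the coefficient of $x_j$, namely
\[
\partial_j \fK_{\cG}(x) = \sum_{\substack{\cK\subseteq\cG\ \text{clique}\\ j\in\cK}} (-1)^{|\cK|}\prod_{v\in\cK\setminus\{j\}} x_v.
\]
So the whole statement reduces to a reindexing of this sum.

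The key combinatorial observation is that the map $\cK\mapsto\cK'=\cK\setminus\{j\}$ is a bijection from the set of cliques of $\cG$ containing $j$ onto the set of cliques of $\mathcal{S}(j)$. Indeed, if $\cK$ is a clique and $j\in\cK$, then every other vertex of $\cK$ is adjacent to $j$, so $\cK'\subseteq\cV(\mathcal{S}(j))$, and $\cK'$ is a clique in the subgraph induced on the neighborhood of $j$ (being a subset of the clique $\cK$, and $\mathcal{S}(j)$ is an \emph{induced} subgraph, so adjacencies are inherited). Conversely, given a clique $\cK'$ in $\mathcal{S}(j)$, every vertex of $\cK'$ is adjacent to $j$ and the vertices of $\cK'$ are mutually adjacent, so $\cK'\cup\{j\}$ is a clique of $\cG$ containing $j$. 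These two operations are mutually inverse. Under this bijection $|\cK|=|\cK'|+1$, so $(-1)^{|\cK|}=-(-1)^{|\cK'|}$, and $\prod_{v\in\cK\setminus\{j\}}x_v=\prod_{v\in\cK'}x_v$ depends only on $x|_{\mathcal{S}(j)}$.

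Substituting, I obtain
\[
\partial_j \fK_{\cG}(x) = \sum_{\substack{\cK'\subseteq\mathcal{S}(j)\\ \text{clique}}} \bigl(-(-1)^{|\cK'|}\bigr)\prod_{v\in\cK'} x_v = -\,\fK_{\mathcal{S}(j)}\bigl(x|_{\mathcal{S}(j)}\bigr),
\]
which is the claimed identity; the empty clique $\cK=\{j\}$ on the left corresponds to the empty clique $\varnothing$ on the right, matching the constant term $-1$ of $-\fK_{\mathcal{S}(j)}$. There is no real obstacle here: the only point requiring a line of care is checking that the correspondence between cliques through $j$ and cliques in the induced neighborhood subgraph is genuinely a bijection, which uses exactly that $\mathcal{S}(j)$ is the \emph{induced} subgraph on the neighbors of $j$.
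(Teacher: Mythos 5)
Your proof is correct and follows the same route as the paper: differentiate term by term using that $\fK_{\cG}$ is affine in $x_j$, then observe the bijection $\cK \mapsto \cK\setminus\{j\}$ between cliques of $\cG$ containing $j$ and cliques of the induced neighborhood $\mathcal{S}(j)$, with the sign flip from $|\cK|=|\cK'|+1$. You spell out the bijection a bit more explicitly than the paper, but the argument is the same.
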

				
				\begin{proof}
					Recall $\fK_{\cG}(x)$ has one term $(-1)^{|\cK|} \prod_{v \in \cK} x_v$ for each clique $\cK$.  If $\cK$ does not contain vertex $j$, then the derivative with respect to $x_j$ is zero.  If $\cK$ does contain $j$, then by definition every vertex $v$ in $\cK$ other than $j$ must be in $\mathcal{S}(j)$, and so $\cK = \cK' \cup \{j\}$, where $\cK'$ is a clique in $\mathcal{S}(j)$.  Differentiating with respect to $x_j$ then produces $-(-1)^{|\cK'|} \prod_{v \in \cK'} x_v$, which yields exactly all the terms in $\fK_{\mathcal{S}(j)}(x|_{\mathcal{S}(j)})$.
				\end{proof}
				
				We can now determine the topological behavior of $R(\cG)$.
				
				\begin{lemma} \label{lem: homeomorphism}
					For a unit vector $u$ in the nonnegative orthant of $\R^{\cV}$, let
					\[
					\rho(u) = \inf \{r \in [0,\infty): \min_{\cG' \subseteq \cG} \fK_{\cG'}(ru|_{\mathcal{G}'}) = 0\},
					\]
					where the minimum is taken over all induced subgraphs $\cG'$.  Then
					\begin{enumerate}
						\item $\rho(u) = \min \{r \in [0,\infty): \fK_{\cG}(ru) = 0 \}$.
						\item If $x \in [0,1]^{\cV} \setminus \set 0$, then $x \in R(\cG)$ if and only if $|x| < \rho(x / |x|)$.
						\item The mapping $u \mapsto \rho(u)$ is continuous.
						\item There is a homeomorphism
						\[
						\{y \in [0,1]^{\cV}: |y| < 1 \} \to R(\cG): y \mapsto \rho(y/|y|) y,
						\]
						where the right-hand side is interpreted as $0$ when $y = 0$.
					\end{enumerate}
				\end{lemma}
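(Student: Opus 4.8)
The plan is to establish the four parts in turn: parts (1) and (2) follow directly from Lemma~\ref{lem: convergence criteria} and Lemma~\ref{lem: derivative computation}; part (3) I would prove by induction on $|\cV|$; and part (4) is then a formal consequence of (1)--(3). I begin with a finiteness bound that underlies everything: if $v_0$ is a coordinate at which $u$ is maximal, the single-vertex subgraph $\{v_0\}$ contributes $\fK_{\{v_0\}}(ru) = 1 - r u_{v_0}$, which vanishes at $r = 1/u_{v_0} = 1/\max_v u_v$; since $r \mapsto \min_{\cG'}\fK_{\cG'}(ru)$ is continuous and equals $1$ at $r=0$, it must reach $0$ by $r = 1/\max_v u_v$, so $\rho(u) \le 1/\max_v u_v < \infty$, the infimum is attained, $\min_{\cG'}\fK_{\cG'}(ru) > 0$ for $r \in [0,\rho(u))$, and $ru \in R(\cG)$ for such $r$. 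For (1), suppose $\fK_\cG(\rho(u)u) > 0$. Then $\fK_\cG(\alpha\rho(u)u) > 0$ for all $\alpha \in [0,1]$ (for $\alpha<1$ because $\alpha\rho(u)u \in R(\cG)$, and at $\alpha=1$ by assumption), so $x := \rho(u)u$ lies in $[0,1]^\cV$ and satisfies condition (5) of Lemma~\ref{lem: convergence criteria}, hence condition (3): $\fK_\cG(y) > 0$ for all $y \in \prod_v\{0,x_v\}$. Since the restriction of $\fK_\cG$ to the coordinate subspace indexed by $\cV'$ is precisely $\fK_{\cG'}$, this says $\fK_{\cG'}(x|_{\cG'}) > 0$ for every induced $\cG'$, contradicting $\min_{\cG'}\fK_{\cG'}(\rho(u)u) = 0$. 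Hence $\fK_\cG(\rho(u)u) = 0$, and as $\fK_\cG(ru) > 0$ for $r < \rho(u)$, this is the first zero of $r \mapsto \fK_\cG(ru)$, which is (1).

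For (2) I first note that $R(\cG)$ is star-shaped about $0$: if $x \in R(\cG)$ then, using again that faces of $\fK_\cG$ are the $\fK_{\cG'}$, we have $\fK_\cG > 0$ at all corners of $\prod_v[0,x_v]$, so by (3)~$\Rightarrow$~(4) of Lemma~\ref{lem: convergence criteria}, $\fK_\cG > 0$ on the whole box, hence at the corners of each sub-box $\prod_v[0,tx_v]$, so $tx \in R(\cG)$ for $t \in [0,1]$. Thus for $x \in [0,1]^\cV \setminus\{0\}$ with $u = x/|x|$, star-shapedness gives $x \in R(\cG)$ iff $ru \in R(\cG)$ for all $r \in [0,|x|]$ iff $\min_{\cG'}\fK_{\cG'}(ru) > 0$ on $[0,|x|]$, which by the first paragraph is exactly $|x| < \rho(u)$.

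Part (3) is the substantive step; I prove it by induction on $|\cV|$, the base case $|\cV|\le 1$ being trivial since then $\rho$ is constant. Lower semicontinuity is easy: for $r_1 < \rho(u_0)$ the polynomial $\fK_\cG(\cdot\,u_0)$ is bounded below by some $\delta>0$ on $[0,r_1]$, so $\fK_\cG(ru') > 0$ on $[0,r_1]$ for $u'$ near $u_0$, whence $\rho(u') > r_1$ by (1); letting $r_1 \uparrow \rho(u_0)$ gives $\liminf_{u'\to u_0}\rho(u') \ge \rho(u_0)$. For upper semicontinuity set $\rho_0 = \rho(u_0)$, $x_0 = \rho_0 u_0$, $P(r) = \fK_\cG(ru_0)$; by (1) $P(\rho_0)=0$, and by Lemma~\ref{lem: derivative computation} $P'(r) = -\sum_j (u_0)_j\,\fK_{\mathcal{S}(j)}(ru_0|_{\mathcal{S}(j)})$, where each $\fK_{\mathcal{S}(j)}(x_0|_{\mathcal{S}(j)}) \ge 0$ as a limit of positive values along $ru_0 \in R(\cG)$, so $P'(\rho_0) \le 0$. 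If $P'(\rho_0) < 0$, then $P < 0$ just past $\rho_0$, so fixing $r_1 \in (\rho_0,\rho_0+\epsilon)$ with $\fK_\cG(r_1 u_0) < 0$ we get $\fK_\cG(r_1 u') < 0$ for $u'$ near $u_0$, hence $\rho(u') \le r_1 < \rho_0 + \epsilon$. If $P'(\rho_0) = 0$, the vanishing of a sum of nonnegative terms forces $\fK_{\mathcal{S}(j)}(x_0|_{\mathcal{S}(j)}) = 0$ for every $j$ with $(u_0)_j > 0$; fix such a $j$. Then $v_j := u_0|_{\mathcal{S}(j)} \neq 0$ (else $\fK_{\mathcal{S}(j)}$ would equal $1$ there), and since $\fK_{\mathcal{S}(j)}(r v_j) > 0$ for $r < \rho_0$, the value $\rho_0|v_j|$ is the first zero of $s \mapsto \fK_{\mathcal{S}(j)}(s\,v_j/|v_j|)$, i.e.\ $\rho_0 = \rho_{\mathcal{S}(j)}(v_j/|v_j|)/|v_j|$ by part (1) applied to the smaller graph $\mathcal{S}(j)$. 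For $u'$ near $u_0$ the vector $v'_j := u'|_{\mathcal{S}(j)}$ is still nonzero, and bounding $\rho(u')$ by the first $r$ at which the single subgraph $\mathcal{S}(j)$ produces a zero gives $\rho(u') \le \rho_{\mathcal{S}(j)}(v'_j/|v'_j|)/|v'_j|$; since $\mathcal{S}(j)$ has fewer vertices than $\cG$, the inductive hypothesis makes the right-hand side continuous in $u'$, so it tends to $\rho_0$ and $\limsup_{u'\to u_0}\rho(u') \le \rho_0$. The main obstacle is exactly this degenerate case, where $\fK_\cG$ merely touches zero along the ray $u_0$ without changing sign so that no nearby perturbation is forced negative; the resolution is to pass to the neighbor-subgraph $\mathcal{S}(j)$ and invoke the induction there.

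Finally, (4) follows formally. By (3), $\rho$ is continuous on the compact nonnegative unit sphere, and it is strictly positive there (since $\fK_\cG(0)=1$), hence bounded between two positive constants; therefore $y \mapsto \rho(y/|y|)\,y$ and the candidate inverse $x \mapsto x/\rho(x/|x|)$ are continuous away from $0$ and extend continuously at $0$, and part (2) identifies the image of the first map as exactly $R(\cG)$ and gives the bijectivity (for $x \in R(\cG)\setminus\{0\}$ the point $y = x/\rho(x/|x|)$ satisfies $|y|<1$ and maps back to $x$). Hence the map is the asserted homeomorphism.
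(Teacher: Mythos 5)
Your proposal is correct, and parts (1), (2), and (4) follow essentially the same path as the paper, so I'll focus on part (3), where your argument genuinely diverges. The paper proves upper semicontinuity of $\rho$ at $u^*$ by choosing a \emph{minimal} induced subgraph $\cG'$ with $\fK_{\cG'}(\rho(u^*)u^*) = 0$; by Lemma \ref{lem: derivative computation} the partials of $\fK_{\cG'}$ are (negatives of) $\fK_{\cG'\cap\cS(j)}$, which minimality forces to be strictly positive at the boundary point, so the radial derivative of $\fK_{\cG'}$ is strictly negative and the implicit function theorem produces a smooth local function $\widehat\rho$ with $\fK_{\cG'}(\widehat\rho(u)u)=0$, which dominates $\rho$ and yields openness of $\{\rho < r^*\}$. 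This argument is self-contained --- no induction is needed, and the degeneracy (radial derivative of $\fK_\cG$ vanishing) is sidestepped entirely by passing to the minimal vanishing face before computing any derivatives.

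You instead work with $P(r)=\fK_\cG(ru_0)$ directly, split on the sign of $P'(\rho_0)$, and in the degenerate case $P'(\rho_0)=0$ observe that $\fK_{\cS(j)}$ must vanish at $x_0$ for every $j$ with $(u_0)_j>0$, which lets you transfer the computation to the strictly smaller graph $\cS(j)$ and invoke an inductive hypothesis to get continuity of $\rho_{\cS(j)}$, bounding $\rho(u')$ from above by $\rho_{\cS(j)}(v_j'/|v_j'|)/|v_j'|$. This is correct and the identification $\rho_0 = \rho_{\cS(j)}(v_j/|v_j|)/|v_j|$ via part (1) applied to $\cS(j)$ is the right move. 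The trade-off is that you carry an induction on $|\cV|$ and need to verify the small details it entails ($v_j\neq 0$, the base case, nonvanishing of $v'_j$ near $u_0$, all of which you do), whereas the paper's minimality trick avoids induction and gives a bit more --- a smooth local majorant $\widehat\rho$, which also feeds into the later smoothness discussion of $\partial R(\cG)$ in Proposition \ref{prop: irreducible smoothness}. Both arguments ultimately reduce the degenerate situation to a strictly smaller subgraph (the paper to the minimal $\cG'$, you to $\cS(j)$), so the underlying combinatorial mechanism is the same; yours is just packaged as a recursion rather than a one-shot minimality selection.

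One very minor stylistic point: in part (4) you should note explicitly that each coordinate of $y = x/\rho(x/|x|)$ lies in $[0,1)$ (which follows from $y_v = |x|u_v/\rho(u) < u_v \le 1$), so the inverse map really does land in the prescribed domain. This is implicit in what you wrote but worth making visible.
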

				
				\begin{proof}
					(1) Let $\widehat{\rho}(u) = \inf \{r \in [0,\infty): \fK_{\cG}(ru) = 0 \}$.  By construction, $\rho(u) \leq \widehat{\rho}(u)$.  For the opposite inequality, note that $\fK_{\cG}(ru) > 0$ for $r < \widehat{\rho}(u)$, and therefore by \ref{lem: convergence criteria} (5) $\implies$ (3), we have $\fK_{\cG}(y) > 0$ for every $y \in \prod_{v \in \cV} \{0, ru_v\}$ and $r < \widehat{\rho}(u)$.  As we observed in the proof of Theorem \ref{thm: classification of finite-dimensional summands}, replacing some of the variables with zero is equivalent to evaluating $\fK_{\cG'}$ for an induced subgraph $\cG'$.  Hence, we have $\fK_{\cG'}(ru|_{\cG'}) > 0$ for $r < \widehat{\rho}(u)$ and for all induced subgraphs $\cG'$.  Therefore, we also have $\rho(u) > r$ whenever $\widehat{\rho}(u) > r$, and thus $\widehat{\rho}(u) \leq \rho(u)$.
					
					It remains to show that infimum is actually a minimum.  Looking at subgraphs with only one vertex, we see that if the $v$th coordinate of $ru$ is equal to $1$, then $\fK_{\set v}(ru_v) = 1 - ru_v = 0$.  Thus, $\rho(u)u$ must be in $[0,1]^{\cV}$ and in particular $\rho(u) \leq \sqrt{|\cV|}$.  Thus, $\{r \in [0,\infty): \fK_{\cG}(ru) = 0\}$ is a nonempty compact set and so the infimum is achieved.
					
					(2) This follows from part (1) of this lemma together with Lemma \ref{lem: convergence criteria} (3) $\iff$ (5).
					
					(3) First, we show $\rho$ is lower semi-continuous, or in other words for each $r^* > 0$ the set $\{u: \rho(u) > r^* \}$ is open.  Let $u^*$ be in this set.  Then the function $\gamma(r,u) = \min_{\cG'} \fK_{\cG'}(ru|_{\cG'})$ is strictly positive on $[0,r^*] \times \{u^*\}$.  By uniform continuity of $\gamma$ on compact sets, there is neighborhood of $O$ of $u^*$ such that $\gamma > 0$ on $[0,r^*] \times O$.  Then for $u \in O$, we have $\rho(u) > r^*$, as desired.
					
					Now we need to show that $\{u: \rho(u) < r^*\}$ is open for each $r^*$. Fix $u^*$ in this set.  Then $\fK_{\cG'}(\rho(u^*)u^*|_{\cG'}) = 0$ for some induced subgraph $\cG' \subseteq \cG$.  Choose such a $\cG'$ which is minimal, and consider differentiating $\fK_{\cG'}$ with respect to $r$.  By the chain rule,
					\[
					\frac{d}{dr} \Bigr|_{r = \rho(u^*)} \fK_{\cG'}(r u^*|_{\cG'}) = \sum_{j \in \cV'} u_j^* [\partial_j \fK_{\cG'}](\rho(u^*)u^*|_{\cG'}).
					\]
					For $j \in \cV'$, Lemma \ref{lem: derivative computation} shows that
					\[
					\partial_j \fK_{\cG'}(x|_{\cG'}) = -\fK_{\cG' \cap \cS(j)}(x|_{\cG' \cap \cS(j)}),
					\]
					which is nonzero at $\rho(u^*) u^*$ by minimality of $\cG'$ (here note that $\cG' \cap \cS(j)$ does not contain $j$).  It is in fact strictly negative; indeed, since $\rho(u^*) u^*$ is in the closure of $R(\cG)$, the term $\fK_{\cG' \cap \cS(j)}(\rho(u^*) u^*|_{\cG'\cap \cS(j)})$ cannot be negative.  We finally note that $u^*$ must have some nonzero coordinate in $\cV'$ since otherwise $\fK_{\cG'}(\rho(u^*)u^*|_{\cG'})$ would equal $1$.  Therefore, overall
					\[
					\frac{d}{dr} \Bigr|_{r = \rho(u^*)} \fK_{\cG'}(r u^*|_{\cG'}) = -\sum_{j \in \cV'} u_j^* \fK_{\cG' \cap \cS(j)}(\rho(u^*)u^*|_{\cG' \cap \cS(j)}) < 0,
					\]
					since we have a sum of nonpositive terms at least one of which is strictly negative.  Therefore, by the implicit function theorem, there exists a neighborhood $O$ of $u^*$ in the unit sphere and a smooth function $\widehat{\rho}: O \to (0,\infty)$ such that
					\[
					\fK_{\cG'}(\widehat{\rho}(u) u|_{\cG'}) = 0
					\]
					and $\widehat{\rho}(u^*) = \rho(u^*)$.  For $u \in [0,\infty)^{\cV} \cap O$, we have $\rho(u) \leq \widehat{\rho}(u)$ by definition of $\rho$, and hence
					\[
					\{u: \rho(u) < r^*\} \supseteq \{u \in O \cap [0,\infty)^{\cV}: \widehat{\rho}(u) < r^* \}.
					\]
					The right-hand side is open by continuity of $\widehat{\rho}$ from the implicit function theorem and contains $u^*$ since $\widehat{\rho}(u^*) = \rho(u^*) < r^*$.  Thus, $\{u: \rho(u) < r^*\}$ contains an open neighborhood of $u^*$.  Since $u^*$ was arbitrary $\{u: \rho(u) < r^*\}$ is open.
					
					(4) This follows readily from (2) and (3) by basic topology.
				\end{proof}

				\begin{proposition} \label{prop: component}
					The region $R(\cG)$ is the connected component of $\{x \in [0,1]^{\cV}: \fK_{\cG}(x) > 0 \}$ containing $0$.
				\end{proposition}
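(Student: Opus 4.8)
The plan is to identify $R(\cG)$ with the component by a soft clopen argument, using the radial description from Lemma~\ref{lem: homeomorphism}. First I would record two easy facts. On one hand, $R(\cG)$ is relatively open in $[0,1]^{\cV}$: by \eqref{eq: region of G} it is the intersection over the finitely many induced subgraphs $\cG'$ of the sets $\{x \in [0,1]^{\cV}: \fK_{\cG'}(x|_{\cG'}) > 0\}$, each of which is the preimage of $(0,\infty)$ under a continuous (polynomial) map. On the other hand, $R(\cG)$ is connected and contains $0$: Lemma~\ref{lem: homeomorphism}(4) exhibits it as the homeomorphic image of the convex (hence connected) set $\{y \in [0,1]^{\cV}: |y| < 1\}$, and $0 \in R(\cG)$ since $\fK_{\cG'}(0) = 1 > 0$ for every $\cG'$. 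Taking $\cG' = \cG$ in \eqref{eq: region of G} shows $R(\cG) \subseteq \{x \in [0,1]^{\cV}: \fK_{\cG}(x) > 0\}$, so $R(\cG)$ is contained in the connected component $C$ of this set that contains $0$.

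For the reverse inclusion I would prove that $R(\cG)$ is also relatively closed in $C$; since $R(\cG)$ is then a nonempty, relatively open and relatively closed subset of the connected set $C$, it follows that $R(\cG) = C$. To check closedness, let $x$ lie in the closure of $R(\cG)$ and in $C$; then $\fK_{\cG}(x) > 0$ because $x \in C$. If $x = 0$ then $x \in R(\cG)$ trivially, so assume $x \neq 0$ and pick $x_n \in R(\cG)$ with $x_n \to x$. For $n$ large $x_n \neq 0$ and $x_n/|x_n| \to x/|x|$ in the nonnegative orthant of the unit sphere. By Lemma~\ref{lem: homeomorphism}(2), $|x_n| < \rho(x_n/|x_n|)$; letting $n \to \infty$ and invoking the continuity of $\rho$ from Lemma~\ref{lem: homeomorphism}(3) gives $|x| \leq \rho(x/|x|)$. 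If equality held, then Lemma~\ref{lem: homeomorphism}(1) would yield $\fK_{\cG}(x) = \fK_{\cG}\!\left(\rho(x/|x|)\, x/|x|\right) = 0$, contradicting $x \in C$. Hence $|x| < \rho(x/|x|)$, and Lemma~\ref{lem: homeomorphism}(2) gives $x \in R(\cG)$, completing the argument.

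The only genuinely delicate point is this last step: a limit of points satisfying the strict inequality $|x_n| < \rho(x_n/|x_n|)$ would, a priori, only be guaranteed to satisfy the non-strict version, and strictness is recovered precisely because membership in $C$ forbids $x$ from lying on the zero set of $\fK_{\cG}$, which by Lemma~\ref{lem: homeomorphism}(1) is exactly the radial sphere $|x| = \rho(x/|x|)$. Everything else is formal point-set topology — finite intersections of relatively open sets are relatively open, and a nonempty clopen subset of a connected space is the whole space — so I do not expect any further obstacle.
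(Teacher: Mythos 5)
Your proof is correct and follows essentially the same approach as the paper: both directions hinge on the radial description in Lemma~\ref{lem: homeomorphism} (connectedness from part (4) for the forward inclusion, and continuity of $\rho$ together with part (1) identifying the radial boundary with the zero set of $\fK_{\cG}$ for the reverse). The only cosmetic difference is that you phrase the reverse inclusion as a clopen argument ($R(\cG)$ is nonempty, relatively open and relatively closed in the component $C$), whereas the paper splits $\{x \neq 0\}$ into the disjoint open sets $\{|x| < \rho(x/|x|)\}$ and $\{|x| > \rho(x/|x|)\}$ and notes the component of $0$ must lie in the former; these are the same connectedness maneuver.
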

				
				\begin{proof}
					Let $O = \{x \in [0,1]^{\cV}: \fK_{\cG}(x) > 0\}$.  Then $R(\cG) \subseteq O$ by definition.  Lemma \ref{lem: homeomorphism} (4) implies that $R(\cG)$ is connected, and so $R(\cG)$ is contained in the connected component of $O$ that contains $0$.
					
					Conversely, consider the open sets
					\[
					U = \{x \in [0,1]^{\cV}: x \neq 0, |x| < \rho(x/|x|) \}, \qquad \cV = \{x \in [0,1]^{\cV}: x \neq 0, |x| > \rho(x/|x|) \}.
					\]
					By Lemma \ref{lem: homeomorphism} (3), $U$ and $V$ are disjoint open sets.  By Lemma \ref{lem: homeomorphism} (1), if $|x| = \rho(x/|x|)$, then $\fK_{\cG}(x) = 0$.  Therefore, $O \setminus \set 0 \subseteq U \cup V$.  Also, $U$ contains a punctured neighborhood of the origin since $\fK_{\cG}(0) = 1 > 0$.  Hence, the connected component of $0$ in $O$ is contained in $U \cup \set 0$.  By Lemma \ref{lem: homeomorphism} (2), $U = R(\cG) \setminus \set 0$.
				\end{proof}
				
				Next, we consider whether the boundary of $R(\cG)$ is smooth.  For this, we need to analyze more carefully when the derivatives of $\fK_{\cG}$ can vanish.

				\begin{lemma} \label{lem:joincomponent}
					Let $\cG$ be a graph and $x \in (0,\infty)^{\cV}$ with
					\[
					\min_{\cG' \subseteq \cG} \fK_{\cG'}(x|_{\cG'}) = 0,
					\]
					where $\cG'$ runs over all induced subgraphs of $\cG$.  Let $\cG_0 = (\cV_0,\cE_0)$ be a minimal induced subgraph of $\cG$ such that $\fK_{\cG_0}(x|_{\cG_0}) = 0$.  Then $\cG$ decomposes as a graph join of $\cG_0$ with the complementary subgraph $\cG_1 = (\cV_1,\cE_1)$.
				\end{lemma}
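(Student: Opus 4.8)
The plan is to show that every vertex of $\cV_1 := \cV \setminus \cV_0$ is adjacent in $\cG$ to every vertex of $\cV_0$. Since $\cG_0$ and the induced subgraph $\cG_1$ on $\cV_1$ are both induced subgraphs of $\cG$, this adjacency condition is exactly the statement $\cG = \cG_0 + \cG_1$, so it suffices to establish it. The main tool will be the deletion identity
\[
\fK_{\cH}(x) = \fK_{\cH \setminus v}(x) - x_v \, \fK_{\cS_{\cH}(v)}(x),
\]
valid for any graph $\cH$ and vertex $v$, where $\cH \setminus v$ is the vertex-deleted graph and $\cS_{\cH}(v)$ is the subgraph of $\cH$ induced by the neighbours of $v$. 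This is immediate either by splitting the clique sum defining $\fK_{\cH}$ according to whether a clique contains $v$, or from Lemma~\ref{lem: derivative computation} together with the fact that $\fK_{\cH}$ is affine in $x_v$. I would also record at the outset the following consequence of minimality: if $\cG''$ is a \emph{proper} induced subgraph of $\cG_0$, then $\fK_{\cG''}(x) \neq 0$ by minimality of $\cG_0$, while $\fK_{\cG''}(x) \geq 0$ since $\cG''$ is also an induced subgraph of $\cG$ and $\min_{\cG' \subseteq \cG}\fK_{\cG'}(x) = 0$; hence $\fK_{\cG''}(x) > 0$.

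Next I would argue by contradiction. Suppose some $v_1 \in \cV_1$ is not adjacent to some $v_0 \in \cV_0$. Let $\cG_0^+$ be the subgraph of $\cG$ induced by $\cV_0 \cup \{v_1\}$, and apply the deletion identity with $\cH = \cG_0^+$ and $v = v_1$. Since $\cG_0^+ \setminus v_1 = \cG_0$ and $\fK_{\cG_0}(x) = 0$ by the choice of $\cG_0$, this gives $\fK_{\cG_0^+}(x) = -x_{v_1}\,\fK_{\cS_{\cG_0^+}(v_1)}(x)$. But $\cG_0^+$ is an induced subgraph of $\cG$, so $\fK_{\cG_0^+}(x) \geq 0$, and $x_{v_1} > 0$ by the hypothesis $x \in (0,\infty)^{\cV}$; therefore $\fK_{\cS_{\cG_0^+}(v_1)}(x) \leq 0$.

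Finally, $\cS_{\cG_0^+}(v_1)$ is the subgraph induced by the neighbours of $v_1$, all of which lie in $\cV_0$ (the graph is simple), and since $v_0$ is not among them, $\cS_{\cG_0^+}(v_1)$ is a proper induced subgraph of $\cG_0$; by the opening remark $\fK_{\cS_{\cG_0^+}(v_1)}(x) > 0$, a contradiction. This forces every vertex of $\cV_1$ to be adjacent to every vertex of $\cV_0$, which is the desired join decomposition. I do not anticipate a serious obstacle here; the two points that need a little care are choosing the auxiliary graph to be $\cG_0$ together with exactly the one extra vertex $v_1$ (so that deletion returns $\cG_0$ and the neighbour-star of $v_1$ still omits $v_0$), and noticing that what is needed is strict positivity of $\fK$ on proper induced subgraphs of $\cG_0$---not merely non-vanishing---which follows by combining minimality of $\cG_0$ with the global sign hypothesis $\min_{\cG'}\fK_{\cG'}(x) = 0$.
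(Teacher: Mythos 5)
Your proposal is correct and takes essentially the same approach as the paper: both rest on the deletion identity $\fK_{\cG_0\cup\{w\}}(x) = \fK_{\cG_0}(x) - x_w\,\fK_{\cS(w)\cap\cG_0}(x)$ together with the sign constraints imposed by $\min_{\cG'}\fK_{\cG'}(x)=0$ and minimality of $\cG_0$. The only difference is presentational—you phrase it as a contradiction (assume a missing adjacency, derive $\fK\leq 0$ on a proper subgraph of $\cG_0$, contradicting your opening strict-positivity remark) whereas the paper argues directly that $\fK_{\cS(w)\cap\cG_0}(x)=0$ and invokes minimality to conclude $\cS(w)\cap\cG_0=\cG_0$.
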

				
				\begin{proof}
					It suffices to show that every vertex $w \in \cG_1$ is adjacent to every vertex in $\cG_0$.  Let $w \in \cV_1$, and write $\cG_w$ for the subgraph induced by $\cV_0 \cup \set w$.  Observe that
					\[
					\fK_{\cG_w}(x|_{\cG_w}) = \fK_{\cG_0}(x|_{\cG_0}) - x_w \fK_{\cS(w) \cap \cG_0}(x|_{\cS(w) \cap \cG_0});
					\]
					indeed, every clique $\cK$ in $\cG_w$ either (a) does not contain $w$ and hence appears in $\fK_{\cG_0}(x|_{\cG_0})$ or (b) contains $w$ and so is $\cK' \cup \set w$ for some clique $\cK'$ in $\cS(w) \cap \cG_0$ and hence appears in $- x_w \fK_{\cS(w) \cap \cG_0}(x|_{\cS(w) \cap \cG_0})$ (as in Lemma \ref{lem: derivative computation}).  By assumption,
					\[
					\fK_{\cG_w}(x|_{\cG_w}) \geq 0, \qquad \fK_{\cG_0}(x|_{\cG_0}) = 0, \qquad -x_w \fK_{\cS(w) \cap \cG_0}(x|_{\cS(w) \cap \cG_0}) \leq 0.
					\]
					Therefore, the above equality forces that
					\[
					\fK_{\cG_w}(x|_{\cG_w}) = 0 = x_w \fK_{\cS(w) \cap \cG_0}(x|_{\cS(w) \cap \cG_0}).
					\]
					Since $x_w > 0$, we have $\fK_{\cS(w) \cap \cG_0}(x|_{\cS(w) \cap \cG_0}) = 0$.  By minimality of $\cG_0$, we have $\cS(w) \cap \cG_0 = \cG_0$, or $\cG_0 \subseteq \cS(w)$ which means that $w$ is adjacent to every vertex in $\cG_0$ as desired.
				\end{proof}
				
				\begin{proposition} \label{prop: irreducible smoothness}
					Let $\cG$ be a graph which is join-irreducible.  Then the derivatives of $\fK_{\cG}$ are nonvanishing on $\partial R(\cG) \cap (0,\infty)^{\cV}$, and hence this is a smooth submanifold of $(0,\infty)^{\cV}$.
				\end{proposition}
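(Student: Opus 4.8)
The plan is to prove the stronger statement that at every point $x\in\partial R(\cG)\cap(0,\infty)^{\cV}$, \emph{every} partial derivative $\partial_j\fK_{\cG}(x)$ is strictly negative; in particular $\fK_{\cG}$ has no critical point there, and the submanifold assertion then follows from the regular value theorem together with the descriptions of $R(\cG)$ in Lemma~\ref{lem: homeomorphism} and Proposition~\ref{prop: component}. First I would record what boundary points look like. Writing $u=x/|x|$, Lemma~\ref{lem: homeomorphism}(2) identifies $\partial R(\cG)$ inside the open orthant with $\{x:|x|=\rho(u)\}$, and part~(1) gives $\fK_{\cG}(x)=\fK_{\cG}(\rho(u)u)=0$. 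Moreover $\min_{\cG'\subseteq\cG}\fK_{\cG'}(ru)>0$ for every $r<\rho(u)$ (by the definition of $\rho$ and the intermediate value theorem, starting from $\fK_{\cG'}(0)=1$), so by continuity $\fK_{\cG'}(x|_{\cG'})\ge 0$ for every induced subgraph $\cG'\subseteq\cG$, and hence $\min_{\cG'\subseteq\cG}\fK_{\cG'}(x)=0$.

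Next I would feed this into Lemma~\ref{lem:joincomponent}, which yields a minimal induced subgraph $\cG_0$ with $\fK_{\cG_0}(x|_{\cG_0})=0$ together with a join decomposition $\cG=\cG_0+\cG_1$. Since $\fK_{\varnothing}=1\ne 0$ we have $\cG_0\ne\varnothing$, so join-irreducibility of $\cG$ forces $\cG_1=\varnothing$, i.e.\ $\cG_0=\cG$. Thus $\cG$ is \emph{the} only induced subgraph vanishing at $x$, so the inequalities above upgrade to $\fK_{\cG'}(x|_{\cG'})>0$ for every \emph{proper} induced subgraph $\cG'\subsetneq\cG$ (any such $\cG'$ with $\fK_{\cG'}(x|_{\cG'})=0$ would contain a proper minimal vanishing subgraph). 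Now Lemma~\ref{lem: derivative computation} gives, for each vertex $j$, $\partial_j\fK_{\cG}(x)=-\fK_{\cS(j)}(x|_{\cS(j)})$, where $\cS(j)$ is the subgraph induced by the neighbours of $j$. Since $\cG$ is simple, $j\notin\cS(j)$, so $\cS(j)$ is a proper induced subgraph and $\fK_{\cS(j)}(x|_{\cS(j)})>0$; hence $\partial_j\fK_{\cG}(x)<0$. This holds for all $j$, so $\nabla\fK_{\cG}(x)\ne 0$.

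For the submanifold assertion I would argue locally. Fix $x\in\partial R(\cG)\cap(0,\infty)^{\cV}$; if some coordinate of $x$ equalled $1$ then $\fK_{\{v\}}(x_v)=0$ would force $\cG=\{v\}$ by the uniqueness just established, a single-vertex case in which the claim is immediate, so assume $x\in(0,1)^{\cV}$ and pick a neighbourhood $U\subseteq(0,1)^{\cV}\subseteq[0,1]^{\cV}$ of $x$. By the previous paragraph, $0$ is a regular value of $\fK_{\cG}$ near $x$, so after shrinking $U$ the set $\fK_{\cG}^{-1}(0)\cap U$ is a smooth hypersurface separating $U$ into the two connected open pieces $U^{\pm}=U\cap\{\pm\fK_{\cG}>0\}$. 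Since $R(\cG)$ is the connected component of $\{x\in[0,1]^{\cV}:\fK_{\cG}(x)>0\}$ containing $0$ (Proposition~\ref{prop: component}), $x\in\overline{R(\cG)}$, and $U^{+}$ is connected, contained in $\{\fK_{\cG}>0\}$, and meets $R(\cG)$, we conclude $U^{+}\subseteq R(\cG)$ while $U^{-}\cap R(\cG)=\varnothing$. Hence $R(\cG)\cap U=U^{+}$, so $\partial R(\cG)\cap U=\fK_{\cG}^{-1}(0)\cap U$ is a smooth hypersurface, as required.

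I expect the crux to be the second paragraph: recognising that join-irreducibility is precisely the hypothesis ruling out the only way $\partial_j\fK_{\cG}(x)$ could vanish on the boundary — namely through a neighbour-subgraph $\cS(j)$ that already attains $\fK_{\cS(j)}=0$ at $x$ — which is exactly what Lemma~\ref{lem:joincomponent} obstructs once one knows $\cG$ itself is the minimal vanishing subgraph. The remaining ingredients (extracting the boundary data from Lemma~\ref{lem: homeomorphism}, the derivative formula, and the local topology in the last step) are routine given the lemmas already established.
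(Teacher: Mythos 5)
Your argument is correct and follows essentially the same route as the paper: use Lemma~\ref{lem: homeomorphism} to see $\fK_{\cG}(x)=0$ at a boundary point, apply Lemma~\ref{lem:joincomponent} so that join-irreducibility forces the minimal vanishing induced subgraph to be all of $\cG$, conclude via Lemma~\ref{lem: derivative computation} that every $\partial_j\fK_{\cG}(x)$ is nonzero, and finish with the implicit function theorem. You simply supply more detail than the paper does (verifying the hypothesis $\min_{\cG'}\fK_{\cG'}(x)=0$, the single-vertex edge case, and the local identification of $\partial R(\cG)$ with $\fK_{\cG}^{-1}(0)$), all of which is consistent with the paper's intended proof.
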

				
				\begin{proof}
					Let $x \in \partial R(\cG) \cap (0,\infty)^{\cV}$.  In light of Lemma \ref{lem: homeomorphism}, we must have $\fK_{\cG}(x) = 0$.  If $\cG_0$ is a minimal subgraph where $\fK_{\cG_0}$ vanishes, then $\cG$ decomposes as a join of $\cG_0$ and its complement, which forces $\cG_0 = \cG$ since $\cG$ is irreducible.  Therefore, $\fK_{\cG'}$ is non-vanishing for any strictly smaller subgraph.  In particular, by Lemma \ref{lem: derivative computation}, the derivatives of $\fK_{\cG}$ are non-vanishing.  Hence, the implicit function theorem implies smoothness of $\partial R(\cG) \cap (0,\infty)^{\cV}$.
				\end{proof}
				
				In case $\cG$ is not join-irreducible, we can analyze the graph using the join decomposition.
				
				\begin{lemma} \label{lem: join decomposition}
					Let $\cG$ be a graph with a join decomposition $\cG = \cG_1 + \dots + \cG_k$.  Then
					\[
					\fK_{\cG}(x) = \prod_{j=1}^k \fK_{\cG_j}(x|_{\cG_j}),
					\]
					and
					\[
					R(\cG) = R(\cG_1) \times \dots \times R(\cG_k).
					\]
				\end{lemma}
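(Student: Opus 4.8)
The plan is to first prove the factorization of $\fK_\cG$, and then read off the description of $R(\cG)$ from it. The crucial combinatorial observation is that for a graph join $\cG = \cG_1 + \dots + \cG_k$ with vertex set $\cV = \cV_1 \sqcup \dots \sqcup \cV_k$, the cliques of $\cG$ are in bijection with the tuples $(\cK_1,\dots,\cK_k)$ in which $\cK_j$ is a clique of $\cG_j$: given a clique $\cK$ of $\cG$, set $\cK_j = \cK \cap \cV_j$, which is a clique of $\cG_j$ because adjacency within $\cV_j$ in $\cG$ agrees with adjacency in $\cG_j$; conversely, the union $\bigcup_j \cK_j$ of cliques $\cK_j$ of $\cG_j$ is a clique of $\cG$, since two vertices in the same part are adjacent via the corresponding $\cK_j$ and two vertices in different parts are adjacent by definition of the join. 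Under this bijection $|\cK| = \sum_j |\cK_j|$ and $\prod_{v \in \cK} x_v = \prod_{j=1}^k \prod_{v \in \cK_j} x_v$, so expanding the product of sums gives
\[
\fK_\cG(x) = \sum_{\substack{\cK \subseteq \cG \\ \text{clique}}} (-1)^{|\cK|} \prod_{v \in \cK} x_v = \prod_{j=1}^k \Biggl( \sum_{\substack{\cK_j \subseteq \cG_j \\ \text{clique}}} (-1)^{|\cK_j|} \prod_{v \in \cK_j} x_v \Biggr) = \prod_{j=1}^k \fK_{\cG_j}(x|_{\cG_j}),
\]
which is the first assertion.

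Next I would record the companion fact that induced subgraphs of a join are again joins of induced subgraphs: if $\cV' \subseteq \cV$ and $\cG'$ is the subgraph of $\cG$ induced by $\cV'$, then, writing $\cV_j' = \cV' \cap \cV_j$ and letting $\cG_j'$ be the subgraph of $\cG_j$ induced by $\cV_j'$, one checks directly from the definitions that $\cG' = \cG_1' + \dots + \cG_k'$, where some of the $\cG_j'$ are allowed to be empty; and conversely every tuple of induced subgraphs $\cG_j' \subseteq \cG_j$ arises in this way. Applying the factorization already proved to $\cG'$ then gives $\fK_{\cG'}(x|_{\cG'}) = \prod_{j=1}^k \fK_{\cG_j'}(x|_{\cG_j'})$.

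With this in hand, the identity $R(\cG) = R(\cG_1) \times \dots \times R(\cG_k)$ follows by unwinding definitions under the identification $[0,1]^{\cV} \cong \prod_{j=1}^k [0,1]^{\cV_j}$. If $x|_{\cG_j} \in R(\cG_j)$ for every $j$, then for any induced $\cG' = \cG_1' + \dots + \cG_k' \subseteq \cG$ each factor $\fK_{\cG_j'}(x|_{\cG_j'})$ is strictly positive, hence so is the product $\fK_{\cG'}(x|_{\cG'})$, and therefore $x \in R(\cG)$. Conversely, if $x \in R(\cG)$, fix an index $j_0$ and an induced subgraph $\cG_{j_0}' \subseteq \cG_{j_0}$; taking $\cG_j' = \varnothing$ for $j \neq j_0$ exhibits $\cG_{j_0}'$ as an induced subgraph of $\cG$ (the join of the empty graphs with $\cG_{j_0}'$ is just $\cG_{j_0}'$), and since $\fK_\varnothing = 1$ the factorization gives $\fK_{\cG_{j_0}'}(x|_{\cG_{j_0}'}) = \fK_{\cG'}(x|_{\cG'}) > 0$; as $\cG_{j_0}'$ was arbitrary, $x|_{\cG_{j_0}} \in R(\cG_{j_0})$. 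No step here is more than bookkeeping; the only point that needs care is keeping the bijection between cliques, the bijection between induced subgraphs, and the resulting factorizations mutually consistent, and the single genuinely useful trick is setting all but one join-factor to the empty graph to recover the individual positivity conditions from the product one.
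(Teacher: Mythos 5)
Your proof is correct. The factorization $\fK_{\cG} = \prod_j \fK_{\cG_j}$ via the bijection between cliques of the join and tuples of cliques in the factors is exactly the paper's argument. For the identity $R(\cG) = \prod_j R(\cG_j)$, however, you take a different (and more self-contained) route: the paper dispatches it in one line by citing Lemma \ref{lem: homeomorphism}(2), i.e.\ the characterization of $R(\cG)$ via the radial function $\rho$, which itself rests on the equivalence (4) $\iff$ (5) of Lemma \ref{lem: convergence criteria}. You instead observe that induced subgraphs of a join are precisely joins of induced subgraphs (with empty factors permitted), apply the factorization at the level of each induced subgraph, and then recover the per-factor positivity conditions by zeroing out all but one join-component. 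This works directly from the definition \eqref{eq: region of G} of $R(\cG)$ and needs none of the earlier analytic machinery; the trick of setting $\cG_j' = \varnothing$ for $j \neq j_0$, using $\fK_\varnothing = 1$, is the key step that the paper's terser argument hides inside the cited lemma. Both proofs are valid; yours trades a shorter citation for a transparent, purely combinatorial verification.
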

				
				\begin{proof}
					If $\cK$ is a clique in $\cG$, then $\cK \cap \cV_j$ is a clique in $\cG_j$ for each $j$.  Conversely, if $\cK_j$ is a clique in $\cV_j$, then by construction of the graph join, $\cK = \cK_1 \cup \dots \cup \cK_k$ is a clique in $\cG$.  Moreover,
					\[
					(-1)^{|\cK|} \prod_{v \in \cK} x_v = \prod_{j=1}^k \left[(-1)^{|\cK_j|} \prod_{v \in \cK_j} x_v \right].
					\]
					Hence, the sum over all cliques in $\cG$ decomposes as the product of sums associated to each $\cG_j$ which yields the asserted identity for the polynomials.  The identity for the regions then follows from Lemma \ref{lem: homeomorphism} (1) and (2).
				\end{proof}
				
				From this we can see that $R(\cG) \cap (0,\infty)^{\cV}$ is the product of open sets with smooth boundary, and the only way that $\partial R(\cG)$ can be non-smooth at a point $x \in (0,\infty)^{\cV}$ is if there are two separate indices $i$ and $j$ such that $x|_{\cG_i}$ and $x|_{\cG_j}$ are both boundary points of $R(\cG_i)$ and $R(\cG_j)$ respectively.  Meanwhile, if some of the coordinates of $x$ are allowed to be zero, then more complicated behavior can occur.  Thus, in order to check whether the derivatives $\partial_j \fK_{\cG}(x)$ vanishes, we have to examine each vertex of $\cG$ in relation with the subgraph where the values of $x_v$ are positive.
				
				\begin{lemma} \label{lem:irrelevantzero}
					Let $\cG$ be a graph and $x \in [0,\infty)^{\cV}$. 
					Let $\cG^+$ be the subgraph induced by vertices $v$ where $x_v > 0$.  Moreover, for $j \in \cV$, let $\cG_j^+$ be the subgraph induced by $j$ together with $\cG^+$.  For each $j \in \cV$, we have
					\[
					\fK_{\cG_+}(x|_{\cG_+}) = \fK_{\cG_j^+}(x|_{\cG_j^+}) =  \fK_{\cG}(x)
					\]
					and
					\[
					\partial_j \fK_{\cG}(x) = \partial_j \fK_{\cG_j^+}(x|_{\cG_j^+}).
					\]
				\end{lemma}
				
				\begin{proof}
					For the first claim, since $x_v = 0$ for $v$ not in $\cG_j^+$, all the terms in $\fK_{\cG}(x)$ from cliques that are not contained in $\cG_j^+$ vanish, leaving exactly the terms in $\fK_{\cG^+}(x|_{\cG_+})$.  Thus, $\fK_{\cG}(x) = \fK_{\cG^+}(x|_{\cG_+})$.  The same reasoning applies with $\cG_j^+$ in place of $\cG^+$.
					
					For the second claim, use Lemma \ref{lem: derivative computation} together with the first claim:
					\begin{equation} \label{eq: derivative reduction}
						\partial_j \fK_{\cG}(x) = -\fK_{\cS(j)}(x|_{\cS(j)}) = -\fK_{\cS(j) \cap \cG_j^+}(x|_{\cS(j) \cap \cG_j^+}) = \partial_j \fK_{\cG_j^+}(x|_{\cG_j^+}).  \qedhere
					\end{equation}
				\end{proof}
				
				\begin{proposition} \label{prop: final derivative vanishing}
					Let $\cG$ be a graph and $x \in [0, \infty)^{\cV} \cap \partial R(\cG)$.  (Here the boundary is considered relative to $[0,\infty)^{\cV}$ rather than $\R^{\cV}$).  Let $\cG^+$ and $\cG_j^+$ be as in Lemma \ref{lem:irrelevantzero}.  For each $j \in \cV$, the following are equivalent:
					\begin{enumerate}[(1)]
						\item $\partial_j \fK_{\cG}(x) = 0$.
						\item $\cG_j^+$ decomposes as a non-trivial graph join $\cG_j^+ = \mathcal{A} + \mathcal{B}$ with $j \in \mathcal{A}$ and $\fK_{\mathcal{B}}(x|_{\mathcal{B}}) = 0$.
					\end{enumerate}
					In particular, if $\cG_j^+$ is irreducible, then $\partial_j \fK_{\cG}(x)$ is nonzero.
				\end{proposition}
				
				\begin{proof}
					(1) $\implies$ (2).  By \eqref{eq: derivative reduction}, we have $0 = \partial_j \fK_{\cG}(x) = -\fK_{\cS(j) \cap \cG_j^+}(x|_{\cS(j) \cap \cG_j^+})$.  Note that $\cS(j) \cap \cG_j^+ = \cS(j) \cap \cG^+ \subseteq \cG^+$.  Therefore, there exists a minimal induced subgraph $\mathcal{B} \subseteq \cS(j) \cap \cG^+$ such that $\fK_{\cB}(x|_{\cB}) = 0$.  By applying Lemma \ref{lem:joincomponent} to $\cG^+$, we see that $\mathcal{G}^+$ decomposes as the join of $\mathcal{B}$ with another subgraph $\mathcal{A}_0$.  By construction, $j$ is adjacent to all the vertices in $\mathcal{B}$; thus, letting $\mathcal{A}$ the induced subgraph obtained by adding $j$ into $\mathcal{A}_0$, we have that $\cG_j^+ = \mathcal{A} + \mathcal{B}$.
					Since $\fK_{\mathcal{B}}(x|_{\cB}) = 0$ and $\fK_{\varnothing}(\cdot) = 1$, we must have $\cB \neq \varnothing$ so the join decomposition is non-trivial.
					
					(2) $\implies$ (1).  By Lemma \ref{lem:irrelevantzero}, we have $\partial_j \cK_{\mathcal{G}}(x) = \fK_{\mathcal{G}_j^+}(x|_{\mathcal{G}_j^+})$ at the given point $x$.  By Lemma \ref{lem: join decomposition}, we have
					\[
					\fK_{\mathcal{G}_j^+}(y|_{\mathcal{G}_j^+}) = \fK_{\mathcal{A}}(y|_{\mathcal{A}}) \fK_{\mathcal{B}}(y|_{\mathcal{B}}) \text{ for } y \in [0,\infty)^{\cV}.
					\]
					By assumption, $\fK_{\mathcal{B}}(x|_{\mathcal{B}}) = 0$, and, since $j \in \mathcal{A}$, the value of $\fK_{\mathcal{B}}(x|_{\mathcal{B}})$ remains unchanged if we perturb the $j$th coordinate of $x$.  Therefore, $\fK_{\mathcal{G}_j^+}(y|_{\mathcal{G}_j^+}) = 0$ whenever $y$ is a perturbation of $x$ in only the $j$th coordinate.  Hence, $\partial_j \fK_{\mathcal{G}_j^+}(x|_{\mathcal{G}_j^+}) = 0$ as desired.
					
					The ``in particular'' assertion is immediate from (1) $\implies$ (2).
				\end{proof}
				
				\begin{remark}
					Using Proposition \ref{prop: final derivative vanishing}, we can test when all the derivatives of $\fK_{\mathcal{G}}(x)$ vanish.  This happens if and only if, for every $j$, $\mathcal{G}_j^+$ has a decomposition as in Proposition \ref{prop: final derivative vanishing} (2).  In particular, this means that $\cG_j^+$ has at least two vertices, and so $\cG^+$ is nonempty.  Then by applying this condition to vertices $j \in \cG^+$, we conclude that $\cG^+$ must be reducible, and so $\cG^+ = \mathcal{C}_1 + \dots + \mathcal{C}_m$ for some $m \geq 2$.  Now, for each $j$, we have a decomposition $\mathcal{G}_j^+ = \mathcal{A}_j + \mathcal{B}_j$ given by condition (2).  Each of the irreducible components $\mathcal{C}_i$ from $\mathcal{G}^+$ must be contained in either $\mathcal{A}_j$ or $\mathcal{B}_j$ (with at least one being contained in $\mathcal{B}_j$) for each $j$.  This yields for each $j$, a partition of $\{\mathcal{C}_1,\dots,\mathcal{C}_m\}$ into two parts.  However, it could happen that different values of $j$ in $\mathcal{G} \setminus \mathcal{G}^+$ produce completely different partitions of $\{\mathcal{C}_1,\dots,\mathcal{C}_m\}$.  We also have no control over whether two vertices in $\mathcal{G} \setminus \mathcal{G}^+$ are adjacent.  Hence, $\mathcal{G}_j^+$ can decompose as a graph join as in the proposition, for every $j$, without $\cG$ itself necessarily decomposing as a graph join.
				\end{remark}
				
				\bibliographystyle{amsalpha}
				\bibliography{bib-graph-atoms}
				
			\end{document}